\setlist[enumerate]{leftmargin=.5in}
\setlist[itemize]{leftmargin=.5in}
\title{XT{\bfseries\scshape\MakeLowercase{race}}: Making the most of every sample \\ in stochastic trace estimation\thanks{This is a preprint of \emph{\XTrace: Making the most of every sample in stochastic trace estimation} (\url{https://doi.org/10.1137/23M1548323}), which appeared in the SIAM Journal on Matrix Analysis and Applications on January 3, 2024.
\funding{ENE acknowledges support from the  U.S. Department of Energy, Office of Science, Office of Advanced Scientific Computing Research,  Department of Energy Computational Science Graduate Fellowship under Award Number DE-SC0021110.
JAT and RJW acknowledge support from the Office of Naval Research through BRC Award N00014-18-1-2363
and from the National Science Foundation through FRG Award 1952777.}}}
\author{Ethan N. Epperly\thanks{Division of Computing and Mathematical Sciences, California Institute of Technology, Pasadena, CA 91125 USA (\email{eepperly@caltech.edu}, \email{jtropp@caltech.edu}, \email{rwebber@caltech.edu}).}
\and Joel A. Tropp\footnotemark[2] \and Robert J. Webber\footnotemark[2]}
\newcommand{\real}{\mathbb{R}}
\renewcommand{\left}{\mleft}
  \renewcommand{\right}{\mright}
\DeclareMathOperator{\tr}{tr}
\DeclareMathOperator{\diag}{diag}
\newcommand{\mat}[1]{\boldsymbol{#1}}
\renewcommand{\vec}[1]{\boldsymbol{#1}}
\newcommand{\lowrank}[1]{\left\llbracket #1 \right\rrbracket}
\newcommand{\norm}[1]{\left\| #1 \right\|}
\DeclareMathOperator{\rank}{rank}
\DeclareMathOperator{\orth}{orth}
\newcommand{\expmat}[1]{\begin{bmatrix} #1 \end{bmatrix}}
\newcommand{\twobytwo}[4]{\expmat{#1 & #2 \\ #3 & #4}}
\newcommand{\Id}{\mathbf{I}}
\newcommand{\evec}{\mathbf{e}}
\DeclareMathOperator{\Var}{Var}
\DeclareMathOperator{\Cov}{Cov}
\DeclareMathOperator{\Cor}{Cor}
\DeclareMathOperator{\expect}{\mathbb{E}}
\DeclareMathOperator{\Unif}{\textsc{Unif}}
\newcommand{\order}{\mathcal{O}}
\newcommand{\set}[1]{\mathsf{#1}}
\newcommand{\e}{\mathrm{e}}
\renewcommand{\hat}[1]{\widehat{#1}}
\definecolor{mygreen}{RGB}{28,172,0} 
\definecolor{mylilas}{RGB}{170,55,241}
\newcommand{\XTrace}{\textsc{XTrace}\xspace}
\newcommand{\NysTrace}{\textsc{XNysTrace}\xspace}
\newcommand{\XDiag}{\textsc{XDiag}\xspace}
\newcommand{\HutchPP}{\textsc{Hutch}\textnormal{\texttt{++}}\xspace}
\newcommand{\DiagPP}{\textsc{Diag}\textnormal{\texttt{++}}\xspace}
\newcommand{\Hutch}{\textsc{Hutch}\xspace}
\newcommand{\LRA}{\textsc{LRA}\xspace}
\newcommand{\nys}[2]{#1\mleft\langle #2 \mright\rangle}
\crefname{lstlisting}{Program}{Programs}
\Crefname{lstlisting}{Program}{Programs}
\newcommand{\err}{\mathrm{err}}
\renewcommand{\epsilon}{\varepsilon}
\newcommand{\NysPP}{\textsc{Nystr\"om}\texttt{++}\xspace}
\begin{document}
	\counterwithin{lstlisting}{section}
	
	\maketitle
	
	\begin{abstract}
		The implicit trace estimation problem asks for an approximation of
		the trace of a square matrix, accessed via matrix--vector products (matvecs).
		This paper designs new randomized algorithms, \textsc{XTrace}\xspace and \NysTrace,
		for the trace estimation problem by exploiting both variance reduction
		and the exchangeability principle.  For a fixed budget of matvecs,
		numerical experiments show that the new methods can achieve errors
		that are orders of magnitude smaller than existing algorithms,
		such as the Girard--Hutchinson estimator or the \HutchPP estimator.
		A theoretical analysis confirms the benefits by offering a precise
		description of the performance of these algorithms as a function of the spectrum of the input matrix.
		The paper also develops an exchangeable estimator, \XDiag, for approximating the diagonal
		of a square matrix using matvecs.
	\end{abstract}
	
	\begin{keywords}
		Trace estimation, low-rank approximation, exchangeability, variance reduction, randomized algorithm.
	\end{keywords}
	
	\begin{AMS}
		65C05, 65F30, 
		68W20
	\end{AMS}
	
	\section{Introduction} 
	
	Over the past three decades, researchers have developed \emph{randomized algorithms} for linear algebra problems such as trace estimation \cite{Gir89,Hut89,MMMW21}, low-rank approximation \cite{HMT11}, and over-determined least squares \cite{AMT10,RT08}.
	Many of these algorithms collect information by judicious random sampling of the problem data.  As a consequence, we can design better algorithms using techniques from the theory of statistical estimation, such as variance reduction and the exchangeability principle.  This paper explores how the exchangeability principle leads to faster randomized algorithms for trace estimation.
	
	Suppose that we wish to compute a quantity $Q(\mat{A})$ associated with a matrix $\mat{A}$.  A typical randomized algorithm might proceed as follows.
	\begin{enumerate}
		\item \textbf{Collect information} about the matrix $\mat{A}$ by computing matrix--vector products $\mat{A}\vec{\omega}_1,\ldots,\mat{A}\vec{\omega}_k$ with random test vectors $\vec{\omega}_1,\ldots,\vec{\omega}_k$.
		\item \textbf{Form an estimate} of $Q(\mat{A})$ from the samples $\mat{A}\vec{\omega}_1,\ldots,\mat{A}\vec{\omega}_k$.
	\end{enumerate}
	The question arises: \emph{Given the data $\mat{A}\vec{\omega}_1,\ldots,\mat{A}\vec{\omega}_k$, what is an optimal estimator for $Q(\mat{A})$?}
	One property an optimal estimator must obey is the exchangeability principle:
	
	\begin{center} \vspace{0.5pc}
		\fbox{ \begin{minipage}{0.9\textwidth}
				\textbf{Exchangeability principle:} If the test vectors $\vec{\omega}_1,\ldots,\vec{\omega}_k$ are exchangeable, the
				minimum-variance unbiased estimator for $Q(\mat{A})$ is always a symmetric function of $\vec{\omega}_1,\ldots,\vec{\omega}_k$.
			\end{minipage}
		} \vspace{0.5pc}
	\end{center}
	``Exchangeability'' means that the family $(\vec{\omega}_1,\ldots,\vec{\omega}_k)$ has the same distribution as 
	the permuted family $(\vec{\omega}_{\sigma(1)},\ldots,\vec{\omega}_{\sigma(k)})$ for every permutation $\sigma$ in the symmetric group $\set{S}_k$.  In particular, an independent and identically distributed (iid) family is exchangeable.
	
	The implication of the exchangeability principle is that our estimators should be symmetric functions of the samples, whenever possible.  This idea is attributed to Halmos~\cite{Hal46}, and it plays a central role in the theory of U-statistics~\cite{KB94}. 
	
	This paper will demonstrate that the exchangeability principle can lead to new randomized algorithms for linear algebra problems.
	As a case study, we will explore the problem of \emph{implicit trace estimation}:
	\begin{center} \vspace{0.5pc}
		\fbox{ \begin{minipage}{0.9\textwidth}
				\textbf{Implicit trace estimation problem:} Given access to a square matrix $\mat{A}$ via the matrix--vector product (matvec) operation $\vec{\omega} \mapsto \mat{A}\vec{\omega}$, estimate the trace of $\mat{A}$.
			\end{minipage}
		} \vspace{0.5pc}
	\end{center}
	Trace estimation plays a role in a wide range of areas, including computational statistics,
	statistical mechanics, and network analysis.  See the survey~\cite{US17} for more applications.
	
	As we will see, it is natural to design randomized algorithms for trace estimation that use matvecs
	between the input matrix and random test vectors.  At present, the state-of-the-art trace estimators
	do not satisfy the exchangeability principle.  By pursuing this insight, we will develop better
	trace estimators.  Given a fixed budget of matvecs, the new algorithms can reduce the variance of the trace estimate
	by several orders of magnitude.  This case study highlights the importance of enforcing
	exchangeability in the design of randomized algorithms.
	
	\subsection{Stochastic trace estimators}
	\label{sec:existing_ests}
	
	In this section, we outline the classic approach to randomized trace estimation
	based on Monte Carlo approximation.  Then we introduce a more modern approach
	that incorporates a variance reduction strategy.
	
	\subsubsection{The Girard--Hutchinson estimator}
	
	The first randomized algorithm for trace estimation was proposed by Girard \cite{Gir89}
	and extended by Hutchinson \cite{Hut89}.
	
	Let $\mat{A} \in \real^{N \times N}$ be a square input matrix.  Consider an isotropic random vector $\vec{\omega} \in \real^N$:
	\begin{equation} \label{eq:isotropic}
		\expect[ \vec{\omega}^{\vphantom{*}}\vec{\omega}^* ] = \Id.
	\end{equation}
	For example, we may take a random sign vector $\vec{\omega} \sim \textsc{uniform}\{\pm 1\}^N$.
	By isotropy,
	$$
	\expect[ \vec{\omega}^* (\mat{A} \vec{\omega}) ] = \tr \mat{A}.
	$$
	The symbol ${}^*$ denotes the transpose.  This relation suggests a Monte Carlo method.
	
	Accordingly, the Girard--Hutchinson trace estimator takes the form
	\begin{equation} \label{eq:girard_hutchinson}
		\hat{\tr}_{\rm GH} \coloneqq \frac{1}{m} \sum_{i=1}^m \vec{\omega}_i^* (\mat{A}\vec{\omega}_i^{\vphantom{*}})
		\quad\text{where the $\vec{\omega}_i$ are iid copies of $\vec{\omega}$.}
	\end{equation}
	This estimator is exchangeable, and it is unbiased: $\expect
	\big[ \hat{\tr}_{\rm GH} \big] = \tr \mat{A}$.  We can measure the quality of the estimator using the variance, $\Var\big[ \hat{\tr}_{\rm GH} \big]$.  The variance depends on the matrix $\mat{A}$ and the distribution of $\vec{\omega}$,
	but it converges to zero at the Monte Carlo rate $\Theta(m^{-1})$ as we increase the number $m$ of samples.
	See the survey~\cite[\S4]{MT20a} for more discussion.
	
	\subsubsection{The \HutchPP estimator}
	
	To improve on the Girard--Hutchinson estimator, several papers~\cite{GSO17,Lin17,MMMW21,SAI17}
	have advocated variance reduction techniques.  The key idea is to form
	a low-rank approximation of the input matrix.  We can compute the trace of the approximation
	exactly (as a control variate), so we only need to estimate the trace of the residual.
	This approach can attain lower variance than the Monte Carlo method.
	
	The \HutchPP estimator of Meyer, Musco, Musco, and Woodruff~\cite{MMMW21}
	crystallizes the variance reduction strategy.  Let $\mat{A} \in \real^{N \times N}$ be a square input matrix.
	Given a fixed budget of $m$ matvecs, with $m$ divisible by $3$, \HutchPP proceeds as follows:
	\begin{enumerate}
		\item \textit{Sample} iid isotropic vectors $\vec{\omega}_1 ,\ldots, \vec{\omega}_{2m/3}\in\real^N$ as in~\cref{eq:isotropic}.
		\item \textit{Sketch} $\mat{Y} = \mat{A}\begin{bmatrix} \vec{\omega}_{m/3+1} & \vec{\omega}_{m/3+2} & \cdots & \vec{\omega}_{2m/3}\end{bmatrix}$
		\item \textit{Orthonormalize} $\mat{Q} = \orth(\mat{Y})$.
		\item \textit{Output} the estimate
		\begin{equation} \label{eq:hutch++}
			\hat{\tr}_{\rm H++} \coloneqq \tr(\mat{Q}^* (\mat{A}\mat{Q})) +  \frac{1}{m/3} \sum_{i=1}^{m/3} \vec{\omega}_i^*(\Id-\mat{Q}\mat{Q}^*) \big(\mat{A}(\Id-\mat{Q}\mat{Q}^*)\vec{\omega}_i^{\vphantom{*}}\big).
		\end{equation}
	\end{enumerate}
	See \cref{alg:hutchpp} for efficient \HutchPP pseudocode.
	
	\begin{algorithm}[t]
		\caption{\HutchPP~\cite{MMMW21} \label{alg:hutchpp}}
		\begin{algorithmic}[1]
			\Require Matrix $\mat{A}\in\real^{N\times N}$ and number $m$ of matvecs, where $m$ is divisible by $3$
			\Ensure Trace estimate $\hat{\tr} \approx \tr \mat{A}$
			\State Draw iid isotropic $\vec{\omega}_1,\ldots,\vec{\omega}_{2m/3} \in \real^N$ \Comment{For example, $ \vec{\omega}_i \sim \textsc{uniform} \{\pm 1\}^{N}$}
			\State $\mat{Y} \leftarrow \mat{A}\begin{bmatrix} \vec{\omega}_{m/3 + 1} & \cdots & \vec{\omega}_{2m/3} \end{bmatrix}$
			\Comment{Use matvecs}
			\State $\mat{Q} \leftarrow \orth(\mat{Y})$
			\State $\mat{G} \leftarrow \begin{bmatrix} \vec{\omega}_1 & \cdots & \vec{\omega}_{m/3}\end{bmatrix} - \mat{Q}\mat{Q}^*\begin{bmatrix} \vec{\omega}_1 & \cdots & \vec{\omega}_{m/3}\end{bmatrix}$
			\State $\hat{\tr} \leftarrow \tr\big(\mat{Q}^* (\mat{A}\mat{Q}^{\vphantom{*}}) \big) + (m/3)^{-1}\tr(\mat{G}^* (\mat{A}\mat{G}))$
			\Comment{Use matvecs}
		\end{algorithmic}
	\end{algorithm}
	
	To illustrate how \HutchPP takes advantage of low-rank approximation,
	we first observe that $\mat{\hat{A}} = \mat{Q}\mat{Q}^*\mat{A}$ is a low-rank approximation of the matrix $\mat{A}$.
	Indeed, the matrix $\mat{\hat{A}}$ coincides with the randomized SVD \cite{HMT11} formed from the test matrix $\begin{bmatrix} \vec{\omega}_{m/3 + 1} & \cdots & \vec{\omega}_{2m/3} \end{bmatrix}$.
	\HutchPP computes the trace of the low-rank approximation:
	\begin{equation*}
		\tr \mat{\hat{A}} = \tr\bigl(\mat{Q}\mat{Q}^*\mat{A}\bigr) = \tr\bigl(\mat{Q}^* \mat{A} \mat{Q} \bigr).
	\end{equation*}
	Afterward, \HutchPP applies the Girard--Hutchinson estimator to estimate the trace of the residual 
	\begin{equation*}
		\tr\bigl(\mat{A}-\mat{\hat{A}}\bigr) = \tr\bigl((\Id - \mat{Q}\mat{Q}^*)\mat{A}\bigr)= \tr\bigl((\Id - \mat{Q}\mat{Q}^*)\mat{A}(\Id - \mat{Q}\mat{Q}^*)\bigr).
	\end{equation*}
	Like the Girard--Hutchinson trace estimator, the \HutchPP estimator is unbiased.
	In contrast to the $\Theta(m^{-1})$ variance of Girard--Hutchinson,
	the variance of \HutchPP is no greater than $\order(m^{-2})$.
	In practice, the reduction in variance is conspicuous.
	However, the \HutchPP estimator violates the exchangeability
	principle, so we recognize an opportunity to design a better algorithm.
	
	\subsection{New exchangeable trace estimators}
	\label{sec:ests}
	
	The \HutchPP estimator is not exchangeable because it uses some test vectors to perform low-rank approximation, while it uses other test vectors to estimate the trace of the residual.
    Although it might seem natural to symmetrize \HutchPP over all splits of the test vectors, this approach is computationally infeasible due the combinatorial explosion in the number ${2m/3 \choose m/3} \ge 2^{m/3}$ of assignments of $2m/3$ test vectors to two groups of $m/3$.

    To circumvent this obstacle, we develop a new family of exchangeable trace estimators that use an unbalanced splitting of $m/2-1$ test vectors for low-rank approximation and just $1$ test vector for residual trace estimation.
    By symmetrizing this unbalanced estimator, we effectively use \emph{all} of the test vectors for low-rank
	approximation and for estimating the trace of the residual.
	To make this efficient, we use a leave-one-out technique that can be implemented at the same computational
	cost as \HutchPP.
	This innovation can reduce the variance
	by several orders of magnitude.
	
	\subsubsection{The \textsc{XTrace}\xspace estimator}
	\label{sec:xtrace-intro}
	
	Our first method, called \textsc{XTrace}\xspace, is an exchangeable trace estimator designed
	for general square matrices.  It computes a family of variance-reduced trace estimators.
	Each estimator uses all but one test vector to form a low-rank approximation,
	and it uses the remaining test vector to estimate the trace of the residual.
	\textsc{XTrace}\xspace then averages the basic estimators together to obtain an exchangeable trace estimator.
	
	Let us give a more detailed description.
	Fix a square input matrix $\mat{A} \in \real^{N \times N}$.  The parameter $m$ is the number of matvecs,
	where $m$ is an even number.  Draw an iid family $\vec{\omega}_1, \dots, \vec{\omega}_{m/2} \in \real^N$
	of isotropic test vectors, and define the test matrix
	$$
	\mat{\Omega} = \begin{bmatrix} \vec{\omega}_1 & \vec{\omega}_2 & \vec{\omega}_3 & \dots & \vec{\omega}_{m/2}
	\end{bmatrix}.
	$$
	Construct the orthonormal matrices
	\begin{equation} \label{eq:Q_i}
		\mat{Q}_{(i)} = \orth( \mat{A} \mat{\Omega}_{-i} )
		\quad\text{for each $i = 1, \dots, m/2$},
	\end{equation}
	where $\mat{\Omega}_{-i}$ is the test matrix with the $i$th column removed.
	Compute the basic trace estimators
	\begin{equation} \label{eq:trX-i}
		\hat{\tr}_{i} \coloneqq \tr\big(\mat{Q}_{(i)}^* \big(\mat{A}\mat{Q}_{(i)}^{\vphantom{*}}\big)\big) + \vec{\omega}_i^*\big(\Id-\mat{Q}_{(i)}^{\vphantom{*}}\mat{Q}_{(i)}^*\big) \big(\mat{A}\big(\Id-\mat{Q}_{(i)}^{\vphantom{*}}\mat{Q}_{(i)}^*\big)\vec{\omega}_i^{\vphantom{*}}\big)
	\end{equation}
    for $i = 1, \dots, m/2$.
	The \textsc{XTrace}\xspace estimator averages these basic estimators:
	\begin{equation} \label{eq:xtrace}
		\hat{\tr}_{\rm X} \coloneqq \frac{1}{m/2} \sum_{i=1}^{m/2} \hat{\tr}_i.
	\end{equation}
	The \textsc{XTrace}\xspace method gives an unbiased, exchangeable estimate for the trace. \Cref{thm:main}
	provides a detailed a priori bound for the variance.  We can also obtain an a posteriori estimate for the error using the formula
	$$
	\hat{\err}_{\rm X}^2 \coloneqq \frac{1}{(m/2)(m/2 - 1)} \sum_{i=1}^{m/2} (\hat{\tr}_i - \hat{\tr}_{\rm X})^2.
	$$
	\Cref{sec:error} contains further discussion of the error estimate.
	See \cref{alg:xtrace_naive} for a na\"ive implementation of \textsc{XTrace}\xspace.  
 
	While it may not seem obvious from \cref{eq:Q_i,eq:trX-i,eq:xtrace}, \textsc{XTrace}\xspace requires exactly $m$ matvecs with the input matrix $\mat{A}$.
    Indeed, as we detail in \cref{sec:xtrace}, all the information needed to form the \textsc{XTrace}\xspace estimator can be collected in two batches of matvecs.
    First, we compute $\mat{Y} \coloneqq  \mat{A}\mat{\Omega}$ where $\mat{\Omega} \coloneqq \begin{bmatrix} \vec{\omega}_1 & \cdots & \vec{\omega}_{m/2}\end{bmatrix}$.
    Then, we orthonormalize the matvecs to obtain $\mat{Q} \coloneqq \orth(\mat{Y})$.
    Finally, we use our second round of matvecs to compute $\mat{A}\mat{Q}$.
    With careful attention to the
	linear algebra, we can use $\mat{A} \mat{\Omega}$ and $\mat{A}\mat{Q}$ to form the \textsc{XTrace}\xspace estimator at the same $\order(m^2N)$ cost as computing a single estimator~\cref{eq:trX-i}; see \cref{sec:xtrace} and \cref{sec:xtrace_derivation} for details.
	
	\begin{algorithm}[t]
		\caption{\textsc{XTrace}\xspace: Na{\"i}ve implementation \label{alg:xtrace_naive}}
		\begin{algorithmic}[1]
			\Require Matrix $\mat{A} \in \real^{N \times N}$ and number $m$ of matvecs, where $m$ is even
			\Ensure Trace estimate $\hat{\tr} \approx \tr \mat{A}$ and error estimate $\hat{\err} \approx \vert\hat{\tr} - \tr \mat{A}\vert$
			\State Draw iid isotropic $\vec{\omega}_1,\ldots,\vec{\omega}_{m/2} \in \real^N$ \Comment{See \cref{sec:distribution}}
			\State $\mat{Y} \leftarrow \mat{A}\begin{bmatrix} \vec{\omega}_1 & \cdots & \vec{\omega}_{m/2}\end{bmatrix}$
			\Comment{Use matvecs}
			\For{$i = 1,2,3,\ldots,m/2$}
			\State $\mat{Q}_{(i)} \leftarrow \orth(\mat{Y}_{-i})$
			\Comment{Remove $i$th column of $\mat{Y}$}
			\State $\hat{\tr}_i \leftarrow \tr\big(\mat{Q}_{(i)}^*(\mat{A}\mat{Q}_{(i)}^{\vphantom{*}})\big) + \vec{\omega}_i^*\big(\Id-\mat{Q}_{(i)}^{\vphantom{*}}\mat{Q}_{(i)}^*\big) \big( \mat{A}\big(\Id-\mat{Q}_{(i)}^{\vphantom{*}}\mat{Q}_{(i)}^*\big) \big)\vec{\omega}_i^{\vphantom{*}}$
			\Comment{Use matvecs}
			\EndFor
			\State $\hat{\tr} \gets (m/2)^{-1} \sum_{i=1}^{m/2} \hat{\tr}_i$
			\State $\hat{\err}^2 \gets ((m/2)(m/2 - 1))^{-1} \sum_{i=1}^{m/2} (\hat{\tr}_i - \hat{\tr})^2$
		\end{algorithmic}
	\end{algorithm}
	
	\subsubsection{The \NysTrace estimator}
	\label{sec:nystrace-intro}
	
	Our second method, \NysTrace, is an exchangeable trace estimator designed
	for positive-semidefinite (psd) matrices.  Rather than using a randomized SVD to
	reduce the variance, this estimator uses a Nystr{\"o}m
	approximation~\cite[\S14]{MT20a} of the psd matrix $\mat{A} \in \real^{N \times N}$.
	The Nystr\"om approximation takes the form
	\begin{equation} \label{eq:nys}
		\mat{A}\langle \mat{X}\rangle \coloneqq \mat{A}\mat{X} (\mat{X}^*\mat{A}\mat{X})^\dagger (\mat{A}\mat{X})^*
		\quad\text{for a test matrix $\mat{X} \in \real^{N\times k}$.}
	\end{equation}
	The Nystr{\"o}m method requires only $k$ matvecs to compute a rank-$k$ approximation, while
	the randomized SVD requires $2k$ matvecs.
	
	Let us summarize the \NysTrace method.  Draw iid isotropic test vectors $\vec{\omega}_1, \dots, \vec{\omega}_m$,
	and form the test matrix $\mat{\Omega} = \begin{bmatrix} \vec{\omega}_1 & \dots & \vec{\omega}_m \end{bmatrix}$.
	The basic estimators take the form
	\begin{equation} \label{eq:XNys-i}
		\hat{\tr}_i \coloneqq \tr \mat{A}\langle \mat{\Omega}_{-i} \rangle + \vec{\omega}_i^* \big((\mat{A} - \mat{A}\langle \mat{\Omega}_{-i}\rangle)\vec{\omega}_i^{\vphantom{*}} \big)
		\quad\text{for $i = 1, \dots, m$.}
	\end{equation}
	As usual, $\mat{\Omega}_{-i}$ denotes the test matrix with the $i$th column removed.
	To obtain the \NysTrace estimator and an error estimate, we use the formulas
	\begin{equation} \label{eq:xnystrace}
		\hat{\tr}_{\rm XN} \coloneqq \frac{1}{m} \sum_{i=1}^m \hat{\tr}_i
		\quad\text{and}\quad
		\hat{\err}_{\rm XN} \coloneqq \frac{1}{m(m-1)} \sum_{i=1}^m (\hat{\tr}_i - \hat{\tr}_{\rm XN})^2.
	\end{equation}
	The \NysTrace estimator is unbiased and exchangeable.  \Cref{thm:main} provides a bound
	for the variance.
	See \cref{alg:xnystrace_naive} for na{\"i}ve \NysTrace pseudocode
	and \cref{sec:nystrace} for a more efficient approach.
	
	The recent paper~\cite{PCK22} describes an estimator called \NysPP that uses a Nystr{\"o}m approximation to perform reduced-variance trace estimation.  \NysPP violates the exchangeability principle,
	while \NysTrace repairs this weakness.
	
	\begin{algorithm}[t]
		\caption{\NysTrace: Na{\"i}ve implementation \label{alg:xnystrace_naive}}
		\begin{algorithmic}[1]
			\Require Psd matrix $\mat{A} \in \real^{N \times N}$ and number $m$ of matvecs
			\Ensure Trace estimate $\hat{\tr} \approx \tr \mat{A}$ and error estimate $\hat{\err} \approx \vert \hat{\tr} - \tr \mat{A} \vert$
			\State Draw iid isotropic $\vec{\omega}_1,\ldots,\vec{\omega}_m \in \real^N$
			\Comment See \cref{sec:distribution}
			\State $\mat{\Omega} \leftarrow \begin{bmatrix} \vec{\omega}_1 & \dots & \vec{\omega}_m\end{bmatrix}$
			\State $\mat{Y} \leftarrow \mat{A}\mat{\Omega}$
			\Comment{Use matvecs}
			\For{$i = 1,2,3,\ldots,m$}
			\State $\mat{\hat{A}}_i \leftarrow \mat{Y}_{-i}^{\vphantom{*}}(\mat{\Omega}_{-i}^*\mat{Y}_{-i}^{\vphantom{*}})^\dagger \mat{Y}_{-i}^*$
			\Comment{Remove $i$th column of $\mat{Y}$ and $\mat{\Omega}$}
			\State $\hat{\tr}_i \leftarrow \tr \mat{\hat{A}}_i + \vec{\omega}_i^*((\mat{A}-\mat{\hat{A}}_i)\vec{\omega}_i^{\vphantom{*}})$
			\Comment{Use matvecs}
			\EndFor
			\State $\hat{\tr} \gets m^{-1} \sum_{i=1}^m \hat{\tr}_i$
			\State $\hat{\err}^2 \gets (m(m-1))^{-1} \sum_{i=1}^m (\hat{\tr}_i - \hat{\tr})^2$
		\end{algorithmic}
	\end{algorithm}

	\subsubsection{Stochastic diagonal estimators}
	
	As an extension of \textsc{XTrace}\xspace, we also propose the \XDiag algorithm for estimating the \emph{diagonal} of an implicitly defined matrix.
	We will discuss this approach in \cref{sec:xdiag}.
	
	\subsection{Numerical experiments} \label{sec:numerical_experiments}
	
	To highlight the advantages of the \textsc{XTrace}\xspace and \NysTrace estimators,
	we present some motivating numerical experiments.  \Cref{sec:experiments}
	contains further numerical work.
	
	\subsubsection{Exploiting spectral decay} 
	\label{sec:intro-synth-exper}
	
	Our first experiment uses a synthetic input matrix to illustrate how the
	exchangeable estimators wring more information out of the samples.
	
	We apply several trace estimators to a psd matrix with exponentially
	decreasing eigenvalues; see \cref{sec:comparisons} for the details of the matrix.
	\Cref{fig:exp_standout} reports the average error over 1000 trials.
	The Girard--Hutchinson estimator (\Hutch) converges at the Monte Carlo rate,
	whereas the newer estimators all converge much faster.  This
	improvement comes from variance reduction techniques that
	exploit the spectral decay.
	Observe that \textsc{XTrace}\xspace and \NysTrace converge exponentially fast at $1.5\times$ and $3\times$ the rate of \HutchPP, until reaching machine precision.
	For a fixed budget of $m$ matvecs, \textsc{XTrace}\xspace and \NysTrace can reduce the error by several orders of magnitude compared to \HutchPP.
	Strikingly, the reduction in variance from enforcing exchangeability is almost as significant as the reduction in variance from using a low-rank approximation as a control variate.
	
	\begin{figure}[t]
		\centering
		\includegraphics[width=0.6\textwidth]{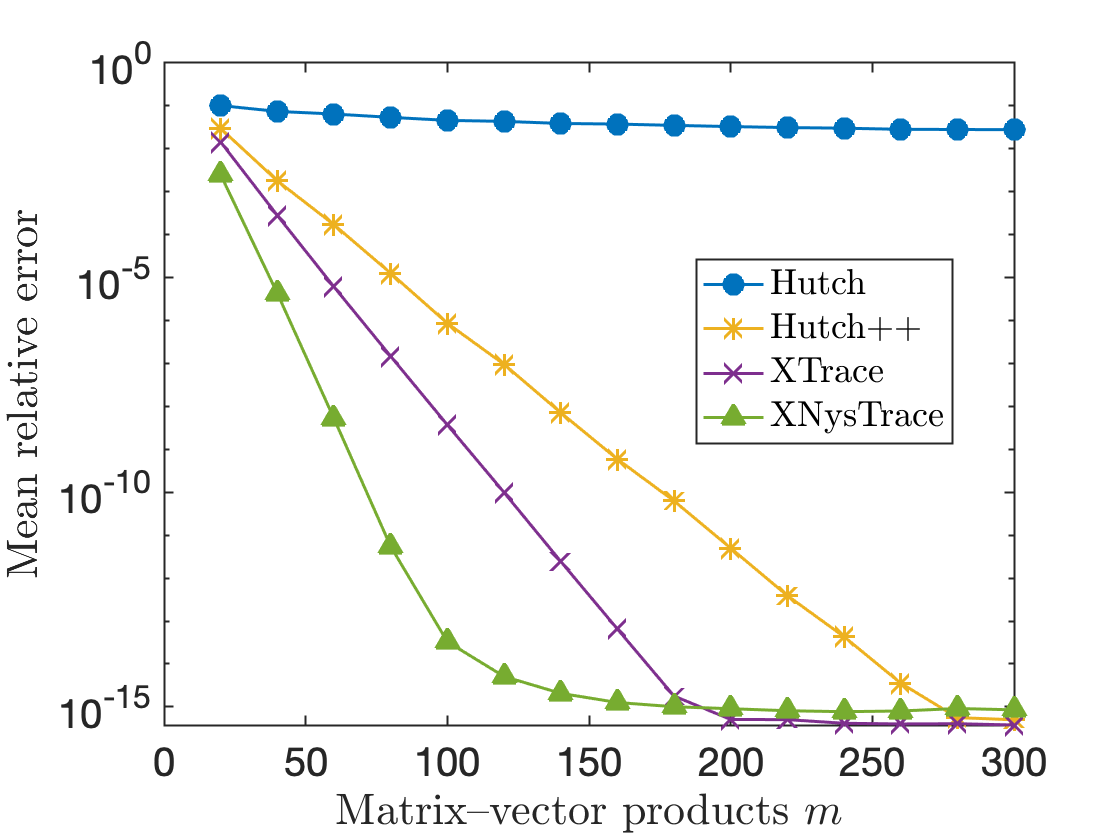} 
		
		\caption{\textbf{Exploiting spectral decay.}  Average error of trace estimators applied to a (synthetic) psd matrix with exponentially decreasing eigenvalues. See \cref{sec:intro-synth-exper}.} \label{fig:exp_standout}
	\end{figure}
	
	\subsubsection{Computing partition functions}
	\label{sec:intro-quantum-exper}
	
	Our second experiment shows how the advantages of using
	exchangeable estimators persist in a scientific application.
	
	We apply several trace estimators to compute the partition function for a quantum system
	\begin{equation*}
		Z(\beta) \coloneqq \tr \exp(-\beta \mat{H}),
	\end{equation*}
	where $\mat{H}$ is a symmetric Hamiltonian matrix and $\beta > 0$ is an inverse temperature.
	Specifically, we consider the Hamiltonian matrix $\mat{H}$ for the tranverse-field Ising model on $18$ sites, which has dimension $N = 2^{18} = 262\,144$.
	See \cref{sec:quantum} for details on the matrix $\mat{H}$ and the partition function $Z(\beta)$.
	To evaluate matvecs with $\exp(-\beta\mat{H})$, we use the code of Higham \cite{Hig10} that implements an adaptive polynomial approximation \cite{AH11}.
    For this problem, each matvec is expensive, occupying over 98\% of the total computation time for \textsc{XTrace}\xspace with $m=100$. 
    Thus, the cost of all the trace estimators is dominated by the computational cost of the matvecs.
	
	\begin{figure}[t]
		\centering
		\includegraphics[width=0.6\textwidth]{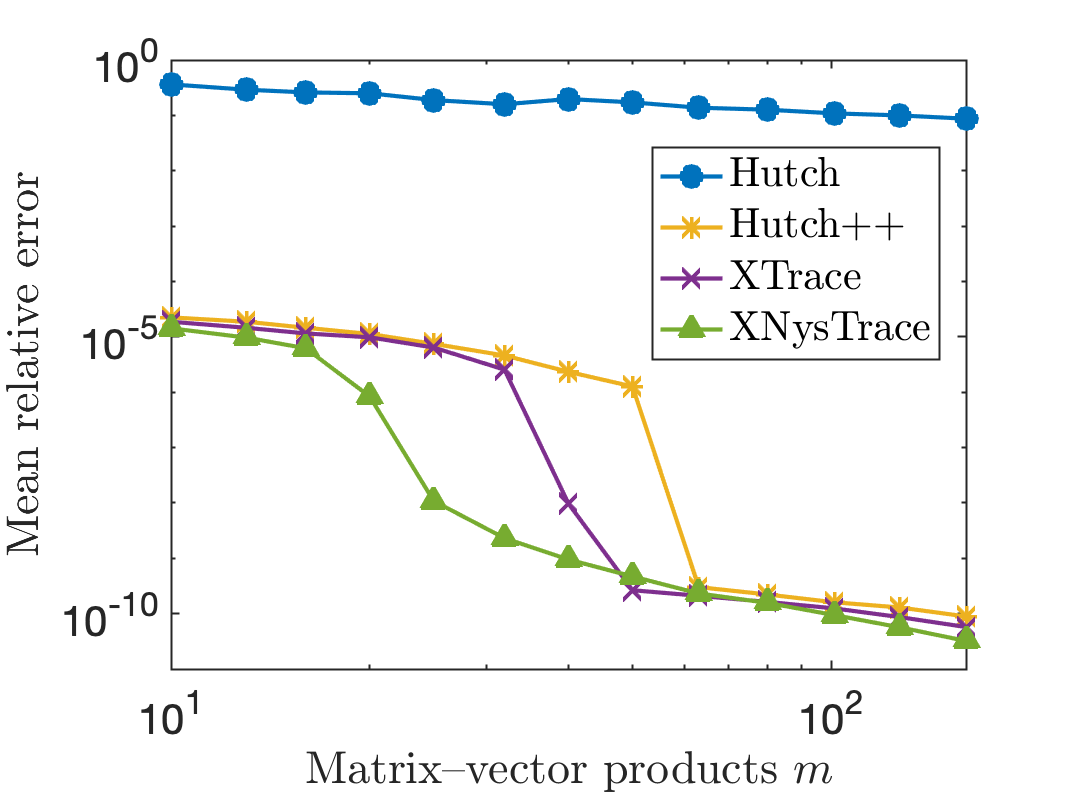} 
		
		\caption{\textbf{Computing a partition function.}  Average error of trace estimators applied to the partition function of the transverse-field Ising model. See \cref{sec:intro-quantum-exper}.} \label{fig:tfim_single_shot}
	\end{figure}
	
	\Cref{fig:tfim_single_shot} reports the mean estimation error over 100 trials.
	With just $m = 10$ matvecs, all the variance-reduced methods achieve errors that are $5$ orders of magnitude
	smaller than the Girard--Hutchinson estimator.
	Furthermore, \textsc{XTrace}\xspace and \NysTrace converge more quickly than \HutchPP as we increase the number of matvecs.
	For example, with $m = 40$ matvecs, \textbf{\textsc{XTrace}\xspace is 240$\times$ more accurate}, and \textbf{\NysTrace is 2400$\times$ more accurate}.
    The disparate performance of the three variance-reduced methods is a consequence of the eigenvalues of the matrix $\exp(-\beta\mat{H})$, which drop sharply from eigenvalue $19$ to $20$: $\lambda_{19}/\lambda_{20} = 4600$.
    Because \NysTrace, \textsc{XTrace}\xspace, and \HutchPP use approximation ranks of roughly $m$, $m/2$, and $m/3$, it takes these algorithms 20, 40, and 60 matvecs to capitalize on this eigenvalue drop, leading to the significant differences in performance between the methods. 
	
	\subsection{Theoretical guarantees}
	
	To explain the excellent performance of the exchangeable estimators,
	we establish detailed theoretical guarantees.
	For theoretical convenience, our analysis uses standard
	normal test vectors.  As a consequence, we can deliver
	explicit constants that allow us to make meaningful
	comparisons among the methods.  

	\begin{theorem}[Variance bounds] \label{thm:main}
		Let $\mat{A} \in \real^{N \times N}$ be a square matrix.
		Fix the number of matvecs: $m \geq 8$.
		The \HutchPP, \textsc{XTrace}\xspace, and \NysTrace estimators are unbiased estimators
		of the trace.  With standard normal test vectors $\vec{\omega} \sim \textsc{normal}(\vec{0}, \Id)$,
		these estimators satisfy the variance bounds
		\begin{align*}
			\bigl(\mathbb{E} \bigl|\hat{\tr}_{\rm H++} - \tr \mat{A} \bigr|^2\bigr)^{\tfrac{1}{2}}
			&\leq \makebox[\widthof{$\sqrt{m}$}][r]{}\min_{\color{blue} r \leq m/3 - 2} \, 
			\Biggl( \sqrt{2} \frac{\lVert \mat{A} - \lowrank{\mat{A}}_r \rVert_{\rm F}}{\sqrt{m \slash 3 - r - 1}} \Biggr); \\
			\bigl(\mathbb{E} \bigl|\hat{\tr}_{\rm X} - \tr \mat{A} \bigr|^2\bigr)^{\tfrac{1}{2}}
			&\leq \makebox[\widthof{$\sqrt{m}$}][r]{$\sqrt{m}$}\min_{\color{blue} r \leq m \slash 2 - 4} \,
			\Biggl(2 \frac{\lVert \mat{A} - \lowrank{\mat{A}}_r \rVert}{\sqrt{m/2 - r - 3}} + 2 \e \frac{\lVert \mat{A} - \lowrank{\mat{A}}_r \rVert_{\rm F}}{m \slash 2 - r - 3} \Biggr); \\
			\bigl(\mathbb{E} \bigl|\hat{\tr}_{\rm XN} - \tr \mat{A} \bigr|^2\bigr)^{\tfrac{1}{2}}
			&\leq \makebox[\widthof{$\sqrt{m}$}][r]{$m$}\min_{\color{blue} r \leq m-6} \, 
			\Biggl(\sqrt{8} \frac{\lVert \mat{A} - \lowrank{\mat{A}}_r \rVert}{m - r - 5}
			+ \sqrt{2} \frac{\lVert \mat{A} - \lowrank{\mat{A}}_r \rVert_{\rm F}}{(m - r - 5)^{3/2}}\\
			&\qquad\qquad\qquad\qquad\qquad + 
			5 \e^2 \frac{\lVert \mat{A} - \lowrank{\mat{A}}_r\rVert_\ast}{(m-r-5)^2} \Biggr).
		\end{align*}
		These formulas involve the Frobenius norm $\norm{\cdot}_{\rm F}$, the spectral norm $\norm{\cdot}$,
		and the trace/nuclear norm $\norm{\cdot}_*$.  The matrix $\lowrank{\mat{A}}_r$ is a simultaneous best rank-$r$ approximation of $\mat{A}$ in these norms.
		
		In addition, for each of these three methods,
		it suffices to use $m = \order(\eta^{-1/2})$ matvecs
		to achieve the variance bound
		\begin{equation}
			\Var[\hat{\tr}] = \expect \big[ | \hat{\tr} - \tr \mat{A} |^2 \big]
			\leq \eta \lVert \mat{A} \rVert_\ast^2
			\quad\text{for $\eta \in (0,1)$.}
		\end{equation}
		
	\end{theorem}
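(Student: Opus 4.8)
The plan is to treat the three estimators in parallel through a common architecture. For each method, I would first establish unbiasedness by conditioning: each basic estimator $\hat{\tr}_i$ in \cref{eq:trX-i} or \cref{eq:XNys-i} has the structure ``exact trace of a low-rank approximation formed from a subset of test vectors, plus a Girard--Hutchinson correction for the residual on the remaining vector.'' Conditioning on the subset that builds $\mat{Q}_{(i)}$ (or the Nystr\"om sketch), isotropy of $\vec{\omega}_i$ gives $\expect[\hat{\tr}_i] = \tr\mat{A}$; averaging preserves unbiasedness, and the same conditioning argument handles \HutchPP directly from \cref{eq:hutch++}. The harder half is the variance, and here the key structural fact is that $\hat{\tr} - \tr\mat{A}$ is (an average of) residual quadratic forms $\vec{\omega}^*(\Id - \mat{P})\mat{A}(\Id-\mat{P})\vec{\omega} - \tr((\Id-\mat{P})\mat{A}(\Id-\mat{P}))$ where $\mat{P}$ is a random orthogonal projector of rank roughly $m/c$; so the variance is governed by $\lVert(\Id-\mat{P})\mat{A}(\Id-\mat{P})\rVert_{\rm F}^2$, and everything reduces to controlling how well the randomized SVD / Nystr\"om approximation captures $\mat{A}$ from a Gaussian sketch.

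Concretely, for \HutchPP I would fix a target rank $r \le m/3-2$, split off the $r$-dimensional dominant subspace, and invoke the standard Gaussian randomized-SVD tail/expectation bounds (e.g.\ as in \cite{HMT11}): with an $(m/3)$-column Gaussian sketch, $\expect\lVert(\Id-\mat{Q}\mat{Q}^*)\mat{A}\rVert_{\rm F}^2 \le \frac{m/3 - 1}{m/3 - r - 1}\lVert\mat{A}-\lowrank{\mat{A}}_r\rVert_{\rm F}^2$, plus a fresh $\sqrt 2$ factor from the Girard--Hutchinson variance of a symmetric-part quadratic form with Gaussian vectors; combining and taking the minimum over $r$ yields the first line. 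For \XTrace and \NysTrace the extra ingredient is the leave-one-out exchangeable averaging: the variance of the average $\frac1{m/2}\sum_i\hat{\tr}_i$ is \emph{not} just $\frac{2}{m}$ times the single-estimator variance, because the $\hat{\tr}_i$ are strongly correlated through the shared sketch. I would bound $\Var[\hat{\tr}_{\rm X}]$ by $\expect[(\hat{\tr}_1 - \tr\mat{A})(\hat{\tr}_2 - \tr\mat{A})]$-type cross terms, which after conditioning collapse to an expected squared Frobenius (or spectral, trace) norm of the residual of the rank-$(m/2-1)$ randomized SVD built from \emph{almost all} the vectors — this is why the $\lVert\mat{A}-\lowrank{\mat{A}}_r\rVert$ spectral-norm term appears with a $1/\sqrt{m/2-r-3}$ rather than $1/m$ scaling, and why an $\e$ (resp.\ $\e^2$) constant surfaces from the moment bounds on inverse-Wishart-type quantities $\lVert(\mat{G}_1^*\mat{G}_1)^{-1}\rVert$ that control the Nystr\"om/SVD approximation. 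The \NysTrace line needs the analogous Nystr\"om error analysis (cf.\ \cite[\S14]{MT20a}, \cite{PCK22}) giving a three-term bound in spectral/Frobenius/trace norm, with the trace-norm term carrying the worst $(m-r-5)^{-2}$ decay and the $5\e^2$ constant.

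For the final ``it suffices to use $m = \order(\eta^{-1/2})$ matvecs'' claim I would simply specialize each bound with the trivial choice $r = 0$: then $\lVert\mat{A}-\lowrank{\mat{A}}_0\rVert_{\rm F} \le \lVert\mat{A}\rVert_*$, $\lVert\mat{A}\rVert \le \lVert\mat{A}\rVert_*$, and each displayed right-hand side is $\order(\lVert\mat{A}\rVert_*/m)$ (the dominant term being the $1/(m/3-1)$ or $1/(m/2-3)$ or $1/(m-5)$ factor times a Frobenius or trace norm bounded by $\lVert\mat{A}\rVert_*$); squaring gives $\Var[\hat{\tr}] = \order(\lVert\mat{A}\rVert_*^2/m^2)$, so $m \asymp \eta^{-1/2}$ forces this below $\eta\lVert\mat{A}\rVert_*^2$. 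I expect the main obstacle to be the cross-correlation bookkeeping in the exchangeable average: controlling $\expect[(\hat{\tr}_i - \tr\mat{A})(\hat{\tr}_j-\tr\mat{A})]$ requires simultaneously handling the residual projectors $\Id - \mat{Q}_{(i)}\mat{Q}_{(i)}^*$ and $\Id-\mat{Q}_{(j)}\mat{Q}_{(j)}^*$, which differ only by the rank-one swap of $\vec{\omega}_i$ for $\vec{\omega}_j$, and squeezing out the right constants (the $2$, $2\e$, $\sqrt 8$, $\sqrt 2$, $5\e^2$) demands careful moment estimates for Gaussian quadratic forms and for norms of pseudoinverses of tall Gaussian matrices rather than any soft argument.
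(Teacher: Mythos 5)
Your high-level architecture matches the paper's: unbiasedness by conditioning on $\mat{\Omega}_{-i}$ and isotropy; variance reduced to expected squared residuals of randomized SVD / Nystr\"om; covariance terms between the leave-one-out estimators handled separately; final constants traced to Gaussian / inverse-Wishart moment bounds (the paper invokes Lemma~\ref{lem:bounds} from~\cite{TW23} and the three intermediate Propositions~\ref{prop:hpp}, \ref{prop:x}, \ref{prop:xn}). Two points where your sketch is off, one minor and one fatal.

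The minor one is the covariance mechanism for \XTrace/\NysTrace. You say you would compare the residual projectors $\Id - \mat{Q}_{(1)}\mat{Q}_{(1)}^*$ and $\Id - \mat{Q}_{(2)}\mat{Q}_{(2)}^*$, ``which differ only by the rank-one swap of $\vec{\omega}_1$ for $\vec{\omega}_2$.'' That is not quite the paper's reduction: the covariance $\Cov[\hat{\tr}_1, \hat{\tr}_2]$ is reshaped by subtracting $\tr\mat{X} - \vec{\omega}_i^*\mat{X}\vec{\omega}_i$ for an arbitrary matrix $\mat{X}$ independent of $(\vec{\omega}_1,\vec{\omega}_2)$ — conditioning on $\mat{\Omega}_{-1}$ and $\mat{\Omega}_{-2}$ in turn shows these subtractions are free — and then Cauchy--Schwarz is applied, with $\mat{X}$ chosen as the residual of the rank-$(m/2-2)$ approximation built from $\mat{\Omega}_{-12}$. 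The rank-one object that actually shows up is the difference $\mat{\Pi}_1 - \mat{\Pi}_{12}$ (leave-one-out versus leave-two-out projectors), and this is what converts a Frobenius-norm bound into a spectral-norm bound, which is where \XTrace/\NysTrace gain over \HutchPP. Your phrasing could lead to a bound involving $\mat{\Pi}_1 - \mat{\Pi}_2$, which is a rank-two object and would not give the same reduction cleanly. This is a sketch-level vagueness rather than a wrong idea, but it is where the gain over \HutchPP actually comes from, so it needs to be precise.

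The fatal gap is the final $m = \order(\eta^{-1/2})$ claim. Setting $r = 0$ does \emph{not} work. With $r = 0$ the \HutchPP bound reads $\sqrt{2}\,\lVert \mat{A}\rVert_{\rm F}/\sqrt{m/3 - 1}$, which is $\order(\lVert\mat{A}\rVert_*/\sqrt{m})$, not $\order(\lVert\mat{A}\rVert_*/m)$ — the $\sqrt{m}$ in the denominator is under a square root, so squaring gives only $\order(\lVert\mat{A}\rVert_*^2/m)$, the Monte Carlo rate, far short of the advertised $\order(\lVert\mat{A}\rVert_*^2/m^2)$. The same degradation hits all three bounds: each has at least one Frobenius-norm term decaying only as $1/\sqrt{m}$ or $1/m^{3/2}$ when $r$ is fixed, and the spectral-norm term in the \XTrace bound does not even decay at a fixed $r$. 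What is missing is the singular-value-decay fact stated in \cref{lem:trace_relations}, $\lVert\mat{A} - \lowrank{\mat{A}}_r\rVert \le \lVert\mat{A}\rVert_*/(r+1)$ and $\lVert\mat{A} - \lowrank{\mat{A}}_r\rVert_{\rm F} \le \lVert\mat{A}\rVert_*/(2\sqrt r)$: these trade a power of $r$ for the trace norm, and only when one chooses $r$ to grow linearly in $m$ (the paper uses $r = \lfloor m/6\rfloor - 1$, $\lfloor m/4\rfloor - 2$, and $\lfloor m/2\rfloor - 3$ for the three estimators) do all terms fall to $\order(\lVert\mat{A}\rVert_*/m)$, giving the squared rate $\order(\lVert\mat{A}\rVert_*^2/m^2)$ and hence $m = \order(\eta^{-1/2})$. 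Without this step the proposal proves a weaker $m = \order(\eta^{-1})$ bound for \HutchPP and fails entirely for \XTrace where the $r=0$ spectral-norm term $\sqrt{m}\cdot\lVert\mat{A}\rVert/\sqrt{m/2 - 3}$ does not tend to zero.
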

	The proof of \cref{thm:main} appears in \cref{sec:proofs}.
	
	As the number $m$ of matvecs increases, \cref{thm:main} ensures that the variance of \textsc{XTrace}\xspace, \NysTrace, and \HutchPP decreases at a rate of $\order(1/m^2)$.  Therefore, these algorithms are all superior to the Girard--Hutchinson estimator, whose variance decreases at the Monte Carlo rate $\Theta(1/m)$.
	
	\Cref{thm:main} also demonstrates the advantage of \textsc{XTrace}\xspace and \NysTrace for matrices whose singular values decay rapidly.  This benefit is visible from the error bounds because they allow for larger values $r$ of the approximation rank.  As an example, consider a psd matrix $\mat{A}$ whose eigenvalues have exponential decay with rate $\alpha \in (0,1):$
	\begin{equation*}
		\lambda_i(\mat{A}) \le \alpha^i \quad \text{for } i =1,2, 3, \ldots.
	\end{equation*}
	The errors of \HutchPP, \textsc{XTrace}\xspace, and \NysTrace decay like
	\begin{align*}
		\bigl(\mathbb{E} \bigl|\hat{\tr}_{\rm H++} - \tr \mat{A} \bigr|^2\bigr)^{1 \slash 2} &\le \makebox[\widthof{$\sqrt{m}$}][r]{}\,C_1(\alpha)\, \alpha^{\color{blue} m/3}; \\ \bigl(\mathbb{E} \bigl|\hat{\tr}_{\rm X} - \tr \mat{A} \bigr|^2\bigr)^{1 \slash 2}&\le \sqrt{m} \,C_2(\alpha)\,\alpha^{\color{blue} m/2}; \\
		\bigl(\mathbb{E} \bigl|\hat{\tr}_{\rm XN} - \tr \mat{A} \bigr|^2\bigr)^{1 \slash 2}&\le \makebox[\widthof{$\sqrt{m}$}][r]{$m$} \,C_3(\alpha)\, \alpha^{\color{blue} m}.
	\end{align*}
	For this class of matrix, \textsc{XTrace}\xspace converges exponentially fast at $1.5\times$ the rate of \HutchPP, and \NysTrace converges exponentially fast at $3 \times$ the rate of \HutchPP.
	This is precisely the behavior we observe in \cref{fig:exp_standout}.
	
	\subsection{Benefits}
	
	In summary, the \textsc{XTrace}\xspace and \NysTrace estimators have several desirable features
	as compared with previous approaches.
	
	\begin{enumerate}
		\item \textbf{Higher accuracy:} For a budget of $m$ matvecs, \textsc{XTrace}\xspace and \NysTrace often yield errors that are \emph{orders of magnitude smaller} than \HutchPP. 
		\item \textbf{Efficient algorithms:}  We have designed implementations of \textsc{XTrace}\xspace and \NysTrace that only require $m$ matvecs plus $\order(m^2 N)$ arithmetic operations, which is the same computational cost as \HutchPP. 
		\item \textbf{Error estimation:} We can equip the \textsc{XTrace}\xspace and \NysTrace estimators with reliable error estimates.
	\end{enumerate}
	
	\subsection{A brief history of stochastic trace estimation}
	\label{sec:related_work}
	
	Girard wrote the first paper~\cite{Gir89} on stochastic trace estimation, in which he proposed
	the estimator~\cref{eq:girard_hutchinson} with test vectors drawn uniformly from a
	Euclidean sphere.
	His goal was to develop an efficient way to perform generalized
	cross-validation for smoothing splines.  Hutchinson~\cite{Hut89} suggested using
	random sign vectors instead: $\vec{\omega} \sim \textsc{uniform}\{\pm1\}^N$.
	See~\cite[\S4]{MT20a} for further details.
	
	In the last five years, researchers have developed far more efficient methods for trace
	estimation by incorporating variance reduction techniques.
	In 2017, Saibaba, Alexanderian, and Ipsen \cite{SAI17} proposed a biased estimator that outputs the trace of a low-rank approximation as a surrogate for $\tr \mat{A}$.  Around the same time, Gambhir, Stathopoulos, and Orginos \cite{GSO17} proposed a hybrid estimator, similar to \HutchPP, that outputs the trace of a low-rank approximation, $\tr \mat{\hat{A}}$, plus a Girard--Hutchinson estimate for $\tr (\mat{A}-\mat{\hat{A}})$.  The paper \cite{Lin17} of Lin contains related ideas.
	
	In 2021, Meyer et al.~\cite{MMMW21}  distilled the ideas from \cite{GSO17,Lin17} to develop the \HutchPP algorithm.  They proved that \HutchPP satisfies a worst-case variance bound of $\order(1/m^2)$.
	Meyer et al.~also proposed a version of \HutchPP that needs only a single pass
	over the input matrix.  The follow-up paper \cite{JPWZ21a} sharpens the analysis of the single-pass algorithm.  
	
	Persson, Cortinovis, and Kressner \cite{PCK22} have introduced several refinements to the \HutchPP estimator. 
	Their first improvement adaptively apportions test vectors between approximating the matrix
	and estimating the trace of the residual in order to meet an error tolerance. 
	Their second contribution is \NysPP, a version of \HutchPP for psd matrices that uses
	Nystr\"om approximation.
	
	\textsc{XTrace}\xspace and \NysTrace build on the previous strategies of variance reduction using low-rank approximation.
	However,
	\textsc{XTrace}\xspace and \NysTrace take a step forward by also enforcing the exchangeability principle.
	These algorithms push the ideas of \HutchPP and \NysPP to their limit by using \emph{all} the test vectors for low-rank approximation and \emph{all} the test vectors for residual trace estimation.
	
	To conclude, let us mention two techniques designed for computing the trace of a standard matrix function (that is, $\tr f(\mat{A})$).
	First, stochastic Lanczos quadrature \cite{UCS17} approximates the spectral density of $\mat{A}$,
	from which estimates of $\tr f(\mat{A})$ for any function $f(\cdot)$ are immediately accessible.
	See \cite{CTU22} for a recent overview of stochastic Lanczos quadrature and related ideas.
	As a second approach, when the matvecs $\vec{\omega} \mapsto f(\mat{A})\vec{\omega}$
	are computed using a Krylov subspace method \cite[{\S}13.2]{Hig08}, the paper
	\cite{CH22} recommends reuse of the matvecs from the Krylov subspace method
	for the purpose of trace estimation.
	
	\subsection{Reproducible research}
	Optimized MATLAB R2022b implementations of our algorithms as well as code to reproduce the experiments in this paper can be found online at \url{https://github.com/eepperly/XTrace}.
	
	\subsection{Outline}
	The remainder of the paper is organized as follows.
	\Cref{sec:estimators} describes efficient implementations for \textsc{XTrace}\xspace, \NysTrace, and the diagonal estimator \XDiag.
	\Cref{sec:extensions} discusses error estimation and adaptive stopping,
	\cref{sec:experiments} presents numerical experiments, and \cref{sec:proofs} proves our theoretical results.
	
	\subsection{Notation}
	Matrices and vectors are denoted by capital and lowercase bold letters.
	The $i$th column of $\mat{B}$ is expressed as $\vec{b}_i$, and the $(i,j)$th entry of $\mat{B}$ is $b_{ij}$.
	For a matrix $\mat{B}$, we form a matrix $\mat{B}_{-i}$ by deleting the $i$th column from $\mat{B}$.
	Similarly, we form $\mat{B}_{-ij}$ by deleting the $i$th and $j$th columns.
	We work with the spectral norm $\norm{\cdot}$, the Frobenius norm $\norm{\cdot}_{\rm F}$,
	and the trace/nuclear norm $\norm{\cdot}_*$.
	The symbol $\lowrank{\mat{B}}_r$ denotes a (simultaneous) best rank-$r$
	approximation of $\mat{B}$ with respect to any unitarily invariant norm.
	
	\section{Efficient implementation of exchangeable trace estimators} \label{sec:estimators}
	
	This section works through some issues that arise in the implementation
	of the \textsc{XTrace}\xspace and \NysTrace estimators.
	\Cref{sec:xtrace,sec:nystrace} show how to implement the
	new estimators efficiently using insights from numerical
	linear algebra.
	\Cref{sec:distribution}
	discusses a method of renormalizing the test vectors that improves the accuracy of \textsc{XTrace}\xspace and \NysTrace.
	\Cref{sec:xdiag} develops the \XDiag
	estimator.
	
	\subsection{Computing \textsc{XTrace}\xspace} \label{sec:xtrace}
	In this section, we develop an efficient implementation of the \textsc{XTrace}\xspace estimator
	from \cref{sec:xtrace-intro}.
	Recall that $\mat{A} \in \real^{N \times N}$ is a general square matrix,
	and introduce the test matrix
	$\mat{\Omega} = \begin{bmatrix}\vec{\omega}_1 & \ldots & \vec{\omega}_{m/2}\end{bmatrix} \in \real^{N \times (m/2)}$.
	
	First, we form the matrix product $\mat{A}\mat{\Omega}$ and compute the orthogonal decomposition
	$\mat{A}\mat{\Omega} = \mat{Q}\mat{R}$.
	Following \cite[App.~A.2]{ET22}, 
	we make the critical observation that the basis matrix
	$\mat{Q}_{(i)} = \orth(\mat{A} \mat{\Omega}_{-i})$
	is related to the full basis matrix $\mat{Q}$ by a rank-one update:
	\begin{equation} \label{eq:Q_update}
		\mat{Q}_{(i)}^{\vphantom{*}} \mat{Q}^*_{(i)} = \mat{Q} \big( \Id - \vec{s}_i^{\vphantom{*}}\vec{s}_i^*\big) \mat{Q}^*
		\quad\text{where}\quad
		\mat{R}_{-i}^* \vec{s}_i = \vec{0}
		\quad\text{and}\quad
		\norm{\vec{s}_i}_{\ell_2} = 1.
	\end{equation}
	Thus, the rank-one update requires a unit vector $\vec{s}_i \in \real^{N}$ in the null space of $\mat{R}_{-i}^{*}$.
	
	Let us exhibit an efficient algorithm that simultaneously computes all the vectors $\vec{s}_i$ for $1 \leq i \leq m/2$.
	We argue that the matrix $\mat{S} = \begin{bmatrix} \vec{s}_1 & \dots & \vec{s}_{m/2} \end{bmatrix}$
	can be represented as
	\begin{equation} \label{eq:S_XTrace}
		\mat{S} = (\mat{R}^{*})^{-1} \mat{D},
	\end{equation}
	where the diagonal matrix $\mat{D}$ chosen to enforce the normalization of the columns of $\mat{S}$.  Indeed, since $\mat{R}^*\mat{S} = \mat{D}$ is diagonal, the $i$th column of $\mat{R}_{-i}^*\mat{S}$ is the zero vector.  We reach the desired conclusion $\mat{R}_{-i}^*\vec{s}_i = \vec{0}$.
	
	In summary, given the full basis $\mat{Q}$, we can use \cref{eq:S_XTrace} to compute all the vectors $\vec{s}_i$ needed to construct the orthogonal projectors $\mat{Q}_{(i)}^{\vphantom{*}} \mat{Q}_{(i)}^\ast$ for $1 \leq i \leq m/2$ appearing in~\cref{eq:Q_update}.  This calculation requires just $\order(m^3)$ operations, which is dominated by the cost of solving $m/2$ triangular linear systems.
	It follows that the \textsc{XTrace}\xspace estimator can be computed in just $\order(m^2N)$ operations, which is the same asymptotic cost as \HutchPP. For full details, see the efficient MATLAB implementation in the supplementary materials \cref{list:xtrace}.
	
	\subsection{Computing \NysTrace} \label{sec:nystrace}
	
	We can design a similar method to compute the \NysTrace estimator \cref{eq:xnystrace} efficiently.
	Let $\mat{A} \in \real^{N\times N}$ be a psd matrix and define the test matrix $\mat{\Omega} = \begin{bmatrix} \vec{\omega}_1 & \dots & \vec{\omega}_m \end{bmatrix}$.
	
	As before, we compute the orthogonal decomposition $\mat{A}\mat{\Omega} = \mat{Q}\mat{R}$.
	We can express the Nystr\"om approximation \cref{eq:nys} in the form
	\begin{equation*}
		\nys{\mat{A}}{\mat{\Omega}} = \mat{Q}\mat{R} \mat{H}^{-1} \mat{R}^*\mat{Q}^*,
	\end{equation*}
	where $\mat{H} = \mat{\Omega}^*\mat{A}\mat{\Omega}$.
	After deleting the $i$th column from $\mat{\Omega}$,
	the resulting Nystr\"om approximation satisfies
	\begin{equation} \label{eq:nys_one_deleted}
		\nys{\mat{A}}{\mat{\Omega}_{-i}} = \mat{Q} \mat{R}_{-i}^{\vphantom{*}}\mat{H}_{(i)}^{-1}\mat{R}_{-i}^* \mat{Q}^*,
	\end{equation}
	where $\mat{H}_{(i)}$ is $\mat{H}$ upon deletion of its $i$th row and column.
	To compute $\nys{\mat{A}}{\mat{\Omega}_{-i}}$ efficiently, we recognize that \cref{eq:nys_one_deleted} can be expressed as a rank-one update:
	\begin{equation}
		\label{eq:rank_one}
		\nys{\mat{A}}{\mat{\Omega}_{-i}} = \mat{Q} \mat{R} \mleft( \mat{H}^{-1} - \frac{\mat{H}^{-1} \mathbf{e}_i^{\vphantom{*}}\evec_i^*\mat{H}^{-1}}{\evec_i^*\mat{H}^{-1}\mathbf{e}_i^{\vphantom{*}}} \mright) \mat{R}^* \mat{Q}^*.
	\end{equation}
	Taking advantage of the rank-one update formula \cref{eq:rank_one}, we can form the \NysTrace estimator using just $m$ matvecs and $\order(m^2N)$ post-processing operations.
	An efficient MATLAB implementation appears in the supplementary materials \cref{list:xnystrace}, which incorporates several additional tricks taken from \cite{LLS+17,TYUC17b} to improve its numerical stability.
	
	\subsection{Normalization of test vectors}
	\label{sec:distribution}
	
	For the best general performance of \textsc{XTrace}\xspace and \NysTrace, we recommend a modification of the basic \textsc{XTrace}\xspace and \NysTrace procedure in which the test vectors are orthogonalized against the low-rank approximation $\mat{Q}_{(i)}^{\vphantom{*}}\mat{Q}_{(i)}^*\mat{A}$ or $\mat{A}\langle \mat{\Omega}_{-i}\rangle$ and renormalized.
    This renormalization strategy proceeds as follows:
	First, we draw the test vectors from a spherically symmetric distribution,
	such as $\vec{\omega} \sim \textsc{normal}( \vec{0}, \Id )$.
	We use these test vectors to form the matrix $\mat{Q}_{(i)}$ or Nystr\"om approximation $\mat{A} \langle \mat{\Omega}_{-i}\rangle$.
	Second, when computing the basic trace estimates, we \textbf{normalize} the test vectors.  In \textsc{XTrace}\xspace, we compute
	$$
	\vec{\mu}_i \coloneqq \bigl(\Id - \mat{Q}_{(i)} \mat{Q}_{(i)}^*\bigr) \vec{\omega}_i
	\quad\text{and}\quad
	\vec{\nu}_i \coloneqq \sqrt{N - \rank(\mat{Q}_{(i)})} \cdot \frac{\vec{\mu}_i}{\norm{ \vec{\mu}_i }_{\ell_2}}.
	$$
	To obtain the $i$th trace estimate, we form
	$$
	\hat{\tr}_{i} \coloneqq \tr\big(\mat{Q}_{(i)}^* (\mat{A}\mat{Q}_{(i)}^{\vphantom{*}})\big)
	+ \vec{\nu}_i^*\big(\mat{A} \vec{\nu}_i^{\vphantom{*}}\big). 
	$$
	For \NysTrace, we set 
	\begin{equation*}
		\mat{P}_{(i)} \coloneqq \orth \mat{\Omega}_{-i}, \:\: \vec{\mu}_i \coloneqq (\Id - \mat{P}_{(i)}\mat{P}_{(i)}^*) \vec{\omega}_i, \:\:\text{and}\:\: \vec{\nu}_i \coloneqq\sqrt{N - \rank(\mat{P}_{(i)})} \cdot \frac{\vec{\mu}_i}{\norm{ \vec{\mu}_i }_{\ell_2}}.
	\end{equation*}
	Then define the basic trace estimates
	\begin{equation*}
		\hat{\tr}_i \coloneqq \tr \mat{A}\langle \mat{\Omega}_{-i}\rangle + \vec{\nu}_i^* \big( \mat{A}\vec{\nu}_i^{\vphantom{*}} \big).
	\end{equation*}
	The normalization removes a source of variance related to the random lengths of the vectors $\vec{\mu}_i$, improving the accuracy compared to unnormalized Gaussian test vectors or uniform random vectors on the sphere.
	We compare this normalization approach against alternative distributions for test vectors in \cref{sec:test_vectors}.
	
	\subsection{Diagonal estimation} \label{sec:xdiag}
	
	In the spirit of Girard and Hutchinson, the paper~\cite{BKS07} of Bekas, Kokiopoulou, and Saad (BKS) develops an estimator for the diagonal of an implicit matrix:
	\begin{equation} \label{eq:hutch_diag}
		\hat{\vec{\diag}}_{\rm BKS} = \frac{\sum_{i=1}^m \vec{\omega}_i \odot (\mat{A}\vec{\omega}_i)}{\sum_{i=1}^m \vec{\omega}_i \odot \vec{\omega}_i}.
	\end{equation}
	Here, $\odot$ denotes the entrywise product and the division is performed entrywise.
	The recent paper~\cite{BN22} proposes a biased estimator for the diagonal, called \DiagPP, that is inspired by BKS and \HutchPP.
	
	We have observed that the exchangeability principle
	leads to an unbiased diagonal estimator with lower variance.
	Our diagonal estimator, \XDiag, takes the form
	\begin{equation*}
		\vec{\hat{\diag}}_{\rm X} = \frac{1}{m/2} \sum_{i=1}^{m/2} \mleft[ \diag\big(\mat{Q}_{(i)}^{\vphantom{*}}(\mat{Q}_{(i)}^*\mat{A})\big) + \frac{\vec{\omega}_i \odot \big(\Id - \mat{Q}_{(i)}^{\vphantom{*}}\mat{Q}_{(i)}^*\big)(\mat{A}\vec{\omega}_i)}{\vec{\omega}_i \odot \vec{\omega}_i} \mright],
	\end{equation*}
	where $\mat{Q}_{(i)}$ is defined in \cref{eq:Q_i}.
	In contrast to \textsc{XTrace}\xspace, the \XDiag estimator requires matvecs with $\mat{A}^*$ in addition to matvecs with $\mat{A}$.
	The same ideas from \cref{sec:xtrace} allow us to implement \XDiag in $\order(m^2N)$ operations; an implementation is provided in the supplementary materials (\cref{list:xdiag}).
	
	\section{Error estimation and adaptive stopping} \label{sec:extensions}
	
	Our exchangeable estimators depend on averaging over a family of basic estimators, and we can reuse the basic estimators to compute a reliable posterior approximation for the error (\cref{sec:error}).
	The error estimate allows us to develop adaptive
	methods for selecting the number $m$ of matvecs to achieve a specified error tolerance
	(\cref{sec:adaptive_stopping}).  
	These refinements are very important for practical
	implementations.
	
	\subsection{Error estimation} \label{sec:error}
	
	The \textsc{XTrace}\xspace and \NysTrace estimators are both formed as averages of individual trace estimates $\hat{\tr}_1,\ldots,\hat{\tr}_\ell$ where
	\begin{align*}
		\hat{\tr}_i &= \tr\big(\mat{Q}_{(i)}^*\mat{A}\mat{Q}_{(i)}^{\vphantom{*}}\big) + \vec{\omega}_i^*\big(\Id-\mat{Q}_{(i)}^{\vphantom{*}}\mat{Q}_{(i)}^*\big) \mat{A}\big(\Id-\mat{Q}_{(i)}^{\vphantom{*}}\mat{Q}_{(i)}^*\big)\vec{\omega}_i^{\vphantom{*}},& &\!\!\ell \coloneqq \frac{m}{2}& &\!\!\!\!\text{(\textsc{XTrace}\xspace);} \\
		\hat{\tr}_i &=\tr (\mat{A}\langle \mat{\Omega}_{-i}\rangle) + \vec{\omega}_i^* (\mat{A} - \mat{A}\langle \mat{\Omega}_{-i}\rangle)\vec{\omega}_i^{\vphantom{*}},& &\!\!\ell \coloneqq m & &\!\!\!\!\text{(\NysTrace).}
	\end{align*}
	The scaled variance of the individual trace estimates $\hat{\tr}_i$
	provides a useful estimate for the squared error in the trace estimate:
	\begin{equation}
		\label{eq:error_est}
		\hat{\err}^2 \coloneqq \frac{1}{\ell(\ell-1)} \sum_{i=1}^\ell \mleft| \hat{\tr}_i - \hat{\tr} \mright|^2 \approx \mleft| \tr(\mat{A}) - \hat{\tr} \mright|^2
		\quad\text{where}\quad
		\hat{\tr} = \frac{1}{\ell} \sum_{i=1}^\ell \hat{\tr}_i.
	\end{equation}
	The next result contains an analysis of this posterior error estimator.
	
	\begin{proposition}[Error estimate] \label{prop:estim_bound}
		The error estimate \cref{eq:error_est} satisfies
		\begin{equation*}
			\expect \hat{\err}^2 = \frac{1-\Cor(\hat{\tr}_1,\hat{\tr}_2)}{1+(\ell-1)\Cor(\hat{\tr}_1,\hat{\tr}_2)} \cdot \expect \mleft|  \tr(\mat{A}) - \hat{\tr} \mright|^2.
		\end{equation*}
		We have written $\Cor(\cdot, \cdot)$ for the correlation of two random variables.
	\end{proposition}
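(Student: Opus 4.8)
The plan is to compute $\expect\hat{\err}^2$ directly from the definition in \cref{eq:error_est} and to relate it to $\expect|\tr(\mat{A})-\hat{\tr}|^2$ using only two structural facts: (i) the basic estimators $\hat{\tr}_1,\dots,\hat{\tr}_\ell$ are exchangeable and each is unbiased for $\tr\mat{A}$, so they share a common mean $\tau \coloneqq \tr\mat{A}$, a common variance $v \coloneqq \Var(\hat{\tr}_i)$, and a common pairwise covariance $c \coloneqq \Cov(\hat{\tr}_i,\hat{\tr}_j)$ for $i\neq j$; and (ii) $\hat{\tr}$ is the sample average of the $\hat{\tr}_i$. Exchangeability follows because the test vectors $\vec{\omega}_1,\dots,\vec{\omega}_\ell$ are iid (hence exchangeable) and the map $(\vec{\omega}_1,\dots,\vec{\omega}_\ell)\mapsto\hat{\tr}_i$ is obtained from a fixed function by relabeling indices, so permuting the $\vec{\omega}$'s permutes the $\hat{\tr}_i$'s in distribution.

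First I would expand the numerator $\sum_{i=1}^\ell|\hat{\tr}_i-\hat{\tr}|^2 = \sum_i|\hat{\tr}_i-\tau|^2 - \ell|\hat{\tr}-\tau|^2$, the standard bias--variance split of a sum of squared deviations about the sample mean. Taking expectations gives $\expect\sum_i|\hat{\tr}_i-\hat{\tr}|^2 = \ell v - \ell\,\expect|\hat{\tr}-\tau|^2$. Next I would compute $\expect|\hat{\tr}-\tau|^2 = \Var(\hat{\tr})$ by expanding the average: $\Var\bigl(\tfrac1\ell\sum_i\hat{\tr}_i\bigr) = \tfrac1{\ell^2}\bigl(\ell v + \ell(\ell-1)c\bigr) = \tfrac{v + (\ell-1)c}{\ell}$. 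Substituting, $\expect\sum_i|\hat{\tr}_i-\hat{\tr}|^2 = \ell v - \bigl(v+(\ell-1)c\bigr) = (\ell-1)(v-c)$. Dividing by $\ell(\ell-1)$ as in \cref{eq:error_est} yields $\expect\hat{\err}^2 = (v-c)/\ell$.

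Then I would rewrite both $(v-c)/\ell$ and $\expect|\tr(\mat{A})-\hat{\tr}|^2 = (v+(\ell-1)c)/\ell$ in terms of the correlation $\rho \coloneqq \Cor(\hat{\tr}_1,\hat{\tr}_2) = c/v$ (valid since $v>0$ in the nondegenerate case; the degenerate case $v=0$ forces both sides to be zero and the identity holds trivially if one interprets the ratio appropriately). Factoring out $v$: $\expect\hat{\err}^2 = \tfrac{v(1-\rho)}{\ell}$ and $\expect|\tr(\mat{A})-\hat{\tr}|^2 = \tfrac{v(1+(\ell-1)\rho)}{\ell}$. Dividing the first by the second cancels $v/\ell$ and leaves exactly the claimed factor $\tfrac{1-\rho}{1+(\ell-1)\rho}$, so $\expect\hat{\err}^2 = \tfrac{1-\Cor(\hat{\tr}_1,\hat{\tr}_2)}{1+(\ell-1)\Cor(\hat{\tr}_1,\hat{\tr}_2)}\cdot\expect|\tr(\mat{A})-\hat{\tr}|^2$.

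The argument is essentially a bookkeeping exercise once exchangeability is in hand, so there is no deep obstacle; the only point requiring care is the justification that $\hat{\tr}_1,\dots,\hat{\tr}_\ell$ are genuinely exchangeable (so that a single common covariance $c$ and correlation $\rho$ are well defined), and the edge case $v=0$ where the correlation is undefined. I would dispatch the exchangeability point with the one-line relabeling argument above and either note the $v=0$ case separately or simply assume $v>0$, since $\Cor$ already presupposes nonzero variances.
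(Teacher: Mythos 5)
Your proof is correct and follows essentially the same route as the paper: both arguments use unbiasedness and exchangeability of the $\hat{\tr}_i$ to reduce $\expect\hat{\err}^2$ to $(v-c)/\ell$ and $\Var(\hat{\tr})$ to $(v+(\ell-1)c)/\ell$, then take the ratio and rewrite in terms of the correlation $\rho = c/v$. Your bias--variance split of $\sum_i|\hat{\tr}_i-\hat{\tr}|^2$ is the same ``short calculation'' the paper compresses into a double-sum identity, and your explicit note on the degenerate case $v=0$ is a small but welcome addition.
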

	
	The proof and some additional discussion of the correlation $\Cor(\hat{\tr}_1,\hat{\tr}_2)$  appears in \cref{sec:estim_bound}.  In practice, we find that the individual trace estimators have a small positive correlation, so we typically \emph{underestimate} the true error by a small amount.  Thus, the posterior error estimate
	is a valuable tool.  For an illustration, see \cref{fig:tfim_errs} in \cref{sec:quantum}.
	
	\subsection{Adaptive stopping} \label{sec:adaptive_stopping}
	
	In practice, we often wish to choose the number $m$ of matvecs adaptively
	to estimate $\tr \mat{A}$ up to a prescribed accuracy level:
	\begin{equation*}
		|\,{\tr(\mat{A}) - \hat{\tr}}\,| \leq \varepsilon \cdot |\,{\tr\mat{A}}\,|
		\quad\text{for $\epsilon \in (0,1)$.}
	\end{equation*}
	One simple way to achieve this tolerance is through a \emph{doubling strategy}:
	\begin{enumerate}
		\item To initialize, collect $m_0$ matvecs $\mat{A}\vec{\omega}_1,\ldots,\mat{A}\vec{\omega}_{m_0}$, and  set $j \leftarrow 0$.
		\item Use $\mat{A}\vec{\omega}_1,\ldots,\mat{A}\vec{\omega}_{m_j}$
		to form a trace estimate $\hat{\tr}^{(j)}$ and an error estimate $\hat{\err}^{(j)}$.
		\item If $\hat{\err}^{(j)} \leq \varepsilon \cdot |\hat{\tr}^{(j)}|$, then stop.
		\item Collect $m_j$ additional matvecs $\mat{A}\vec{\omega}_{m_j+1},\ldots,\mat{A}\vec{\omega}_{2m_j}$.
		Set $j \leftarrow j+1$ and $m_j \leftarrow 2m_{j-1}$. Go to step 2.
	\end{enumerate}
	The doubling strategy requires at most twice the optimal number of matvecs to meet the tolerance and maintains the $\order(m^2N)$ computational cost of \textsc{XTrace}\xspace and \NysTrace.
	We implement this approach in our experiments to produce \cref{fig:tfim_energies}.
	
	\section{Numerical experiments} \label{sec:experiments}
	
	This section presents a numerical evaluation of \textsc{XTrace}\xspace, \NysTrace, and \XDiag.
	\Cref{sec:comparisons} compares different trace estimators on synthetic matrices, \cref{sec:test_vectors} evaluates \NysTrace with different distributions for the test vectors,
	\cref{sec:quantum} applies \textsc{XTrace}\xspace and \NysTrace to computations in quantum statistical physics, and \cref{sec:networks} applies \XDiag to computations in network science.
    Throughout this section, we compare different trace estimators based on the error they achieve for a given budget of matvecs; experiments comparing the runtime of different trace estimators is provided in \cref{sec:time}.
	Code to reproduce the experiments can be found at \url{https://github.com/eepperly/XTrace}.
	
	\subsection{Comparison of trace estimators}
	\label{sec:comparisons}
	
	The first experiment is designed to compare the accuracy of six trace estimators that each use $m$ matvecs:
	\begin{itemize}
		\item \Hutch: The Girard--Hutchinson estimator \cref{eq:girard_hutchinson}.
		\item \LRA: The Saibaba et al.~\cite{SAI17} estimator (without additional subspace iteration): $\tr\mat{\hat{A}}$ where $\mat{\hat{A}} = \mat{Q}\mat{Q}^*\mat{A}$ and $\mat{Q} = \orth(\mat{A}\mat{\Omega})$.
		\item The \HutchPP estimator \cref{eq:hutch++}.
		\item The \NysPP estimator \cite{PCK22}.
		We use the implementation provided by the authors of \cite{PCK22}, modified to the test vector $\vec{\omega} \sim \textsc{uniform} \{\pm 1\}^N$.
		\item The \textsc{XTrace}\xspace estimator \cref{eq:xtrace}.
		\item The \NysTrace estimator \cref{eq:xnystrace}.
	\end{itemize}
	To create a fair comparison, we apply all six estimators using a test matrix whose entries are uniformly random signs: $\mat{\Omega} \sim \textsc{uniform}\{\pm 1\}^{N \times \ell}$, as was used in \HutchPP.
	The additional benefit for \textsc{XTrace}\xspace and \NysTrace of using normalized, spherically symmetric test vectors is explored in \cref{sec:test_vectors}.
	The supplementary materials (\cref{sec:compare_adapt}) contain additional comparisons with the adaptive \HutchPP algorithm of~\cite{PCK22}; this comparison requires a more complicated experimental setup.
	
	We apply each of these estimators to randomly generated matrices of the form
	\begin{equation*}
		\mat{A}(\vec{\lambda}) = \mat{U}\diag(\vec{\lambda})\mat{U}^*
	\end{equation*}
	where $\mat{U}$ is a Haar random orthogonal matrix. We use four choices for the eigenvalues $\vec{\lambda}$:
	\begin{itemize}
		\item \texttt{flat}: $\vec{\lambda} = (3 - 2(i-1)/(N-1) : i = 1,2,\ldots,N)$.
		\item \texttt{poly}: $\vec{\lambda} = (i^{-2} : i = 1,2,\ldots,N)$.
		\item \texttt{exp}: $\vec{\lambda} = (0.7^i : i = 0,1,\ldots,N-1)$.
		\item \texttt{step}: $\vec{\lambda} = (\underbrace{1,\ldots,1}_{\text{$50$ times}},\underbrace{10^{-3},\ldots,10^{-3}}_{\text{$N-50$ times}})$.
	\end{itemize}
	We fix the matrix dimension $N = 1000$, and we report the relative error $|{\tr(\mat{A})} - {\hat{\tr}}| / \tr(\mat{A})$ averaged over $1000$ trials.
	
	\begin{figure}[t]
		\centering
		\begin{subfigure}[b]{0.48\textwidth}
			\centering
			\includegraphics[width=\textwidth]{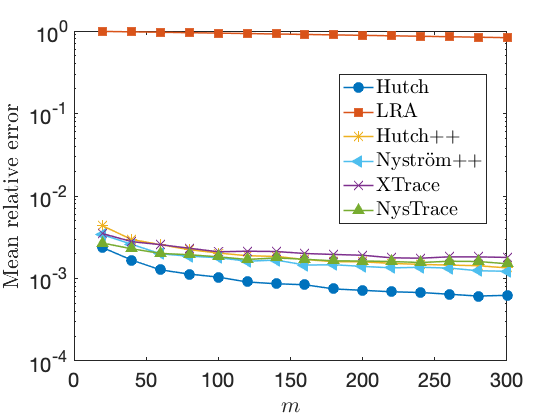}
			\caption{\texttt{flat}} \label{fig:flat}
		\end{subfigure}
		~
		\begin{subfigure}[b]{0.48\textwidth}
			\centering
			\includegraphics[width=\textwidth]{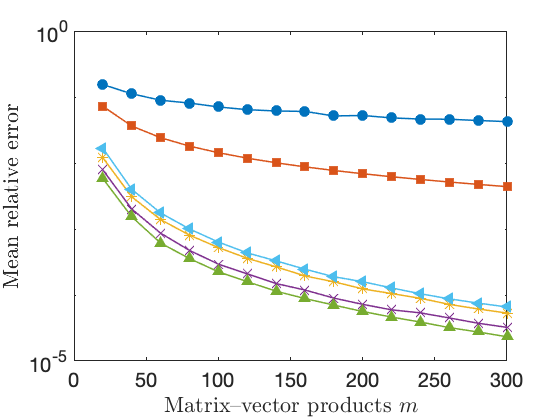}
			\caption{\texttt{poly}} \label{fig:poly}
		\end{subfigure}
		
		\begin{subfigure}[b]{0.48\textwidth}
			\centering
			\includegraphics[width=\textwidth]{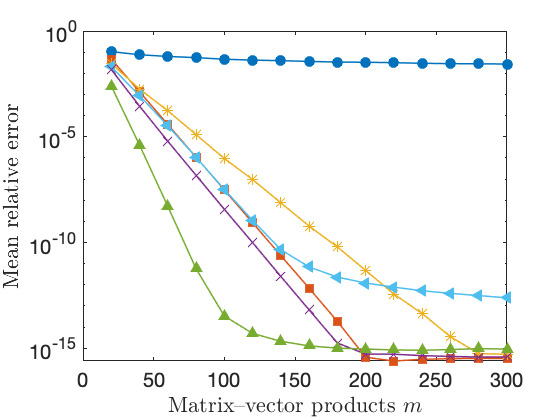}
			\caption{\texttt{exp}} \label{fig:exp}
		\end{subfigure}
		\begin{subfigure}[b]{0.48\textwidth}
			\centering
			\includegraphics[width=\textwidth]{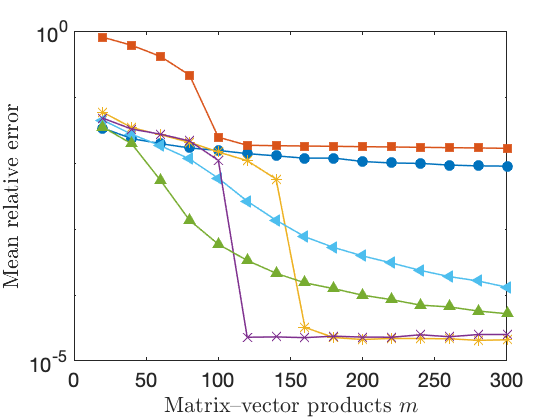}
			\caption{\texttt{step}} \label{fig:step}
		\end{subfigure}
		\caption{\textbf{Synthetic instances.}  Average relative error of trace estimators for matrices with particular spectral profiles using random sign test vectors. See~\cref{sec:comparisons} and \cref{fig:test_vectors}.} \label{fig:comparison}
	\end{figure}
	
	\textbf{Discussion.}
	The variance-reduced trace estimators dramatically outperform \Hutch, except on the \texttt{flat} instance (\cref{fig:flat}). 
	The implication is that \Hutch only makes sense when estimating the trace of a matrix with a nearly flat spectrum.
	For the \texttt{flat} instance, the performance of \LRA is especially poor because \LRA is a biased estimator
	that substantially underestimates the trace.
	
	Across all the instances, \textsc{XTrace}\xspace produces smaller errors than \HutchPP, sometimes by orders of magnitude.
	For the \texttt{exp} instance (\cref{fig:exp}), the error of \textsc{XTrace}\xspace decays exponentially fast
	at a rate $1.5\times$ faster than \HutchPP.
	The superiority of \textsc{XTrace}\xspace is also visible for the \texttt{step} instance  (\cref{fig:step})
	where \textsc{XTrace}\xspace achieves accuracy $10^{-4}$ with just $m \approx 120$ matvecs, as compared to $m \approx 160$ for \HutchPP.
	
	\NysTrace is frequently the most accurate of the trace estimation methods.
	For the \texttt{exp} instance (\cref{fig:exp}), \NysTrace converges at a rate $2\times$ faster than \textsc{XTrace}\xspace and \NysPP and $3\times$ faster than \HutchPP.
	However, \NysTrace (and \NysPP) can exhibit poor performance for matrices that possess a long tail of slowly decreasing eigenvalues (see the \texttt{step} instance in \cref{fig:step}).
	To understand this phenomenon, observe that the error bounds for \NysTrace
	depend on the trace/nuclear norm, which is sensitive to slow eigenvalue decay (\cref{thm:main}).  We can improve the performance by using the normalization approach of \cref{sec:distribution}, as we detail in the next section.
	
	\subsection{Choice of test vectors}
	\label{sec:test_vectors}
	
	In \cref{sec:distribution}, we recommended an implementation of \textsc{XTrace}\xspace and \NysTrace that uses rotationally invariant test vectors for low-rank approximation and normalizes the distinguished test vector used for trace estimation.
	
	\Cref{fig:test_vectors} shows how this method can improve over estimators that lack the normalization step.
    The figure compares the Girard--Hutchinson estimator, \HutchPP, \textsc{XTrace}\xspace with normalized test vectors, and \textsc{XTrace}\xspace with test vectors from the standard normal distribution $\vec{\omega} \sim \textsc{normal}(\vec{0}, \Id)$, the random sign distribution $\vec{\omega} \sim \textsc{uniform} \{\pm1\}^N$, or the uniform distribution on the sphere $\vec{\omega} \sim \textsc{uniform} \{ \vec{x} \in \real^N : \norm{\vec{x}}_{\ell_2} = \sqrt{N} \} $.
	The differences between \textsc{XTrace}\xspace test vectors are only visible for matrices whose spectra have flat segments, as in the \texttt{flat} and \texttt{step} examples.
	For these examples, the normalization strategy is conspicuously the best, followed by the uniform sign and uniform sphere distributions, with the standard normal distribution lagging well behind.
	
	\begin{figure}[t]
		\centering
		\begin{subfigure}[b]{0.48\textwidth}
			\centering
			\includegraphics[width=\textwidth]{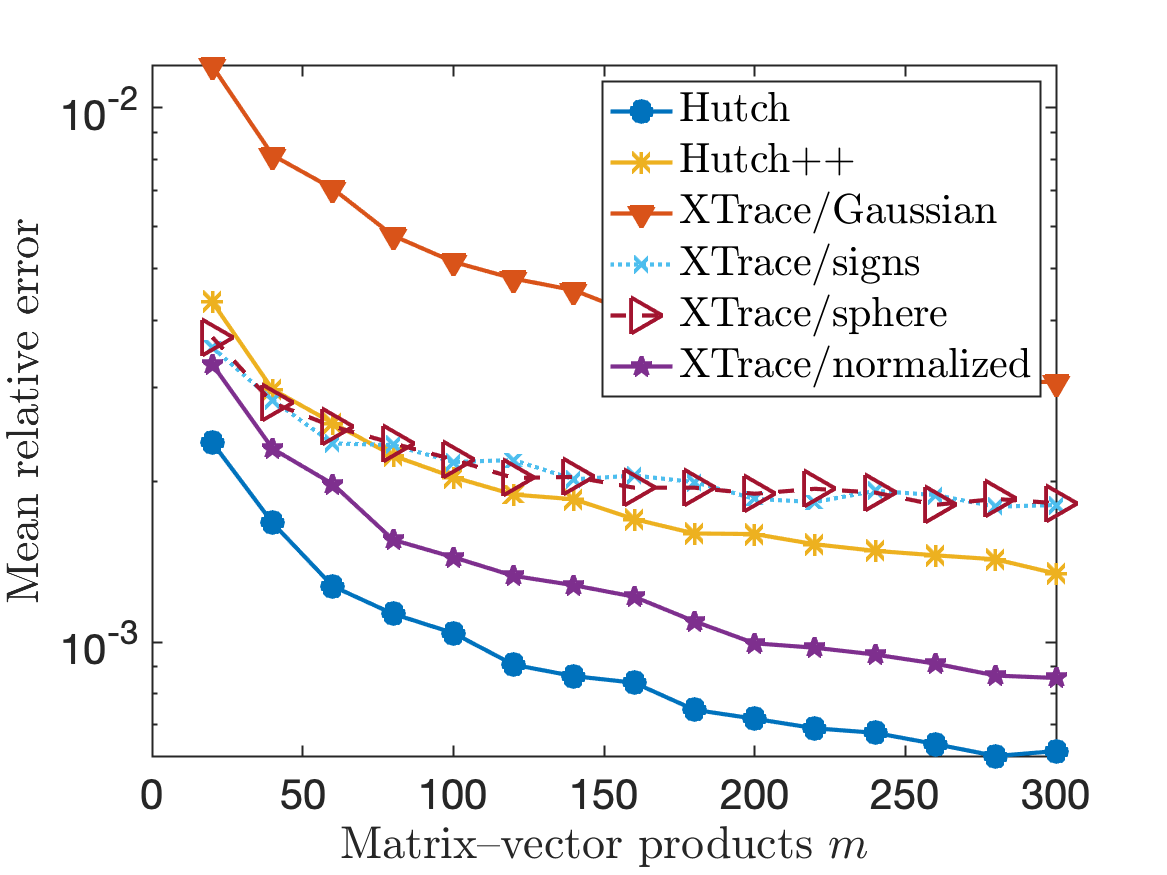}
			\caption{\texttt{flat}} \label{fig:flat_test}
		\end{subfigure}
		~
		\begin{subfigure}[b]{0.48\textwidth}
			\centering
			\includegraphics[width=\textwidth]{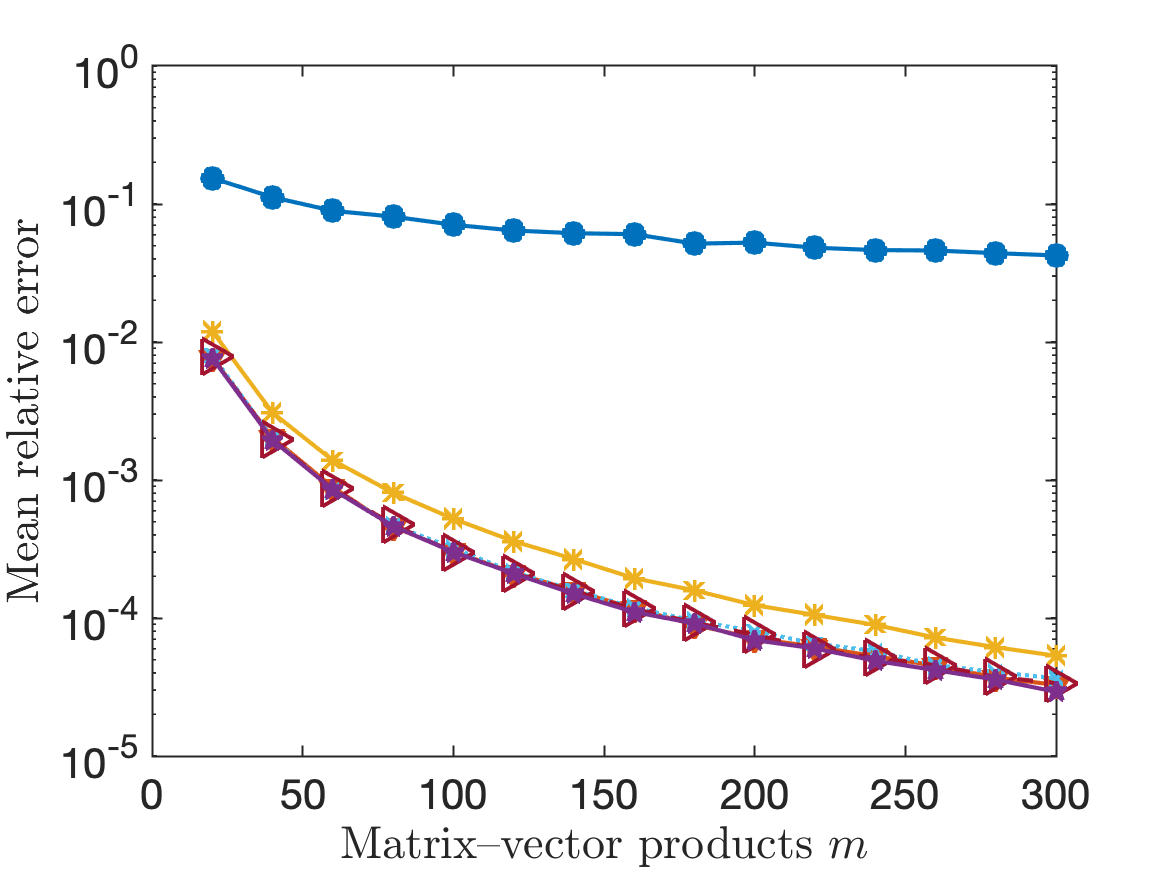}
			\caption{\texttt{poly}} \label{fig:poly_test}
		\end{subfigure}
		
		\begin{subfigure}[b]{0.48\textwidth}
			\centering
			\includegraphics[width=\textwidth]{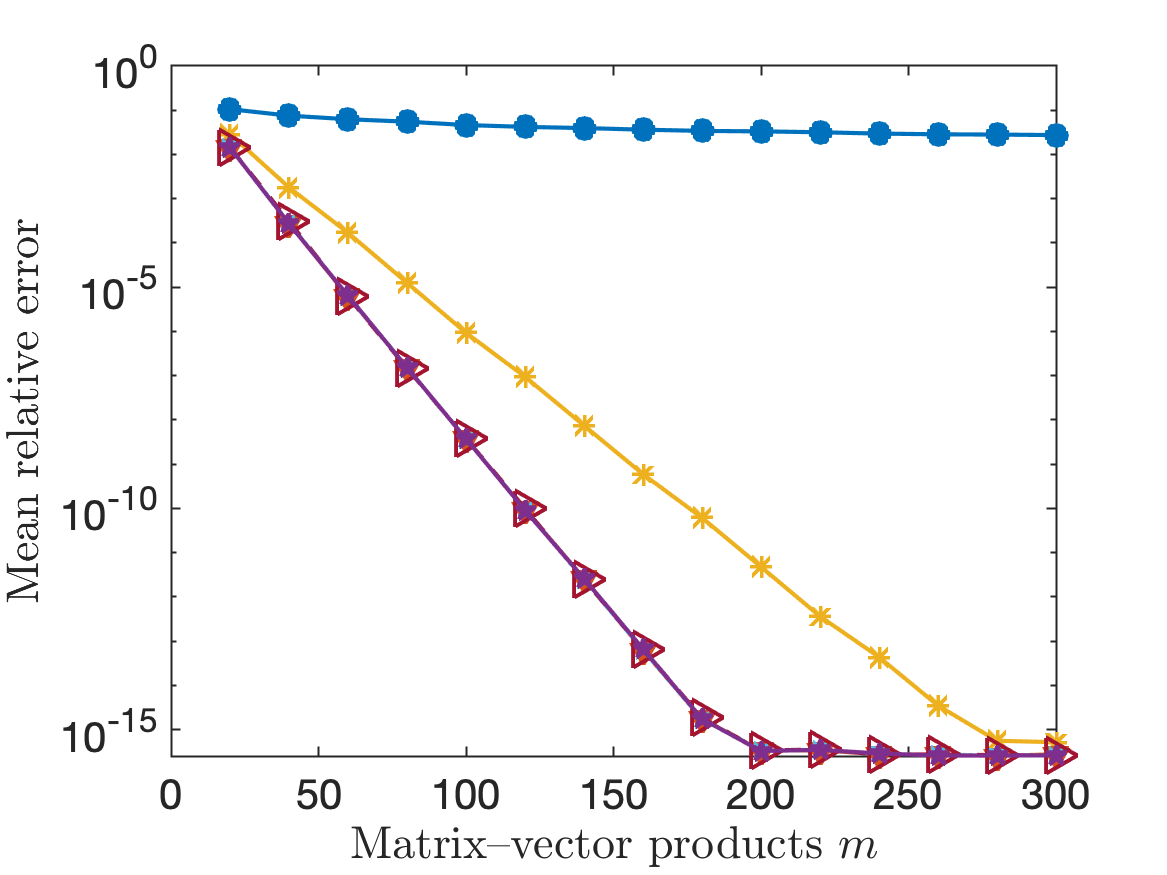}
			\caption{\texttt{exp}} \label{fig:exp_test}
		\end{subfigure}
		\begin{subfigure}[b]{0.48\textwidth}
			\centering
			\includegraphics[width=\textwidth]{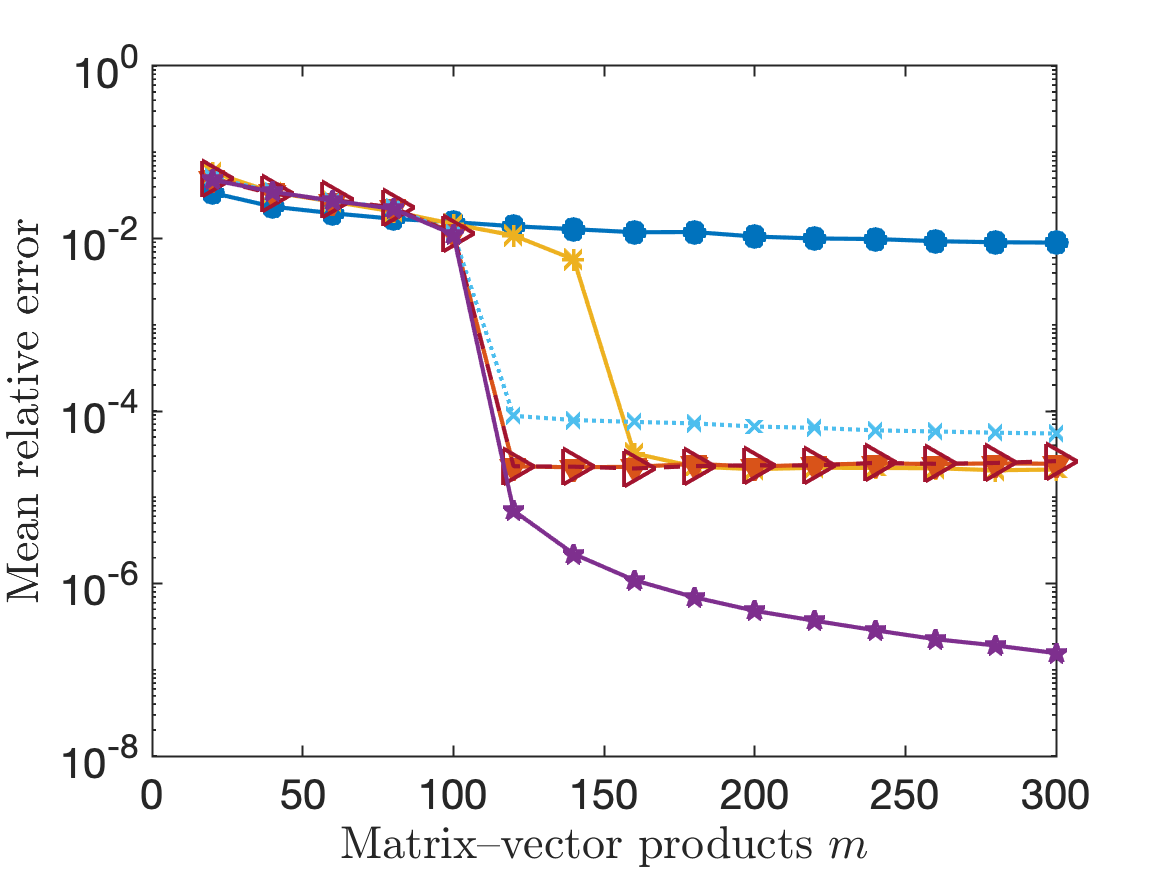}
			\caption{\texttt{step}} \label{fig:step_test}
		\end{subfigure}
		\caption{\textbf{Normalization of test vectors.} The average relative error of the \textsc{XTrace}\xspace estimator using normalized test vectors (\cref{sec:distribution}) as compared with alternative (unnormalized) test vector distributions. See~\cref{sec:test_vectors}.} \label{fig:test_vectors}
	\end{figure}
	
	\subsection{Application: Quantum statistical mechanics}
	\label{sec:quantum}
	
	Our next experiment shows the benefits of using \textsc{XTrace}\xspace and \NysTrace
	for an application in quantum physics.  To compute a
	phase diagram, we must evaluate a large number of trace estimators.
	Our exchangeable estimators
	reduce the number of matvecs needed to achieve a desired tolerance,
	and we can use the posterior error estimator to adaptively
	determine the minimum number $m$ of matvecs.
	
	The average energy of a quantum system with a symmetric Hamiltonian matrix $\mat{H}\in\real^{N\times N}$ at inverse temperature $\beta > 0$ is
	\begin{equation*} \label{eq:average_energy}
		E(\beta) = \frac{1}{Z(\beta)} \tr [\mat{H}\exp(-\beta \mat{H})] \quad \text{where $Z(\beta) = \tr \exp(-\beta \mat{H})$}.
	\end{equation*}
	The quantity $Z(\beta)$ is the partition function, introduced in \cref{sec:numerical_experiments}.
	We observe that $\tr [\mat{H}\exp(-\beta \mat{H})]$ and $\tr\exp(-\beta\mat{H})$ are ideal candidates for estimation using \textsc{XTrace}\xspace and \NysTrace, since the matrix exponential leads to rapidly decaying eigenvalues.
	
	We apply \textsc{XTrace}\xspace and \NysTrace to compute the partition function and energy for the transverse field Ising model (TFIM) for a periodic 1D chain \cite{Pfe70}, which is specified by the Hamiltonian matrix
	\begin{equation} \label{eq:tfim}
		\mat{H} = -\sum_{i=1}^n \mat{Z}_i \mat{Z}_{i+1} - h \sum_{i=1}^n \mat{X}_i \in \real^{2^n\times 2^n}.
	\end{equation}
	Here, $\mat{X}_i$ and $\mat{Z}_i$ denote Pauli operators acting on the $i$th site; that is, 
	\begin{align*}
		\mat{X}_i = \Id_{2\times 2}^{\otimes (n-i-1)} \otimes \twobytwo{0}{1}{1}{0} \otimes \Id_{2\times 2}^{\otimes (n-i)},\quad \mat{Z}_i = \Id_{2\times 2}^{\otimes (n-i-1)} \otimes \twobytwo{1}{0}{0}{-1} \otimes \Id_{2\times 2}^{\otimes (n-i)},
	\end{align*}
	and $\mat{Z}_{n+1} = \mat{Z}_1$ by periodicity.
	The eigenvalues of $\mat{H}$ are known exactly \cite[eqs.~(16)--(17)]{Lit19}, which allows us to precisely evaluate the error of stochastic estimates of $Z(\beta) = Z(\beta, h)$ and $E(\beta) = E(\beta, h)$.
	Before applying stochastic trace estimation, we shift the Hamiltonian matrix by a constant $b = (1 + h)n$ so that $\mat{H} + b \Id$ is positive semidefinite.
	
	\begin{figure}[t]
		\centering
		\begin{subfigure}{0.39\textwidth}
			\includegraphics[width=0.99\textwidth]{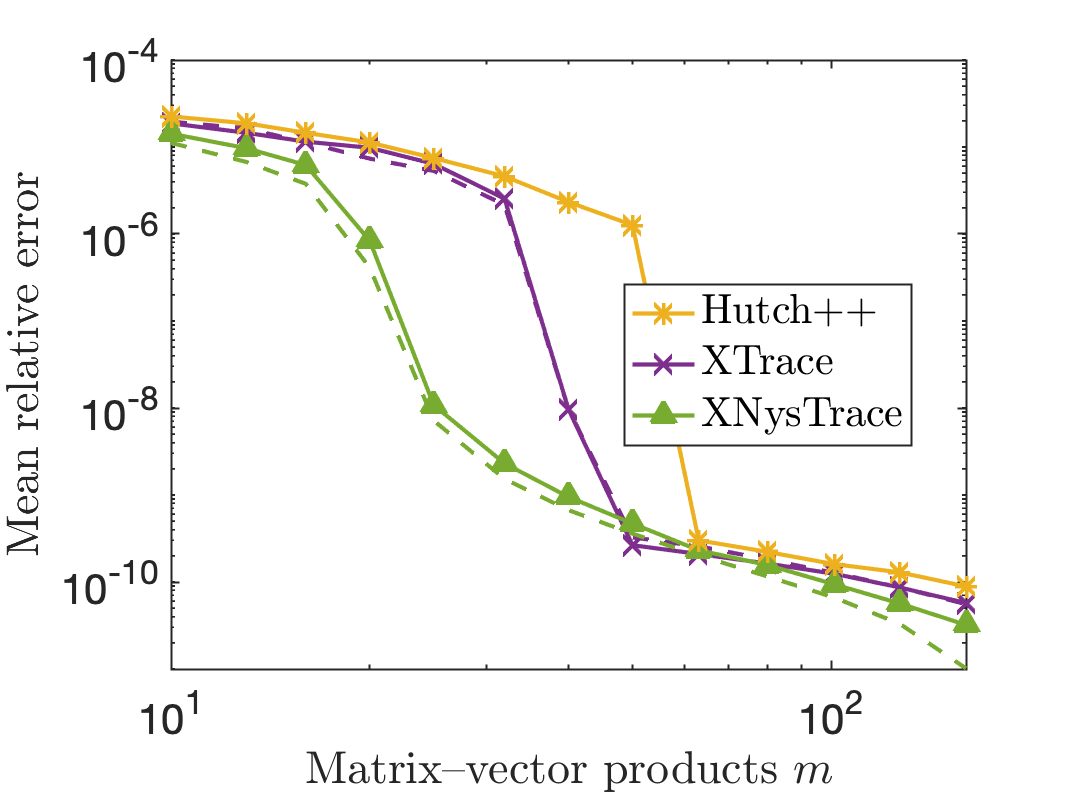}
			\caption{}\label{fig:tfim_errs}
		\end{subfigure}
		~
		\begin{subfigure}{0.58\textwidth}
			\includegraphics[width=0.99\textwidth]{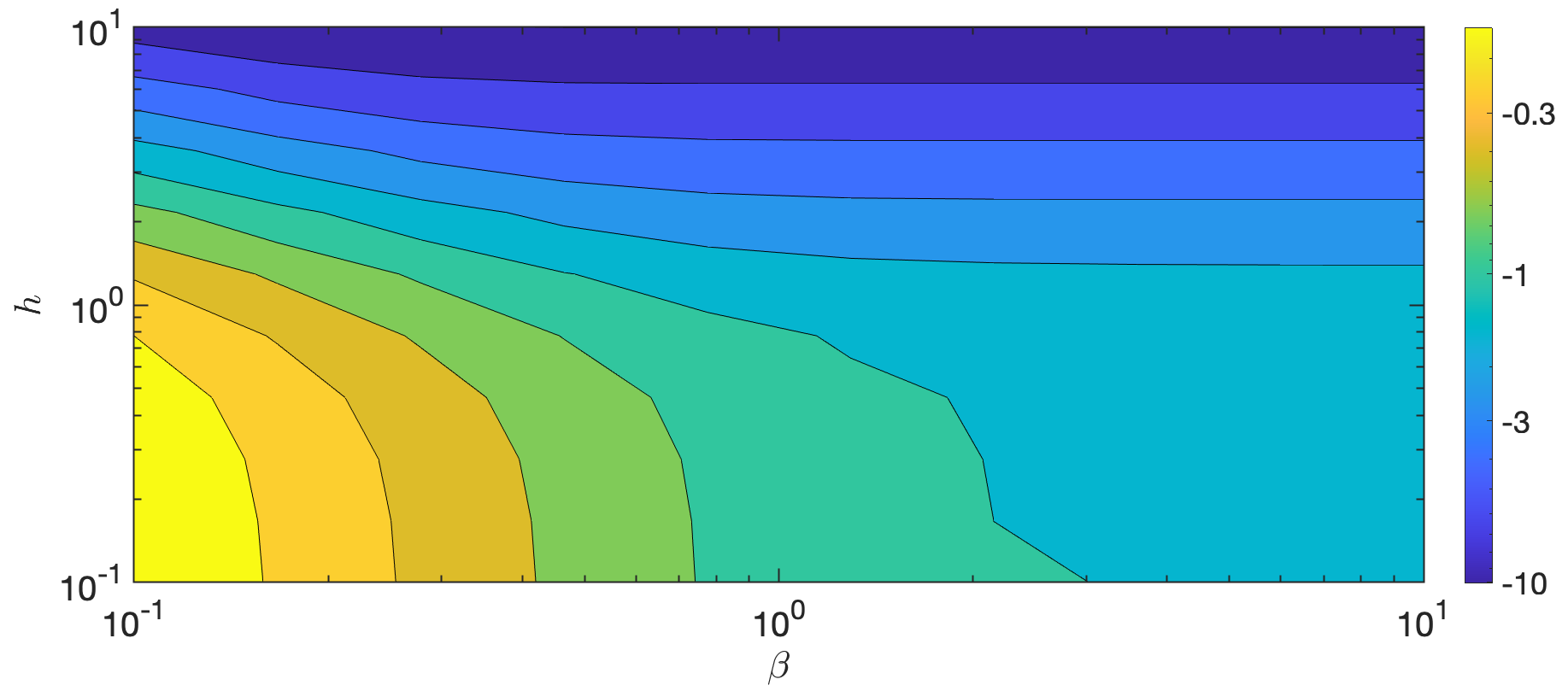}
			\caption{}\label{fig:tfim_energies}
		\end{subfigure}
		\caption{\textbf{Quantum statistical mechanics.}  \textit{Left}:
			Mean relative error (solid lines) and mean posterior error estimates (dashed lines) when computing the partition function $Z(\beta = 0.6, h = 10)$.
			\textit{Right}: Average energy $E(\beta,h)/n$ per site for $\beta,h \in [10^{-1},10^1]$ as computed by \NysTrace. See~\cref{sec:quantum}.}
		\label{fig:tfim}
	\end{figure}
	
	\Cref{fig:tfim_errs} shows the errors of \HutchPP, \textsc{XTrace}\xspace, and \NysTrace when computing the partition function $Z(\beta, h)$ of the TFIM with $n = 18$, $h = 10$, and $\beta = 0.6$; this is the same setting as in \cref{fig:tfim_single_shot}.
    (The Girard--Hutchinson estimator is omitted because the error is four orders of magnitude larger than the other methods.)
	The thick lines indicate the average errors over 10 trials, while the dashed lines (for \textsc{XTrace}\xspace and \NysTrace) indicate the average error estimates introduced in \cref{sec:error}.
	We observe that the error estimates closely track the true errors, differing by a factor of at most $3.2$.
	
	\Cref{fig:tfim_energies} shows the average energy $E(\beta,h)/n$ per site for parameters $\beta,h \in [10^{-1},10^1]$, up to a relative error of $10^{-3}$.
	We compute the energy by using \NysTrace, together with the doubling strategy from \cref{sec:adaptive_stopping}.
	To ensure the robustness of the doubling strategy, we use a slightly stricter tolerance $\varepsilon = 10^{-4}$ than our desired accuracy of $10^{-3}$.
	
	\subsection{Application: Networks}
	\label{sec:networks}
	
	One of the basic problems in network science is to measure the centrality of each node in a graph.
	We focus on two centrality measures, which can be defined in terms of the adjacency matrix $\mat{M}$:
	
	\begin{enumerate}
		\item The \textit{number of triangles} \cite{AD18} incident on node $i$ is given by $\triangle_i(\mat{M}) = \tfrac{1}{2} (\mat{M}^3)_{ii}$.
		\item The \textit{subgraph centrality} \cite{Est22} of node $i$ is defined as $\mathrm{SC}_i \coloneqq (\exp(\mat{M}))_{ii}$.
	\end{enumerate}
	Both centrality measures are the diagonal entries of functions of the adjacency matrix.
	
	Using the BKS, \DiagPP, and \XDiag diagonal estimators, we estimate these centrality measures for the protein--protein interaction network $\mat{M} \in \real^{2361\times 2361}$ for budding yeast \cite{BZC+03}, available in the SuiteSparse collection \cite{DH11}.
	(Following \cite{BN22}, \DiagPP is omitted for the triangle problem because $\mat{M}^3/2$ is not psd.)
    The matrix $\mat{M}^3/2$ has modest spectral decay ($\sigma_1/\sigma_{100} = 40$) and the matrix $\exp(\mat{M})$ has significant spectral decay ($\sigma_1/\sigma_{100} = 10^6$).
	We evaluate the quality of our estimates using the relative $\ell_\infty$ error
	\begin{equation*}
		\operatorname{error}(\vec{\hat{\diag}}) \coloneqq \frac{\max_{1\le i \le N}|a_{ii} - \hat{\diag}_i|}{\max_{1\le i \le N}|a_{ii}|},
	\end{equation*}
	averaged over 1000 trials.
	\Cref{fig:network_diag} shows the results.
	For the subgraph centrality problem
	after $m = 200$ matvecs, \XDiag is more accurate than BKS by five orders of magnitude and more accurate than \DiagPP by three orders of magnitude.
	
	\begin{figure}[t]
		\centering
		\begin{subfigure}[b]{0.48\textwidth}
			\centering
			\includegraphics[width=\textwidth]{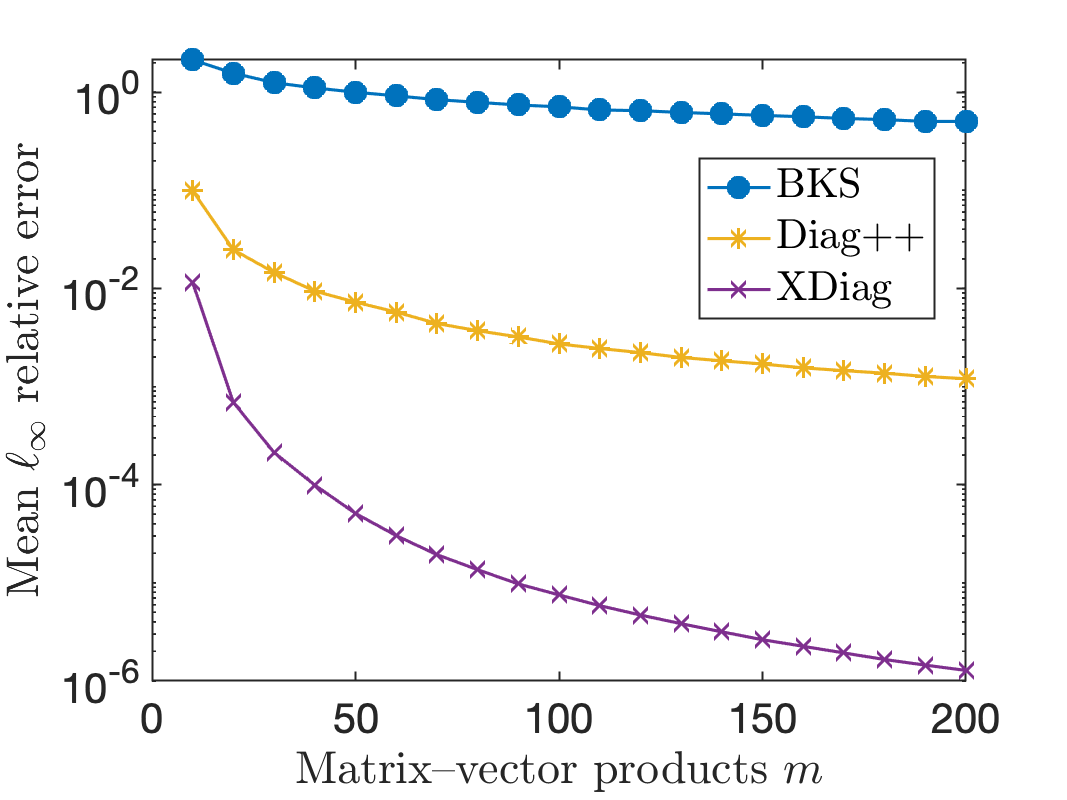}
			\caption{Subgraph centralities $\operatorname{SC}_i(\mat{M})$} \label{fig:estrada_diag}
		\end{subfigure}
		~
		\begin{subfigure}[b]{0.48\textwidth}
			\centering
			\includegraphics[width=\textwidth]{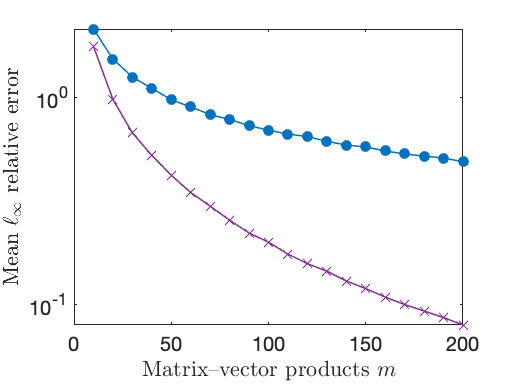}
			\caption{Numbers of triangles $\triangle_i(\mat{M})$} \label{fig:triangles_diag}
		\end{subfigure}
		\caption{\textbf{Networks.}  Mean error of BKS diagonal estimator and \XDiag for the subgraph centralities (\emph{left}) and triangle numbers (\emph{right}) for the \texttt{yeast} graph.  See~\cref{sec:networks}.} \label{fig:network_diag}
	\end{figure}
	
	\section{Theoretical analysis} 
	\label{sec:proofs}
	
	In this final section, we prove \cref{thm:main}, which provides refined error bounds for three trace estimators, and we prove \cref{prop:estim_bound}, which describes the behavior of the posterior error estimator.
	
	\subsection{\textsc{XTrace}\xspace variance bound} \label{sec:general_error}
	
	To begin, let us establish an initial variance bound for the \textsc{XTrace}\xspace estimator.
	This result shows that the variance depends on the error in a low-rank approximation
	of the input matrix.  Later, we will bound these errors using standard results
	for the randomized SVD.
	
	\begin{proposition}[\textsc{XTrace}\xspace error] \label{prop:x}
		Fix $\mat{A} \in \real^{N \times N}$, and consider the \textsc{XTrace}\xspace estimator $\hat{\tr}_{\rm X}$ defined in~\cref{eq:xtrace} with a test matrix $\mat{\Omega} = \begin{bmatrix} \vec{\omega}_1 & \dots & \vec{\omega}_{m/2} \end{bmatrix}$ consisting of $m/2$ standard normal test vectors.
		The estimator is unbiased: $\expect \hat{\tr}_{\rm X} = \tr \mat{A}$.  Moreover, the variance satisfies
		\begin{align*}
			\Var[\hat{\tr}_{\rm X}]
			= \mathbb{E}\, \bigl| \hat{\tr}_{\rm X} - \tr \mat{A} \bigr|^2
			&\leq \frac{2}{m/2} \mathbb{E} \bigl \lVert \big(\Id-\mat{Q}_{(1)} \mat{Q}_{(1)}^*\big) \mat{A} \rVert^2_{\rm F}
			+ 4\, \mathbb{E}\, \bigl\lVert \big(\Id-\mat{Q}_{(12)} \mat{Q}_{(12)}^*\big) \mat{A} \bigr\rVert^2,
		\end{align*}
		where $\mat{Q}_{(i)} = \orth(\mat{A}\mat{\Omega}_{-i})$ and $\mat{Q}_{(ij)} = \orth(\mat{A}\mat{\Omega}_{-ij})$.
	\end{proposition}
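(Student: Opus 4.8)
\emph{Proof plan.} The plan is to combine a conditioning argument with the classical variance identity for exchangeable summands. Write $\mat{P}_i = \mat{Q}_{(i)}\mat{Q}_{(i)}^*$ and $\mat{B}_i = (\Id - \mat{P}_i)\mat{A}(\Id - \mat{P}_i)$. Using $(\Id - \mat{P}_i)^2 = \Id - \mat{P}_i$, cyclicity of the trace, and the splitting $\tr \mat{A} = \tr(\mat{P}_i \mat{A}) + \tr \mat{B}_i$, the $i$th basic estimator in \cref{eq:trX-i} collapses to $\hat{\tr}_i - \tr \mat{A} = \vec{\omega}_i^* \mat{B}_i \vec{\omega}_i - \tr \mat{B}_i$. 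Since $\mat{P}_i$ is a function of $\mat{\Omega}_{-i}$ while $\vec{\omega}_i$ is independent and isotropic, conditioning on $\mat{\Omega}_{-i}$ gives $\expect[\hat{\tr}_i \mid \mat{\Omega}_{-i}] = \tr \mat{A}$, so $\hat{\tr}_{\rm X}$ is unbiased. Because that conditional mean is the constant $\tr \mat{A}$, the law of total variance collapses the variance of one basic estimator to $\Var[\hat{\tr}_1] = \expect[\Var[\vec{\omega}_1^*\mat{B}_1\vec{\omega}_1 \mid \mat{\Omega}_{-1}]]$. I would then invoke the Gaussian identity $\Var[\vec{g}^*\mat{M}\vec{g}] = \norm{\mat{M}}_{\rm F}^2 + \tr(\mat{M}^2) \le 2\norm{\mat{M}}_{\rm F}^2$ together with $\norm{\mat{B}_1}_{\rm F} \le \norm{(\Id - \mat{P}_1)\mat{A}}_{\rm F}$ (right-multiplication by an orthogonal projector does not increase the Frobenius norm) to conclude $\Var[\hat{\tr}_1] \le 2\,\expect\norm{(\Id - \mat{Q}_{(1)}\mat{Q}_{(1)}^*)\mat{A}}_{\rm F}^2$.

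Since the test vectors are iid, the basic estimators are exchangeable, so with $\ell = m/2$ one has $\Var[\hat{\tr}_{\rm X}] = \ell^{-1}\Var[\hat{\tr}_1] + (1 - \ell^{-1})\,\Cov[\hat{\tr}_1, \hat{\tr}_2]$, and it remains to prove $\Cov[\hat{\tr}_1, \hat{\tr}_2] \le 4\,\expect\norm{(\Id - \mat{Q}_{(12)}\mat{Q}_{(12)}^*)\mat{A}}^2$ (the prefactor $1 - \ell^{-1} \le 1$ then leaves the bound valid regardless of the sign of the covariance). To bound the covariance I would condition on $\mathcal{F} = \sigma(\vec{\omega}_3, \dots, \vec{\omega}_\ell)$, which freezes $\mat{P}_{12} = \mat{Q}_{(12)}\mat{Q}_{(12)}^*$ and $\mat{C} = (\Id - \mat{P}_{12})\mat{A}(\Id - \mat{P}_{12})$. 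Since the column space of $\mat{A}\mat{\Omega}_{-12}$ is contained in that of $\mat{A}\mat{\Omega}_{-i}$ for $i \in \{1, 2\}$ (so $\mat{P}_i\mat{P}_{12} = \mat{P}_{12}$), I may write $\mat{P}_i = \mat{P}_{12} + \vec{u}_{-i}\vec{u}_{-i}^*$, where $\vec{u}_{-i}$ is the unit vector along $(\Id - \mat{P}_{12})\mat{A}\vec{\omega}_j$ with $\{i,j\} = \{1,2\}$; consequently $\mat{B}_i = (\Id - \mat{P}_i)\mat{C}(\Id - \mat{P}_i)$. Expanding this rank-one update, the basic estimator splits as $\hat{\tr}_i - \tr \mat{A} = U_i + R_i$ with $U_i = \vec{\omega}_i^* \mat{C}\vec{\omega}_i - \tr \mat{C}$ and a remainder $R_i$ built from $\mat{C}$, $\vec{\omega}_i$, and the direction $\vec{u}_{-i}$. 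Two elementary Gaussian computations then show $\expect[R_i \mid \vec{\omega}_j, \mathcal{F}] = 0$ (given $\vec{\omega}_j$ the direction $\vec{u}_{-i}$ is fixed, and one integrates $R_i$ against the standard normal $\vec{\omega}_i$) and that $U_1, U_2$ are conditionally independent given $\mathcal{F}$ with conditional mean zero. Hence the cross terms $\expect[U_1 U_2 \mid \mathcal{F}]$, $\expect[U_1 R_2 \mid \mathcal{F}]$, $\expect[R_1 U_2 \mid \mathcal{F}]$ all vanish, leaving $\Cov[\hat{\tr}_1, \hat{\tr}_2] = \expect[R_1 R_2]$. By Cauchy--Schwarz and the exchangeability of $(R_1, R_2)$ given $\mathcal{F}$, this is at most $\expect[\expect[R_1^2 \mid \mathcal{F}]]$. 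Finally, writing $\vec{\omega}_1 = s\,\vec{u}_{-1} + \vec{w}$ with $s = \vec{u}_{-1}^*\vec{\omega}_1$ and $\vec{w} \perp \vec{u}_{-1}$ collapses $R_1$ to $-(s^2 - 1)\gamma - s\,\vec{w}^*(\mat{C} + \mat{C}^*)\vec{u}_{-1}$, where $\gamma = \vec{u}_{-1}^*\mat{C}\vec{u}_{-1}$; the moments $\expect[s^2] = 1$, $\expect[(s^2-1)^2] = 2$, $\expect[s(s^2-1)] = 0$ and the variance of the Gaussian linear form $\vec{w}^*(\mat{C} + \mat{C}^*)\vec{u}_{-1}$ give $\expect[R_1^2 \mid \mathcal{F}, \vec{\omega}_2] = \norm{(\mat{C} + \mat{C}^*)\vec{u}_{-1}}^2 - 2\gamma^2 \le 4\norm{\mat{C}}^2 \le 4\norm{(\Id - \mat{P}_{12})\mat{A}}^2$. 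Averaging over $\mathcal{F}$ and assembling the pieces yields the proposition.

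The main obstacle is the covariance step. Making the three cross terms vanish requires conditioning on exactly the right information --- everything except the two distinguished test vectors --- and observing that the remainder $R_i$, although it depends on the \emph{other} test vector $\vec{\omega}_j$ through the rank-one direction $\vec{u}_{-i}$, nonetheless integrates to zero over $\vec{\omega}_i$ alone. The second delicate point is the exact constant: the cancellation $2\gamma^2 - 4\gamma^2 = -2\gamma^2$ inside $\expect[R_1^2 \mid \mathcal{F}]$ --- coming from the correlation between $s^2$ and the $\vec{u}_{-1}$-component of the Gaussian linear form --- is precisely what keeps the covariance constant at $4$ rather than something larger. The measure-zero event $(\Id - \mat{P}_{12})\mat{A}\vec{\omega}_j = \vec{0}$ is harmless, since then $\mat{P}_i = \mat{P}_{12}$ and $R_i = 0$.
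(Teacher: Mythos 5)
Your proof is correct, and the skeleton is the same as the paper's: unbiasedness by conditioning on $\mat{\Omega}_{-i}$, the exchangeable variance decomposition $\Var[\hat{\tr}_{\rm X}] = \ell^{-1}\Var[\hat{\tr}_1] + (1-\ell^{-1})\Cov[\hat{\tr}_1,\hat{\tr}_2]$, the Gaussian quadratic-form identity for the diagonal term, and, for the covariance, reduction to $\Cov[\hat{\tr}_1,\hat{\tr}_2] = \expect[R_1 R_2] \leq \expect[R_1^2]$. Your residual $R_i$ is in fact the same quantity the paper produces by inserting the auxiliary matrix $\mat{X} = (\Id-\mat{\Pi}_{12})\mat{A}(\Id-\mat{\Pi}_{12})$ in the chain of identities \cref{eq:start}, \cref{eq:mid}, \cref{eq:end}: with $\mat{U}_i = \vec{\omega}_i^*\mat{X}\vec{\omega}_i - \tr\mat{X}$, the paper's centered factor is exactly $\hat{\tr}_i - \tr\mat{A} - U_i = R_i$, and your cross-term cancellation (condition on $\mathcal{F}$ and one distinguished test vector, then use $\expect[R_i \mid \vec{\omega}_j,\mathcal{F}]=0$) is equivalent to the paper's two-stage conditioning argument. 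The genuine divergence is the final bound on $\expect[R_1^2]$. The paper views $R_1 = \vec{\omega}_1^*(\mat{B}_1 - \mat{X})\vec{\omega}_1 - \tr(\mat{B}_1 - \mat{X})$, applies $\Var[\vec{g}^*\mat{M}\vec{g}] \leq 2\norm{\mat{M}}_{\rm F}^2$, and then shows $\norm{\mat{B}_1 - \mat{X}}_{\rm F}^2 \leq 2\norm{\mat{X}}^2$ via the Pythagorean theorem together with the fact that $\mat{\Pi}_1 - \mat{\Pi}_{12}$ is a rank-one orthogonal projector (so its Frobenius and spectral norms coincide). You instead make the rank-one update $\mat{P}_1 = \mat{P}_{12} + \vec{u}_{-1}\vec{u}_{-1}^*$ explicit and compute the conditional second moment in closed form, $\expect[R_1^2 \mid \mathcal{F},\vec{\omega}_2] = \norm{(\mat{C}+\mat{C}^*)\vec{u}_{-1}}^2 - 2\gamma^2 \leq 4\norm{\mat{C}}^2$, by decomposing $\vec{\omega}_1 = s\vec{u}_{-1} + \vec{w}$. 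Both routes produce the constant $4$; yours gives the exact value before the last relaxation (which pinpoints where the $4$ comes from, and shows the $-2\gamma^2$ slack), at the cost of a more hands-on scalar calculation, whereas the paper's norm-level argument is shorter. Your handling of the measure-zero degenerate event, and your observation that $1 - \ell^{-1} \leq 1$ makes the sign of the covariance immaterial, are both fine. In short: same argument, with the residual second moment bounded by a more explicit Gaussian computation rather than the paper's Frobenius-norm and rank-one-projector trick.
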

	
	\begin{proof}
		For all indices $1 \leq i, j \leq m/2$, we abbreviate the orthogonal projectors $\mat{\Pi}_i \coloneqq \mat{Q}_{(i)}^{\phantom{*}} \mat{Q}_{(i)}^*$
		and $\mat{\Pi}_{ij} \coloneqq  \mat{Q}_{(ij)}^{\phantom{*}} \mat{Q}_{(ij)}^*$.  Note that $\vec{\omega}_i$ is independent from $\mat{\Pi}_i$, while $(\vec{\omega}_i, \vec{\omega}_j)$ is independent from $\mat{\Pi}_{ij}$.
		
		For each $1 \leq i \leq m/2$, we can check that the basic estimator $\hat{\tr}_i$ defined in~\cref{eq:trX-i} is unbiased.
		To do so, condition on $\mat{\Omega}_{-i}$ and average over $\vec{\omega}_i$ to arrive at the identity
		\begin{equation} \label{eq:xtrace_unbiased}
			\begin{split}
				\mathbb{E} \bigl[\hat{\tr}_i \,\big|\, \mat{\Omega}_{-i} \bigr] 
				&= \mathbb{E} \bigl[\tr (\mat{\Pi}_i\mat{A}\mat{\Pi}_i ) + \vec{\omega}_i^*(\Id-\mat{\Pi}_i) \mat{A} (\Id-\mat{\Pi}_i )\vec{\omega}_i \,\big|\, \mat{\Omega}_{-i} \bigr] \\
				&= \mathbb{E} \bigl[\tr(\mat{\Pi}_i\mat{A}\mat{\Pi}_i ) + \tr \big( (\Id-\mat{\Pi}_i) \mat{A} (\Id-\mat{\Pi}_i ) \big) \,\big|\, \mat{\Omega}_{-i} \bigr]
				= \tr \mat{A}.
			\end{split}
		\end{equation}
		The second equality uses the fact that each test vector $\vec{\omega}_i$ is isotropic 
		and independent from $\mat{\Pi}_i$, which is a function of $\mat{\Omega}_{-i}$.  The third equality follows when we cycle the traces
		and invoke the fact that the projector $\Id - \mat{\Pi}_i$ is idempotent.
		We confirm that $\hat{\tr}_i$ is unbiased by applying the tower property to take the total expectation of~\cref{eq:xtrace_unbiased}.
		The full estimator $\hat{\tr}_{\rm X}$ is unbiased because it is an average of the unbiased estimators $\hat{\tr}_i$.
		
		Next, to bound the variance, we use the exchangeability of $\vec{\omega}_1, \ldots, \vec{\omega}_{m/2}$ to compute
		\begin{equation*}
			\Var \bigl[\hat{\tr}_{\rm X}\bigr]
			= \Var \Biggl[\frac{1}{m/2} \sum_{i=1}^{m/2} \hat{\tr}_i\Biggr]
			= \Biggl(\frac{1}{m/2}\Biggr)\, \underbrace{\Var\bigl[\hat{\tr}_1\bigr]}_{\sf A} 
			+ \Biggl(1 - \frac{1}{m/2}\Biggr)\, \underbrace{\Cov\bigl[\hat{\tr}_1, \hat{\tr}_2\bigr]}_{\sf B}.
		\end{equation*}
		Hence, $\Var \bigl[\hat{\tr}_{\rm X}\bigr]$ is the weighted average of a variance term {\sf A} and a covariance term {\sf B}.
		
		To evaluate the variance term {\sf A}, condition on $\mat{\Omega}_{-1}$
		and average over $\vec{\omega}_1$.  Thus,
		\begin{align*}
			{\sf A} &= \Var\bigl[\hat{\tr}_1\bigr]
			= \expect \bigl[ \Var\big[ \hat{\tr}_1 \, \big\vert \, \mat{\Omega}_{-1} \big] \bigr]
			+ \Var\bigl[ \expect\bigl[ \hat{\tr}_1 \, \big\vert \, \mat{\Omega}_{-1} \bigr] \bigr] \\
			&= \expect \bigl[ \Var \bigl[ \tr(\mat{\Pi}_1^*\mat{A}\mat{\Pi}_{1} \big) + \vec{\omega}_1^* (\Id-\mat{\Pi}_1) \mat{A} (\Id-\mat{\Pi}_i)  \vec{\omega}_1 \, \big\vert \, \mat{\Omega}_{-1} \bigr] \bigr] \\
			&= \expect \bigl[ \Var\bigl[
			\vec{\omega}_1^* (\Id-\mat{\Pi}_{1}) \mat{A} (\Id-\mat{\Pi}_{1}) \vec{\omega}_1
			\,\big|\, \mat{\Omega}_{-1}
			\bigr] \bigr] \leq 2\,\mathbb{E} \bigl \lVert (\Id-\mat{\Pi}_{1}) \mat{A} (\Id-\mat{\Pi}_{1}) \bigr \rVert^2_{\rm F}.
		\end{align*}
		The first relation is the chain rule for the variance.
		To pass to the second line, we invoke the fact \cref{eq:xtrace_unbiased} that the conditional expectation is constant. To pass to the third line, we drop the trace, which is conditionally constant.  The last relation follows from a direct calculation using the facts that $\vec{\omega}_1$ is standard normal
		and independent from $\mat{\Pi}_{1}$. 
		
		To bound the covariance term {\sf B}, it is helpful to isolate the part of the covariance
		that only depends on $\vec{\omega}_1$ and $\vec{\omega}_2$.  We rely on the following
		observation.  For any (random) matrix $\mat{X} \in \mathbb{R}^{N \times N}$
		that is independent from $\vec{\omega}_1$ and $\vec{\omega}_2$, we may calculate
		that
		\begin{align}
			\label{eq:start}
			& \mathbb{E}\bigl[\bigl(\hat{\tr}_1 - \tr \mat{A} + \tr \mat{X} - \vec{\omega}_1^* \mat{X} \vec{\omega}_1^{\vphantom{*}}
			\bigr)
			\bigl(\hat{\tr}_2 - \tr \mat{A} + \tr \mat{X} - \vec{\omega}_2^* \mat{X} \vec{\omega}_2^{\vphantom{*}}
			\bigr)\bigr] \\ 
			\label{eq:mid}
			&\qquad= \mathbb{E}\bigl[\bigl(\hat{\tr}_1 - \tr \mat{A} + \tr \mat{X} - \vec{\omega}_1^* \mat{X} \vec{\omega}_1^{\vphantom{*}}
			\bigr)
			\bigl(\hat{\tr}_2 - \tr \mat{A}\bigr)\bigr] \\
			\label{eq:end}
			&\qquad= \mathbb{E}\bigl[\bigl(\hat{\tr}_1 - \tr \mat{A} \bigr)
			\bigl(\hat{\tr}_2 - \tr \mat{A} \bigr)\bigr]
			= \Cov\bigl[ \hat{\tr}_1, \hat{\tr}_2 \bigr].
		\end{align}
		To pass to \cref{eq:mid}, we condition on $\mat{\Omega}_{-1}$ and average over $\vec{\omega}_1$, exploiting the fact~\cref{eq:xtrace_unbiased} that $\hat{\tr}_1$ is an unbiased estimator of $\tr \mat{A}$, conditional on $\mat{\Omega}_{-1}$.
		To pass to \cref{eq:end}, condition on $\mat{\Omega}_{-2}$ and average over $\vec{\omega}_2$.
		
		To continue, select the particular random matrix $\mat{X} = (\Id-\mat{\Pi}_{12}) \mat{A} (\Id-\mat{\Pi}_{12})$.  Applying the Cauchy--Schwarz inequality, we find that
		\begin{align*}
			{\sf B} = \Cov\bigl[\hat{\tr}_1, \hat{\tr}_2\bigr]
			&= \mathbb{E}\bigl[\bigl(\hat{\tr}_1 - \tr \mat{A} + \tr \mat{X} - \vec{\omega}_1^* \mat{X} \vec{\omega}_1
			\bigr)
			\bigl(\hat{\tr}_2 - \tr \mat{A} + \tr \mat{X} - \vec{\omega}_2^* \mat{X} \vec{\omega}_2
			\bigr)\bigr] \\
			&\leq \mathbb{E}\bigl|\hat{\tr}_1 - \tr \mat{A} + \tr \mat{X} - \vec{\omega}_1^* \mat{X} \vec{\omega}_1
			\bigr|^2 \\
			&= \mathbb{E}\Var\bigl[
			\vec{\omega}_1^*\bigl[(\Id-\mat{\Pi}_{1}) \mat{A} (\Id-\mat{\Pi}_{1}) - \mat{X}\bigr]\vec{\omega}_1
			\,\big|\, \vec{\Omega}_{-1}
			\bigr] \\
			&\leq 2 \,\mathbb{E} \bigl\lVert (\Id-\mat{\Pi}_{1}) \mat{A} (\Id-\mat{\Pi}_{1}) - \mat{X} \bigr\rVert_{\rm F}^2.
		\end{align*}
		Since $\operatorname{range}(\mat{Q}_{(12)}) \subseteq \operatorname{range}(\mat{Q}_{(1)})$, we have the relations $\mat{\Pi}_{12} = \mat{\Pi}_1 \mat{\Pi}_{12}$ and $\Id - \mat{\Pi}_1 = (\Id - \mat{\Pi}_1)(\Id - \mat{\Pi}_{12})$.
		Since $\mat{X} = (\Id - \mat{\Pi}_{12}) \mat{A} (\Id - \mat{\Pi}_{12})$,
		it follows that
		\begin{align*}
			\bigl\lVert \big(\Id-\mat{\Pi}_1\big) \mat{A}\big(\Id-\mat{\Pi}_1\big) - \mat{X} \bigr\rVert_{\rm F}^2 
			&= \bigl\lVert \big(\Id-\mat{\Pi}_1\big) \mat{X}\big(\Id-\mat{\Pi}_1\big) - \mat{X} \bigr\rVert_{\rm F}^2 \\
			&= \bigl\lVert \big(\Id-\mat{\Pi}_1\big) \mat{X} \mat{\Pi}_1 + \mat{\Pi}_1 \mat{X} \bigr\rVert_{\rm F}^2 \\
			&= \bigl\lVert \big(\Id-\mat{\Pi}_1\big) \mat{X} \mat{\Pi}_1 \bigr\rVert_{\rm F}^2 
			+ \bigl\lVert \mat{\Pi}_1 \mat{X} \bigr\rVert_{\rm F}^2\\
			&\leq \norm{ \mat{X} \mat{\Pi}_1 }_{\rm F}^2 + \norm{  \mat{\Pi}_1 \mat{X} }_{\rm F}^2 \\
			&= \bigl\lVert \mat{X} \bigl(\mat{\Pi}_1 - \mat{\Pi}_{12}\bigr) \bigr\rVert_{\rm F}^2
			+ \bigl\lVert \bigl(\mat{\Pi}_1 - \mat{\Pi}_{12}\bigr) \mat{X} \bigr\rVert_{\rm F}^2 \\
			&= \bigl\lVert \mat{X} \bigl(\mat{\Pi}_1 - \mat{\Pi}_{12}\bigr) \bigr\rVert^2
			+ \bigl\lVert \bigl(\mat{\Pi}_1 - \mat{\Pi}_{12}\bigr) \mat{X} \bigr\rVert^2
			\leq 2 \norm{ \mat{X} }^2.
		\end{align*}
		We invoke the Pythagorean theorem to pass to the third line.  To reach the fifth line,
		exploit the representations $\mat{X} = \mat{X} (\Id - \mat{\Pi}_{12})$ and 
		$\mat{X} = (\Id - \mat{\Pi}_{12}) \mat{X}$.
		To pass to the sixth line, note that $\mat{\Pi}_1 - \mat{\Pi}_{12}$ is a rank-one orthogonal projector;
		the Frobenius norm and spectral norm coincide for rank-one matrices.
		Combining the last two displays, we deduce that
		\begin{equation*}
			{\sf B} = \Cov\bigl[\hat{\tr}_1, \hat{\tr}_2\bigr] \leq 4 \,\mathbb{E}\,\bigl\lVert (\Id-\mat{\Pi}_{12}) \mat{A} (\Id-\mat{\Pi}_{12}) \bigr\rVert^2.
		\end{equation*}
		Combining the estimates for {\sf A} and {\sf B}, we achieve the stated bound for the variance.
	\end{proof}
	
	\subsection{\HutchPP variance bound}
	
	By a similar argument, we can obtain an initial variance bound for the \HutchPP estimator.
	This result is more elementary because it does not require us to account for
	interactions between the simple estimators.
	
	\begin{proposition}[\HutchPP error] \label{prop:hpp}
		Fix $\mat{A} \in \real^{N \times N}$\!, and consider the \HutchPP estimator $\hat{\tr}_{\rm H++}$ defined in~\cref{eq:hutch++}
		with $2m/3$ standard normal test vectors.  The estimator is unbiased: $\expect \hat{\tr}_{\rm H++} = \tr \mat{A}$.
		Moreover, the variance satisfies
		\begin{align*}
			\mathbb{E} \,\bigl| \hat{\tr}_{\rm H++} - \tr \mat{A} \bigr|^2
			&\leq \frac{2}{m/3} \mathbb{E} \lVert (\Id - \mat{Q} \mat{Q}^\ast) \mat{A} \rVert_{\rm F}^2,
		\end{align*}
		where $\mat{Q} = \orth(\mat{A}\mat{\Omega})$ and $\mat{\Omega} = \begin{bmatrix} \vec{\omega}_{m/3+1} & \dots & \vec{\omega}_{2m/3} \end{bmatrix}$.
	\end{proposition}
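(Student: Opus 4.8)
The plan is to follow the template of \cref{prop:x}, but the argument is considerably shorter because the sketch used to build the basis matrix is statistically independent of the test vectors used to estimate the residual trace. Throughout, write $\mat{\Pi} = \mat{Q}\mat{Q}^*$ for the orthogonal projector onto $\operatorname{range}(\mat{Q})$. Since $\mat{Q} = \orth(\mat{A}\mat{\Omega})$ with $\mat{\Omega} = \begin{bmatrix}\vec{\omega}_{m/3+1} & \cdots & \vec{\omega}_{2m/3}\end{bmatrix}$, the projector $\mat{\Pi}$ is a function of $\vec{\omega}_{m/3+1},\ldots,\vec{\omega}_{2m/3}$ and is therefore independent of $\vec{\omega}_1,\ldots,\vec{\omega}_{m/3}$.

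First I would prove unbiasedness. Conditioning on $\mat{\Omega}$ and averaging over the isotropic vectors $\vec{\omega}_1,\ldots,\vec{\omega}_{m/3}$, relation \cref{eq:isotropic} gives $\mathbb{E}[\vec{\omega}_i^*(\Id-\mat{\Pi})\mat{A}(\Id-\mat{\Pi})\vec{\omega}_i \mid \mat{\Omega}] = \tr((\Id-\mat{\Pi})\mat{A}(\Id-\mat{\Pi}))$ for each $i$. Hence
\begin{equation*}
    \mathbb{E}[\hat{\tr}_{\rm H++} \mid \mat{\Omega}] = \tr(\mat{\Pi}\mat{A}\mat{\Pi}) + \tr\big((\Id-\mat{\Pi})\mat{A}(\Id-\mat{\Pi})\big) = \tr(\mat{\Pi}\mat{A}) + \tr\big((\Id-\mat{\Pi})\mat{A}\big) = \tr\mat{A},
\end{equation*}
where the second equality cycles the traces and uses that $\mat{\Pi}$ and $\Id-\mat{\Pi}$ are idempotent. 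Taking the total expectation via the tower property gives $\mathbb{E}\hat{\tr}_{\rm H++} = \tr\mat{A}$.

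For the variance, I would invoke the law of total variance, conditioning on $\mat{\Omega}$. The ``between'' term $\Var[\mathbb{E}[\hat{\tr}_{\rm H++}\mid\mat{\Omega}]]$ vanishes since the conditional mean equals the constant $\tr\mat{A}$. For the ``within'' term, conditional on $\mat{\Omega}$ the estimator is an average of $m/3$ iid quadratic forms in the independent vectors $\vec{\omega}_1,\ldots,\vec{\omega}_{m/3}$, so
\begin{equation*}
    \Var[\hat{\tr}_{\rm H++}\mid\mat{\Omega}] = \frac{1}{m/3}\,\Var\big[\vec{\omega}_1^*(\Id-\mat{\Pi})\mat{A}(\Id-\mat{\Pi})\vec{\omega}_1 \,\big|\, \mat{\Omega}\big].
\end{equation*}
The only computational ingredient is the standard fact that a Gaussian quadratic form $\vec{\omega}^*\mat{M}\vec{\omega}$ with $\vec{\omega}\sim\textsc{normal}(\vec{0},\Id)$ has variance equal to twice the squared Frobenius norm of the symmetric part of $\mat{M}$, and so at most $2\norm{\mat{M}}_{\rm F}^2$. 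Applying this with $\mat{M} = (\Id-\mat{\Pi})\mat{A}(\Id-\mat{\Pi})$ and then using $\norm{(\Id-\mat{\Pi})\mat{A}(\Id-\mat{\Pi})}_{\rm F} \leq \norm{(\Id-\mat{\Pi})\mat{A}}_{\rm F}$ (right multiplication by an orthogonal projector cannot increase the Frobenius norm), I obtain $\Var[\hat{\tr}_{\rm H++}\mid\mat{\Omega}] \leq (2/(m/3))\,\norm{(\Id-\mat{Q}\mat{Q}^*)\mat{A}}_{\rm F}^2$; averaging over $\mat{\Omega}$ delivers the stated bound.

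There is no genuine difficulty here: this proposition is precisely the ``easy case'' of the theory, since the independence of the sketch from the residual test vectors eliminates the covariance terms $\Cov[\hat{\tr}_1,\hat{\tr}_2]$ that make the proof of \cref{prop:x} delicate. The only points demanding care are the bookkeeping of the conditioning and citing the variance formula for Gaussian quadratic forms.
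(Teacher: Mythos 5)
Your argument is correct and follows exactly the route the paper indicates: condition on the sketch $\mat{\Omega}$, apply the law of total variance (the between-group term vanishes because the conditional mean is the constant $\tr\mat{A}$), and bound the conditional variance using the formula $\Var[\vec{\omega}^*\mat{M}\vec{\omega}] \le 2\lVert\mat{M}\rVert_{\rm F}^2$ for Gaussian quadratic forms together with $\lVert(\Id-\mat{\Pi})\mat{A}(\Id-\mat{\Pi})\rVert_{\rm F} \le \lVert(\Id-\mat{\Pi})\mat{A}\rVert_{\rm F}$. The paper merely cites~\cite{MMMW21a} for this computation, so your proposal supplies the details of essentially the same proof.
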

	
	\begin{proof}
		The idea is to condition on the low-rank approximation $\mat{QQ}^* \mat{A}$
		and invoke the chain rule for the variance, as in the proof of \cref{prop:x}.
		See the argument in~\cite[Thm.~10]{MMMW21a}, which was supplied by the second author of this paper.
	\end{proof}
	
	\subsection{\NysTrace variance bound}
	
	Last, we establish an initial variance bound for the \NysTrace estimator.
	This result shows how the variance depends on the error in a randomized
	Nystr{\"o}m approximation.  Later, we will use recent results for the
	Nystr{\"o}m approximation to obtain a complete variance bound.
	
	\begin{proposition}[\NysTrace error] \label{prop:xn}
		Let $\mat{A} \in \real^{N \times N}$ be psd.
		Consider the \NysTrace estimator $\hat{\tr}_{NX}$ with $m$ standard normal test vectors
		as defined in~\cref{eq:xnystrace}.  The estimator is unbiased: $\expect \hat{\tr}_{\rm NX} = \tr \mat{A}$.
		Moreover, the variance satisfies the bound
		\begin{align*}
			\mathbb{E}\, \bigl| \hat{\tr}_{\rm XN} - \tr \mat{A} \bigr|^2
			&\leq \frac{2}{m} \mathbb{E} \bigl\lVert \mat{A} - \mat{A}\langle \mat{\Omega}_{-1}\rangle \bigr\rVert_{\rm F}^2 
			+ 2\,
			\mathbb{E} \bigl\lVert \mat{A} - \mat{A}\langle \mat{\Omega}_{-12}\rangle \bigr\rVert^2.
		\end{align*}
	\end{proposition}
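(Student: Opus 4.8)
The plan is to mirror the \XTrace argument of \cref{prop:x}, with the orthogonal projector $\mat{\Pi}_i = \mat{Q}_{(i)}\mat{Q}_{(i)}^*$ replaced by the Nystr\"om residual $\mat{A} - \mat{A}\langle \mat{\Omega}_{-i}\rangle$. The starting point is to record the structural facts needed about the Nystr\"om approximation of a psd matrix. Writing $\mat{A} = \mat{B}\mat{B}^*$, a short calculation shows $\mat{A}\langle \mat{X}\rangle = \mat{B}\mat{P}_{\mat{B}^*\mat{X}}\mat{B}^*$, where $\mat{P}_{\mat{B}^*\mat{X}}$ denotes the orthogonal projector onto $\operatorname{range}(\mat{B}^*\mat{X})$. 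In particular, $\mat{A}\langle \mat{\Omega}_{-i}\rangle$ is a function of $\mat{\Omega}_{-i}$ alone (hence independent of $\vec{\omega}_i$), and $\mat{A}\langle \mat{\Omega}_{-12}\rangle$ is a function of $\mat{\Omega}_{-12}$ alone (hence independent of $\vec{\omega}_1,\vec{\omega}_2$). Unbiasedness of each basic estimator $\hat{\tr}_i$ then follows by conditioning on $\mat{\Omega}_{-i}$ and using isotropy of $\vec{\omega}_i$, exactly as in \cref{eq:xtrace_unbiased}; averaging gives $\expect \hat{\tr}_{\rm XN} = \tr \mat{A}$.

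For the variance, exchangeability of $\vec{\omega}_1,\dots,\vec{\omega}_m$ yields $\Var[\hat{\tr}_{\rm XN}] = \tfrac{1}{m}\Var[\hat{\tr}_1] + (1-\tfrac{1}{m})\Cov[\hat{\tr}_1,\hat{\tr}_2]$, so it suffices to bound a variance term $\mathsf A$ and a covariance term $\mathsf B$. For $\mathsf A$ I apply the chain rule for the variance, conditioning on $\mat{\Omega}_{-1}$: the conditional expectation of $\hat{\tr}_1$ is the constant $\tr \mat{A}$, the term $\tr \mat{A}\langle \mat{\Omega}_{-1}\rangle$ is conditionally constant, and $\mat{A} - \mat{A}\langle \mat{\Omega}_{-1}\rangle$ is symmetric, so the identity $\Var[\vec{\omega}^*\mat{M}\vec{\omega}] = 2\norm{\mat{M}}_{\rm F}^2$ for standard normal $\vec{\omega}$ and symmetric $\mat{M}$ gives $\mathsf A = 2\,\expect\norm{\mat{A} - \mat{A}\langle \mat{\Omega}_{-1}\rangle}_{\rm F}^2$, which contributes the first term of the bound.

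For $\mathsf B$ I reuse the control-variate identity \cref{eq:start}--\cref{eq:end}: for any matrix $\mat{X}$ independent of $\vec{\omega}_1,\vec{\omega}_2$, the covariance $\Cov[\hat{\tr}_1,\hat{\tr}_2]$ equals the expectation of $(\hat{\tr}_1 - \tr\mat{A} + \tr\mat{X} - \vec{\omega}_1^*\mat{X}\vec{\omega}_1)(\hat{\tr}_2 - \tr\mat{A} + \tr\mat{X} - \vec{\omega}_2^*\mat{X}\vec{\omega}_2)$, the proof being identical since it uses only the conditional unbiasedness of $\hat{\tr}_i$ given $\mat{\Omega}_{-i}$ together with isotropy. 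Choosing $\mat{X} = \mat{A} - \mat{A}\langle \mat{\Omega}_{-12}\rangle$, Cauchy--Schwarz and exchangeability give $\mathsf B \le \expect | \hat{\tr}_1 - \tr\mat{A} + \tr\mat{X} - \vec{\omega}_1^*\mat{X}\vec{\omega}_1 |^2$. Since $\hat{\tr}_1 - \tr\mat{A} = \vec{\omega}_1^*(\mat{A} - \mat{A}\langle \mat{\Omega}_{-1}\rangle)\vec{\omega}_1 - \tr(\mat{A} - \mat{A}\langle \mat{\Omega}_{-1}\rangle)$, the quantity inside the norm equals $\vec{\omega}_1^*\mat{Z}\vec{\omega}_1 - \tr\mat{Z}$ with the symmetric matrix $\mat{Z} = \mat{A}\langle \mat{\Omega}_{-12}\rangle - \mat{A}\langle \mat{\Omega}_{-1}\rangle$, which is constant given $\mat{\Omega}_{-1}$; conditioning and applying the Gaussian identity again bounds $\mathsf B \le 2\,\expect\norm{\mat{A}\langle \mat{\Omega}_{-12}\rangle - \mat{A}\langle \mat{\Omega}_{-1}\rangle}_{\rm F}^2$.

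The final step, which I expect to be the crux, is to bound $\norm{\mat{A}\langle \mat{\Omega}_{-12}\rangle - \mat{A}\langle \mat{\Omega}_{-1}\rangle}_{\rm F}$ by $\norm{\mat{A} - \mat{A}\langle \mat{\Omega}_{-12}\rangle}$ --- the Nystr\"om analogue of the rank-one projector update exploited in \cref{prop:x}. Using $\mat{A}\langle \mat{\Omega}_{-1}\rangle = \mat{B}\mat{P}_1\mat{B}^*$ and $\mat{A}\langle \mat{\Omega}_{-12}\rangle = \mat{B}\mat{P}_{12}\mat{B}^*$ with $\mat{P}_1,\mat{P}_{12}$ the projectors onto $\operatorname{range}(\mat{B}^*\mat{\Omega}_{-1})$ and $\operatorname{range}(\mat{B}^*\mat{\Omega}_{-12})$, we have nested ranges, so $\mat{P}_{12} \preceq \mat{P}_1 \preceq \Id$, and because $\mat{\Omega}_{-12}$ has exactly one fewer column than $\mat{\Omega}_{-1}$, the difference $\mat{P}_1 - \mat{P}_{12}$ is an orthogonal projector of rank at most one. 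Hence $\mat{0} \preceq \mat{A}\langle \mat{\Omega}_{-1}\rangle - \mat{A}\langle \mat{\Omega}_{-12}\rangle = \mat{B}(\mat{P}_1 - \mat{P}_{12})\mat{B}^* \preceq \mat{B}(\Id - \mat{P}_{12})\mat{B}^* = \mat{A} - \mat{A}\langle \mat{\Omega}_{-12}\rangle$, so the left-hand matrix has rank at most one (its Frobenius and spectral norms coincide), and by monotonicity of the spectral norm on the psd cone its norm is at most $\norm{\mat{A} - \mat{A}\langle \mat{\Omega}_{-12}\rangle}$. Substituting into the bound for $\mathsf B$, noting $1 - \tfrac1m \le 1$, and combining with $\tfrac1m \mathsf A$ gives the claim. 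Apart from this structural lemma the whole argument is a transcription of the \XTrace proof; the care required is the independence and rank bookkeeping for the Nystr\"om approximation in place of the randomized SVD.
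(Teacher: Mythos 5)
Your proof is correct and follows essentially the same route as the paper: variance decomposition via exchangeability, the Gaussian quadratic-form variance identity for the term $\mathsf{A}$, the control-variate identity from \cref{eq:start}--\cref{eq:end} with $\mat{X} = \mat{A} - \mat{A}\langle\mat{\Omega}_{-12}\rangle$ followed by Cauchy--Schwarz for $\mathsf{B}$, and then the rank-one, psd-order comparison between $\mat{A}\langle\mat{\Omega}_{-1}\rangle - \mat{A}\langle\mat{\Omega}_{-12}\rangle$ and $\mat{A} - \mat{A}\langle\mat{\Omega}_{-12}\rangle$. The only point of difference is that where the paper merely asserts that $\mat{A}\langle\mat{\Omega}_{-1}\rangle - \mat{A}\langle\mat{\Omega}_{-12}\rangle$ is rank-one, psd, and dominated in the psd order, you supply a clean proof of these facts via the factorization $\mat{A}\langle\mat{X}\rangle = \mat{B}\mat{P}_{\mat{B}^*\mat{X}}\mat{B}^*$ and the observation that $\mat{P}_1 - \mat{P}_{12}$ is an orthogonal projector of rank at most one; this is a welcome addition of detail rather than a different argument.
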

	\begin{proof}
		The proof resembles the proof of \cref{prop:x} but is slightly simpler.
		The unbiasedness of \NysTrace follows from a short computation similar to \cref{eq:xtrace_unbiased}.
		
		To control the variance, we calculate that
		\begin{equation*}
			\Var \bigl[\hat{\tr}_{\rm XN}\bigr]
			= \Var \Biggl[\frac{1}{m} \sum_{i=1}^{m} \hat{\tr}_i\Biggr]
			= \Biggl(\frac{1}{m}\Biggr)\, \underbrace{\Var\bigl[\hat{\tr}_1\bigr]}_{\sf A} 
			+ \Biggl(1 - \frac{1}{m}\Biggr)\, \underbrace{\Cov\bigl[\hat{\tr}_1, \hat{\tr}_2\bigr]}_{\sf B}.
		\end{equation*}
		The variance term is exactly
		$$
		{\sf A} = \Var\bigl[\hat{\tr}_1\bigr] = 2\mathbb{E} \bigl \lVert \mat{A} - \mat{A}\langle \mat{\Omega}_{-1} \rangle \bigr \rVert^2_{\rm F}.
		$$
		To bound the covariance term {\sf B}, we set
		$\mat{X} = \mat{A} - \mat{A}\langle \mat{\Omega}_{-12} \rangle$ in~\cref{eq:start}.
		Applying the Cauchy--Schwarz inequality, we find that
		\begin{align*}
			{\sf B} = \Cov\bigl[\hat{\tr}_1, \hat{\tr}_2\bigr]
			&= \mathbb{E}\bigl[\bigl(\hat{\tr}_1 - \tr \mat{A} + \tr \mat{X} - \vec{\omega}_1^* \mat{X} \vec{\omega}_1
			\bigr)
			\bigl(\hat{\tr}_2 - \tr \mat{A} + \tr \mat{X} - \vec{\omega}_2^* \mat{X} \vec{\omega}_2
			\bigr)\bigr] \\
			&\leq \mathbb{E}\bigl|\hat{\tr}_1 - \tr \mat{A} + \tr \mat{X} - \vec{\omega}_1^* \mat{X} \vec{\omega}_1
			\bigr|^2 \\
			&= \mathbb{E}\Var\bigl[
			\vec{\omega}_1^*\bigl[\mat{A}\langle \mat{\Omega}_{-12} \rangle
			- \mat{A}\langle \mat{\Omega}_{-1} \rangle\bigr]\vec{\omega}_1
			\,\big|\, \mat{\Omega}_{-1}
			\bigr] \\
			&= 2 \,\mathbb{E} \bigl\lVert \mat{A}\langle \mat{\Omega}_{-1} \rangle
			- \mat{A}\langle \mat{\Omega}_{-12} \rangle \bigr\rVert_{\rm F}^2.
		\end{align*}
		The psd matrix $\mat{A}\langle \mat{\Omega}_{-1} \rangle - \mat{A}\langle \mat{\Omega}_{-12} \rangle$ has rank one,
		and it is bounded above by $\mat{A} - \mat{A}\langle \mat{\Omega}_{-12} \rangle$ in the psd order.  Therefore,
		\begin{equation*}
			{\sf B} = \Cov\bigl[\hat{\tr}_1, \hat{\tr}_2\bigr]
			\leq 2 \, \mathbb{E} \bigl\lVert \mat{A} - \mat{A}\langle \mat{\Omega}_{-12}\rangle \bigr\rVert^2.
		\end{equation*}
		Combine the displays to complete the proof.
	\end{proof}
	
	\subsection{Error bounds for low-rank approximations}
	
	To prove the main result, \cref{thm:main}, we need two auxiliary lemmas.
	First, we present error bounds for randomized SVD and randomized Nystr\"om approximation,
	drawn from the recent paper \cite[Thm.~6.7 and Cor.~6.8]{TW23a}.
	
	\begin{lemma}[Randomized SVD and randomized Nystr\"om error] \label{lem:bounds}
		Fix a matrix $\mat{A} \in \mathbb{R}^{N \times N}$, and draw a standard normal matrix
		$\mat{\Omega} \in \mathbb{R}^{N \times k}$.
		For any $r \leq k - 2$, the randomized SVD error is bounded by
		\begin{align*}
			& \mathbb{E} \lVert (\Id - \mat{Q}\mat{Q}^*)\mat{A}  \rVert^2 
			\leq \frac{k + r - 1}{k - r - 1} \Biggl(\lVert \mat{A} - \lowrank{\mat{A}}_r \rVert^2
			+ \frac{\e^2}{k-r} \lVert \mat{A} - \lowrank{\mat{A}}_r \rVert^2_{\rm F} \Biggr), \\
			& \mathbb{E} \lVert (\Id - \mat{Q}\mat{Q}^*)\mat{A} \rVert^2_{\rm F} 
			\leq \frac{k - 1}{k - r - 1} 
			\lVert \mat{A} - \lowrank{\mat{A}}_r \rVert^2_{\rm F},
		\end{align*}
		where $\mat{Q} = \operatorname{orth}(\mat{A}\mat{\Omega})$.
		
		Assume that $\mat{A \in \real^{N \times N}}$ is a psd matrix.  For any $r \leq k - 4$, the randomized Nystr\"om error is bounded by
		\begin{align*}
			& \bigl(\mathbb{E} \lVert \mat{A} - \mat{A}\langle \mat{\Omega}\rangle \rVert^2\bigr)^{1 \slash 2} 
			\leq \frac{k + r - 1}{k - r - 3} \Biggl(\lVert \mat{A} - \lowrank{\mat{A}}_r \rVert
			+ \frac{\sqrt{3} \e^2}{k - r} \lVert \mat{A} - \lowrank{\mat{A}}_r \rVert_{\ast} \Biggr), \\
			& \bigl(\mathbb{E} \lVert \mat{A} - \mat{A}\langle \mat{\Omega}\rangle \rVert^2_{\rm F}\bigr)^{1 \slash 2} 
			\leq \frac{k - 2}{k - r - 3} \Biggl(\lVert \mat{A} - \lowrank{\mat{A}}_r \rVert_{\rm F} + \frac{1}{\sqrt{k - r}} \lVert \mat{A} - \lowrank{\mat{A}}_r \rVert_{\ast} \Biggr).
		\end{align*}
	\end{lemma}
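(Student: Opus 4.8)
The four estimates are repackagings of moment bounds established in \cite{TW23}; the work is to reconcile notation and to read off the stated constants, so I will only outline the route one would follow in reproving them directly. In all four cases the skeleton is the classical three-step analysis of the randomized range finder: pass to a canonical form using rotational invariance of the Gaussian, apply a deterministic structural bound, and then evaluate the relevant moments of the pseudoinverse of a short, wide Gaussian matrix. The Nystr\"om bounds are then obtained by a reduction to the square root $\mat{A}^{1/2}$.

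\emph{The randomized SVD bounds.} For a fixed target rank $r \le k-2$, write the singular value decomposition $\mat{A} = \mat{U}\mat{\Sigma}\mat{V}^*$, partition $\mat{\Sigma} = \diag(\mat{\Sigma}_1,\mat{\Sigma}_2)$ and $\mat{V} = \onebytwo{\mat{V}_1}{\mat{V}_2}$ so that $\mat{\Sigma}_1$ holds the $r$ leading singular values and $\lowrank{\mat{A}}_r = \mat{U}_1\mat{\Sigma}_1\mat{V}_1^*$, and set $\mat{\Omega}_1 = \mat{V}_1^*\mat{\Omega}$ and $\mat{\Omega}_2 = \mat{V}_2^*\mat{\Omega}$. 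By rotational invariance, $\mat{\Omega}_1$ and $\mat{\Omega}_2$ are independent standard normal matrices of sizes $r \times k$ and $(N-r)\times k$, and $\mat{\Omega}_1$ has full row rank almost surely. The deterministic structural bound for the range finder (see \cite[Thm.~9.1]{HMT11}) gives, for either the spectral or the Frobenius norm,
\begin{equation*}
    \norm{(\Id - \mat{Q}\mat{Q}^*)\mat{A}}^2 \le \norm{\mat{\Sigma}_2}^2 + \norm{\mat{\Sigma}_2\mat{\Omega}_2\mat{\Omega}_1^\dagger}^2 .
\end{equation*}
I would condition on $\mat{\Omega}_1$ and average over $\mat{\Omega}_2$: the Frobenius case uses $\mathbb{E}_{\mat{\Omega}_2}\norm{\mat{\Sigma}_2\mat{\Omega}_2\mat{\Omega}_1^\dagger}_{\rm F}^2 = \norm{\mat{\Sigma}_2}_{\rm F}^2\norm{\mat{\Omega}_1^\dagger}_{\rm F}^2$ together with $\mathbb{E}\norm{\mat{\Omega}_1^\dagger}_{\rm F}^2 = r/(k-r-1)$, which produces exactly the factor $(k-1)/(k-r-1)$; the spectral case uses the Gaussian inequality $\mathbb{E}_{\mat{\Omega}_2}\norm{\mat{N}\mat{\Omega}_2\mat{B}}^2 \le \norm{\mat{N}}^2\norm{\mat{B}}_{\rm F}^2 + \norm{\mat{N}}_{\rm F}^2\norm{\mat{B}}^2$ followed by second-moment bounds for $\norm{\mat{\Omega}_1^\dagger}$ and $\norm{\mat{\Omega}_1^\dagger}_{\rm F}$, with the prefactor $(k+r-1)/(k-r-1)$ and the constant $\e$ absorbing the resulting cross terms.

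\emph{The randomized Nystr\"om bounds.} For psd $\mat{A}$, factor $\mat{A} = \mat{B}\mat{B}$ with $\mat{B} = \mat{A}^{1/2}$. A direct computation shows $\nys{\mat{A}}{\mat{\Omega}} = \mat{B}\,\mat{Q}\mat{Q}^*\,\mat{B}$ where $\mat{Q} = \orth(\mat{B}\mat{\Omega}) = \orth(\mat{A}^{1/2}\mat{\Omega})$, so that $\mat{A} - \nys{\mat{A}}{\mat{\Omega}} = \mat{C}^*\mat{C}$ with $\mat{C} = (\Id - \mat{Q}\mat{Q}^*)\mat{A}^{1/2}$. Consequently $\norm{\mat{A} - \nys{\mat{A}}{\mat{\Omega}}} = \norm{\mat{C}}^2$, $\norm{\mat{A} - \nys{\mat{A}}{\mat{\Omega}}}_\ast = \norm{\mat{C}}_{\rm F}^2$, and $\norm{\mat{A} - \nys{\mat{A}}{\mat{\Omega}}}_{\rm F} \le \norm{\mat{C}}\norm{\mat{C}}_{\rm F}$. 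Since $\mat{A}^{1/2}$ has singular values $\lambda_i(\mat{A})^{1/2}$, one has $\norm{\mat{A}^{1/2} - \lowrank{\mat{A}^{1/2}}_r}^2 = \norm{\mat{A} - \lowrank{\mat{A}}_r}$ and $\norm{\mat{A}^{1/2} - \lowrank{\mat{A}^{1/2}}_r}_{\rm F}^2 = \norm{\mat{A} - \lowrank{\mat{A}}_r}_\ast$. I would therefore rerun the canonical reduction above for $\mat{B} = \mat{A}^{1/2}$ with target rank $r$, but now carry the moment computation to \emph{fourth} order in $\norm{\mat{\Omega}_1^\dagger}$ and $\norm{\mat{\Omega}_1^\dagger}_{\rm F}$ (this is what tightens the denominators from $k-r-1$ to $k-r-3$ and forces $r \le k-4$), and then take square roots; for the Frobenius bound an extra application of Cauchy--Schwarz to $\mathbb{E}[\norm{\mat{C}}^2\norm{\mat{C}}_{\rm F}^2]$ is needed, and the constants $\sqrt{3}\,\e^2$ and $(k-r)^{-1/2}$ come out of the fourth-moment evaluations.

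\emph{Main obstacle.} The norm conversions, the canonical reduction, and the deterministic structural bound are routine; the genuine work is the sharp control of the low-order moments $\mathbb{E}\norm{\mat{\Omega}_1^\dagger}^2$, $\mathbb{E}\norm{\mat{\Omega}_1^\dagger}^4$ of the pseudoinverse of a short, wide Gaussian matrix, together with the bookkeeping that collapses the several cross terms into the clean three-term bounds with the advertised constants. Since this moment calculus is carried out in full in \cite{TW23}, the right move in the paper is simply to cite the relevant results there rather than reproduce it.
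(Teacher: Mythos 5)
The paper does not prove \cref{lem:bounds}; it imports the four bounds verbatim from \cite{TW23}, which is exactly what your proposal concludes one should do. Your outline of the underlying argument — the canonical Gaussian reduction and HMT structural bound for the range finder, the conditional moment calculations over $\mat{\Omega}_2$ and $\mat{\Omega}_1^\dagger$, and the Nystr\"om reduction to $\mat{C} = (\Id - \mat{Q}\mat{Q}^*)\mat{A}^{1/2}$ with the norm identities $\norm{\mat{C}^*\mat{C}} = \norm{\mat{C}}^2$, $\norm{\mat{C}^*\mat{C}}_\ast = \norm{\mat{C}}_{\rm F}^2$, $\norm{\mat{C}^*\mat{C}}_{\rm F} \le \norm{\mat{C}}\norm{\mat{C}}_{\rm F}$, and $\norm{\mat{A}^{1/2} - \lowrank{\mat{A}^{1/2}}_r}_{\rm F}^2 = \norm{\mat{A} - \lowrank{\mat{A}}_r}_\ast$ — is a faithful and correct sketch of the route taken in \cite{TW23}.
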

	
	Second, we report a standard fact about the decay rate of the singular values, which is also exploited in \cite[Lem.~13]{MMMW21a} and \cite[Lem.~7]{GSTV07}.  We omit the easy proof.
	
	\begin{fact} \label{lem:trace_relations}
		For any matrix $\mat{A} \in \mathbb{R}^{N \times N}$ and any $r \geq 1$,
		\begin{equation*}
			\lVert \mat{A} - \lowrank{\mat{A}}_r\rVert \leq \frac{\lVert \mat{A} \rVert_\ast}{r + 1},
			\qquad
			\lVert \mat{A} - \lowrank{\mat{A}}_r\rVert_{\rm F} \leq \frac{\lVert \mat{A} \rVert_\ast}{2\sqrt{r}}.
		\end{equation*}
	\end{fact}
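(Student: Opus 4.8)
The plan is to reduce both inequalities to elementary statements about the singular-value spectrum of $\mat{A}$. Write $\sigma_1 \geq \sigma_2 \geq \cdots \geq 0$ for the singular values of $\mat{A}$. By the Eckart--Young--Mirsky theorem, a (simultaneous) best rank-$r$ approximation $\lowrank{\mat{A}}_r$ achieves $\norm{\mat{A} - \lowrank{\mat{A}}_r} = \sigma_{r+1}$ and $\norm{\mat{A} - \lowrank{\mat{A}}_r}_{\rm F}^2 = \sum_{j > r} \sigma_j^2$, while $\norm{\mat{A}}_\ast = \sum_{j \geq 1} \sigma_j$. With these identities in hand, the two bounds become inequalities about a nonincreasing nonnegative sequence.

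For the spectral-norm bound I would invoke monotonicity of the singular values: each of the $r+1$ largest singular values is at least $\sigma_{r+1}$, so $(r+1)\,\sigma_{r+1} \leq \sum_{j=1}^{r+1} \sigma_j \leq \norm{\mat{A}}_\ast$, and dividing by $r+1$ gives $\norm{\mat{A} - \lowrank{\mat{A}}_r} \leq \norm{\mat{A}}_\ast/(r+1)$.

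For the Frobenius-norm bound I would first pull out a factor of $\sigma_{r+1}$, using $\sigma_j \leq \sigma_{r+1}$ for $j > r$: this gives $\sum_{j>r}\sigma_j^2 \leq \sigma_{r+1}\sum_{j>r}\sigma_j$. Next, since $\sigma_{r+1}$ cannot exceed the average of the top $r$ singular values, $\sigma_{r+1} \leq \frac{1}{r}\sum_{j=1}^{r}\sigma_j$. Writing $S_1 = \sum_{j=1}^{r}\sigma_j$ and $S_2 = \sum_{j>r}\sigma_j$, so $S_1 + S_2 = \norm{\mat{A}}_\ast$, the AM--GM inequality $S_1 S_2 \leq (S_1+S_2)^2/4$ yields $\sum_{j>r}\sigma_j^2 \leq \frac{1}{r}S_1 S_2 \leq \norm{\mat{A}}_\ast^2/(4r)$; taking square roots gives the claimed $\norm{\mat{A} - \lowrank{\mat{A}}_r}_{\rm F} \leq \norm{\mat{A}}_\ast/(2\sqrt{r})$.

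There is no genuine obstacle here — the whole argument is routine bookkeeping with the singular values, which is presumably why the authors omit it. The one place deserving a little care is obtaining the sharp constant $1/(2\sqrt{r})$: this requires splitting $\norm{\mat{A}}_\ast$ into the contribution of the top $r$ singular values and the tail, applying the averaging bound on $\sigma_{r+1}$, and closing with AM--GM, rather than settling for the cruder estimate $\sum_{j>r}\sigma_j^2 \leq \bigl(\sum_{j>r}\sigma_j\bigr)^2 \leq \norm{\mat{A}}_\ast^2$.
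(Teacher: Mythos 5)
Your proof is correct and complete. The paper itself omits the proof of this Fact (deferring to Lemma 13 of Meyer et al.\ and Lemma 7 of Gilbert--Strauss--Tropp--Vershynin for essentially the same statement), and your argument — reduce via Eckart--Young--Mirsky to the singular-value sequence, telescope the top $r+1$ singular values for the spectral bound, and combine $\sigma_{r+1} \leq \tfrac{1}{r}\sum_{j\leq r}\sigma_j$ with AM--GM for the Frobenius bound — is exactly the standard route those references take; you are also right that the AM--GM step is what produces the stated sharp constant, since the cruder chain $\sum_{j>r}\sigma_j^2 \leq \sigma_{r+1}\lVert\mat{A}\rVert_\ast \leq \lVert\mat{A}\rVert_\ast^2/(r+1)$ would only give $\lVert\mat{A}-\lowrank{\mat{A}}_r\rVert_{\rm F}\leq \lVert\mat{A}\rVert_\ast/\sqrt{r+1}$, which is strictly weaker than $\lVert\mat{A}\rVert_\ast/(2\sqrt{r})$ for every $r\geq 1$.
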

	
	\subsection{The complete variance bound} \label{sec:main_proof}
	
	To establish the main result, \cref{thm:main}, we begin with the initial variance
	bounds and introduce the results from \cref{lem:bounds,lem:trace_relations}.
	
	\begin{proof}[Proof of \cref{thm:main}]
		We recognize that all the terms in the error formulas in \cref{prop:hpp,prop:x,prop:xn}
		reflect the squared approximation error in a randomized SVD or a randomized Nystr\"om approximation.
		Therefore, we can apply the error bounds in \cref{lem:bounds} to obtain more explicit error representations.
		For \HutchPP, when $r \leq m/3 -2$,
		\begin{equation*}
			\mathbb{E} \,\bigl| \hat{\tr}_{\rm H++} - \tr \mat{A} \bigr|^2
			\leq \frac{2}{m/3} \mathbb{E} \lVert (\Id - \mat{Q} \mat{Q}^\ast) \mat{A} \rVert_{\rm F}^2
			\leq \frac{2}{m/3 - r - 1} \lVert \mat{A} - \lowrank{\mat{A}}_r \rVert_{\rm F}^2.
		\end{equation*}
		For \textsc{XTrace}\xspace, when $r \leq m/2-4$,
		\begin{align*}
			\mathbb{E}\, \bigl| \hat{\tr}_{\rm X} - \tr \mat{A} \bigr|^2
			&\leq \frac{2}{m/2} \mathbb{E} \bigl \lVert \big(\Id-\mat{Q}_{(1)} \mat{Q}_{(1)}^*\big) \mat{A} \rVert^2_{\rm F}
			+ 4 \mathbb{E}\, \bigl\lVert \big(\Id-\mat{Q}_{(12)} \mat{Q}_{(12)}^*\big) \mat{A} \bigr\rVert^2 \\
			&\leq \frac{4m}{m/2 - r - 3} 
			\lVert \mat{A} - \lowrank{\mat{A}}_r \rVert^2
			+ \frac{4\e^2 m}{(m/2 - r - 3)^2} \lVert \mat{A} - \lowrank{\mat{A}}_r \rVert_{\rm F}^2.
		\end{align*}
		Last, for \NysTrace, when $r \leq m - 6$,
		\begin{align*}
			&\bigl(\mathbb{E}\, \bigl| \hat{\tr}_{\rm XN} - \tr \mat{A} \bigr|^2\bigr)^{1 \slash 2} \leq \frac{\sqrt{2}}{\sqrt{m}} \bigl(\mathbb{E} \bigl\lVert \mat{A} - \mat{A}\langle \mat{\Omega}_{-1}\rangle \bigr\rVert_{\rm F}^2\bigr)^{1 / 2} 
			+ \sqrt{2}
			\bigl(\mathbb{E} \bigl\lVert \mat{A} - \mat{A}\langle \mat{\Omega}_{-12}\rangle \bigr\rVert^2\bigr)^{1/2} \\
			&\leq \frac{\sqrt{8} m}{m - r - 5}
			\lVert \mat{A} - \lowrank{\mat{A}}_r \rVert
			\!+\! \frac{\sqrt{2} m}{(m - r - 5)^{3/2}}
			\lVert \mat{A} - \lowrank{\mat{A}}_r \rVert_{\rm F} 
			\!+\! \frac{5 \e^2 m}{(m - r - 5)^2} 
			\lVert \mat{A} - \lowrank{\mat{A}}_r \rVert_{\ast}.
		\end{align*}
		Thus, we confirm the detailed error bounds in \cref{thm:main}.
		
		All that remains is to show that each trace estimator $\hat{\tr}$ satisfies
		\begin{equation}
			\label{eq:to_verify}
			\bigl(\mathbb{E}| \hat{\tr} - \tr \mat{A} |^2\bigr)^{1/2}
			\leq \frac{C}{m} \lVert \mat{A} \rVert_\ast,
		\end{equation}
		for an absolute constant $C$. 
		To that end, apply \cref{lem:trace_relations} to bound $\lVert \mat{A} - \lowrank{\mat{A}}_r \rVert$ and $\lVert \mat{A} - \lowrank{\mat{A}}_r \rVert_{\rm F}$ in terms of $\lVert \mat{A} - \lowrank{\mat{A}}_r \rVert_{\ast}$.
		For \HutchPP, we set $r = \lfloor m / 6 \rfloor - 1$.  For \textsc{XTrace}\xspace, we set $r = \lfloor m / 4 \rfloor - 2$. For \NysTrace, we set $r = \lfloor m / 2 \rfloor - 3$.  Simplifying yields \cref{eq:to_verify} for each estimator. 
	\end{proof}
	
	\subsection{\texorpdfstring{Proof of \cref{prop:estim_bound}}{Proof of Proposition 3.1}}
	\label{sec:estim_bound}
	
	Last, we must argue that the posterior error estimator $\hat{\err}$ defined in~\cref{eq:error_est}
	reflects the actual error.  We instate the notation from \cref{prop:estim_bound}.
	
	For both \textsc{XTrace}\xspace and \NysTrace, each individual trace estimate $\hat{\tr}_i$ is unbiased.
	As a consequence, the variance takes the form
	\begin{equation*}
		\expect \mleft|  \hat{\tr} - \tr\mat{A} \mright|^2 = \Var(\hat{\tr}) = \frac{1}{\ell^2} \sum_{i,j=1}^\ell \Cov\mleft(\hat{\tr}_i,\hat{\tr}_j\mright).
	\end{equation*}
	A short calculation yields
	\begin{equation*}
		\expect \hat{\err}^2 = \frac{1}{\ell^2(\ell-1)} \sum_{i,j=1}^\ell\mleft[ \Var\mleft(\hat{\tr}_j\mright) - \Cov\mleft(\hat{\tr}_i,\hat{\tr}_j\mright)\mright].
	\end{equation*}
	Since the samples $\vec{\omega}_1,\ldots,\vec{\omega}_\ell$ are exchangeable, the variance is the same for each $j$ and the covariance is the same for all $i \ne j$.  Therefore,
	\begin{align*}
		\expect \mleft| \hat{\tr} - \tr(\mat{A}) \mright|^2 &= \frac{1}{\ell} \Var\mleft(\hat{\tr}_1\mright) + \frac{\ell-1}{\ell} \Cov\mleft(\hat{\tr}_1,\hat{\tr}_2\mright), \\
		\expect \hat{\err}^2 &=  \frac{1}{\ell} \Var\mleft(\hat{\tr}_1\mright) - \frac{1}{\ell} \Cov\mleft(\hat{\tr}_1,\hat{\tr}_2\mright).
	\end{align*}
	The result follows when we take the ratio of these two quantities and simplify. \hfill $\proofbox$
	
	As a final comment, we observe that the calculations in \cref{sec:general_error} show for \emph{symmetric matrices} that the \textsc{XTrace}\xspace correlations are bounded by
	\begin{equation*}
		\operatorname{Cor}(\hat{\tr}_1, \hat{\tr}_2) \leq 2 \frac{\expect \big\|(\Id - \mat{Q}_{(12)}^{\vphantom{*}}\mat{Q}_{(12)}^*)\mat{A}(\Id - \mat{Q}_{(12)}^{\vphantom{*}}\mat{Q}_{(12)}^*)\big\|^2}{\expect \big\|(\Id - \mat{Q}_{(1)}^{\vphantom{*}}\mat{Q}_{(1)}^*)\mat{A}(\Id - \mat{Q}_{(1)}^{\vphantom{*}}\mat{Q}_{(1)}^*)\big\|_{\rm F}^2},
	\end{equation*}
	where $\mat{Q}_{(1)}$ and $\mat{Q}_{(12)}$ are defined in \cref{prop:x}.
	These correlations are small for matrices with slow rates of singular value decay, i.e., when $\|\mat{A} - \lowrank{\mat{A}}_{m/2 - 1}\|_{\rm F} \gg \|\mat{A} - \lowrank{\mat{A}}_{m/2 - 2}\|$.
	In practice, we observe the correlations to be small even for matrices with singular values which decay more quickly.
	As an example, for the matrix with exponentially decaying eigenvalues in \cref{fig:exp_standout}, the \textsc{XTrace}\xspace correlations (measured over $10^4$ independent runs of the algorithm) are no higher than $0.06$ and the average error estimate is correct up to a factor of $1.2$.
	
	\section*{Acknowledgments}
	We thank Eitan Levin for helpful discussions regarding the fast implementation of \textsc{XTrace}\xspace.
	
	\section*{Disclaimer}
	This report was prepared as an account of work sponsored by an agency of the United States Government. Neither the United States Government nor any agency thereof, nor any of their employees, makes any warranty, express or implied, or assumes any legal liability or responsibility for the accuracy, completeness, or usefulness of any information, apparatus, product, or process disclosed, or represents that its use would not infringe privately owned rights. Reference herein to any specific commercial product, process, or service by trade name, trademark, manufacturer, or otherwise does not necessarily constitute or imply its endorsement, recommendation, or favoring by the United States Government or any agency thereof. The views and opinions of authors expressed herein do not necessarily state or reflect those of the United States Government or any agency thereof.
	
	\bibliographystyle{siamplain}
	\bibliography{refs}
	
	\newpage
	\begin{center} \textbf{SUPPLEMENTARY MATERIAL}
	\end{center}
	
	\setcounter{section}{0} \renewcommand{\thesection}{SM\arabic{section}}
	\setcounter{figure}{0} \renewcommand{\thefigure}{SM\arabic{figure}}
	\renewcommand\appendixname{Section}
	
	\section{Comparison with Adaptive \HutchPP} \label{sec:compare_adapt}
	
	The adaptive \HutchPP algorithm of \cite{PCK22} flexibly apportions test vectors between low-rank approximation and residual trace estimation, based on the estimated singular values of the input matrix.
	This algorithm interpolates between the Girard--Hutchinson estimator, the \HutchPP estimator, and a purely low-rank approximation-based trace estimate.
	This algorithm also estimates the total number of matvecs $m$ to meet an (absolute) error tolerance:
	\begin{equation*}
		\mleft| \hat{\tr} - \tr(\mat{A}) \mright| \le \varepsilon_{\rm abs} \quad \text{except with probability $1-2\delta$},
	\end{equation*}
	where $\delta$ is a parameter chosen by the user.
	
	We compare \textsc{XTrace}\xspace with adaptive \HutchPP in \cref{fig:adapt}.
	To produce these plots, we first run adaptive \HutchPP with error tolerances $\varepsilon_{\rm abs} = \varepsilon \cdot \tr \mat{A}$ and failure probability parameter $\delta \coloneqq 1/10$.
	For each value of the error tolerance $\epsilon$, we plot the mean error (solid line) and desired accuracy $\varepsilon$ against the mean number $m$ of matvecs required by the algorithm.
	We then run \textsc{XTrace}\xspace with a variable number of matvecs $m$ and report the mean error over $1000$ trials.
	Overall, we find that \textsc{XTrace}\xspace performs similarly to adaptive \HutchPP or better than adaptive \HutchPP by up to an order of magnitude.
	
	\begin{figure}[t]
		\centering
		\begin{subfigure}[b]{0.48\textwidth}
			\centering
			\includegraphics[width=\textwidth]{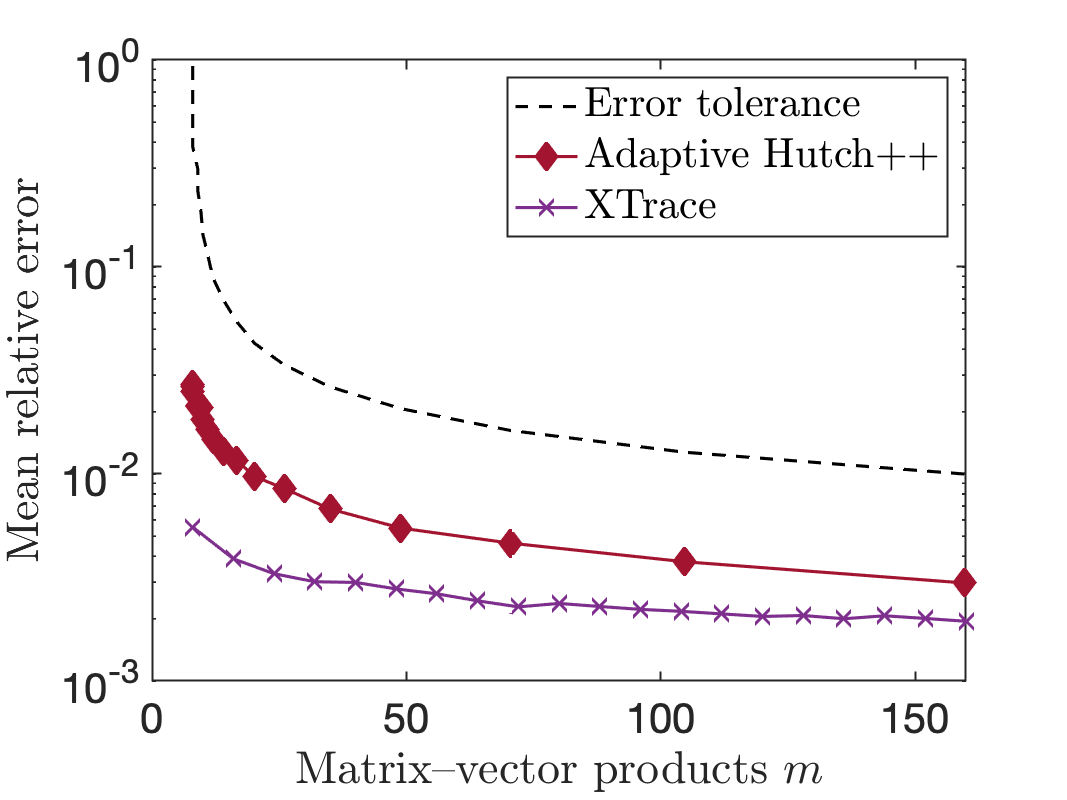}
			\caption{\texttt{flat}} \label{fig:flat_adapt}
		\end{subfigure}
		~
		\begin{subfigure}[b]{0.48\textwidth}
			\centering
			\includegraphics[width=\textwidth]{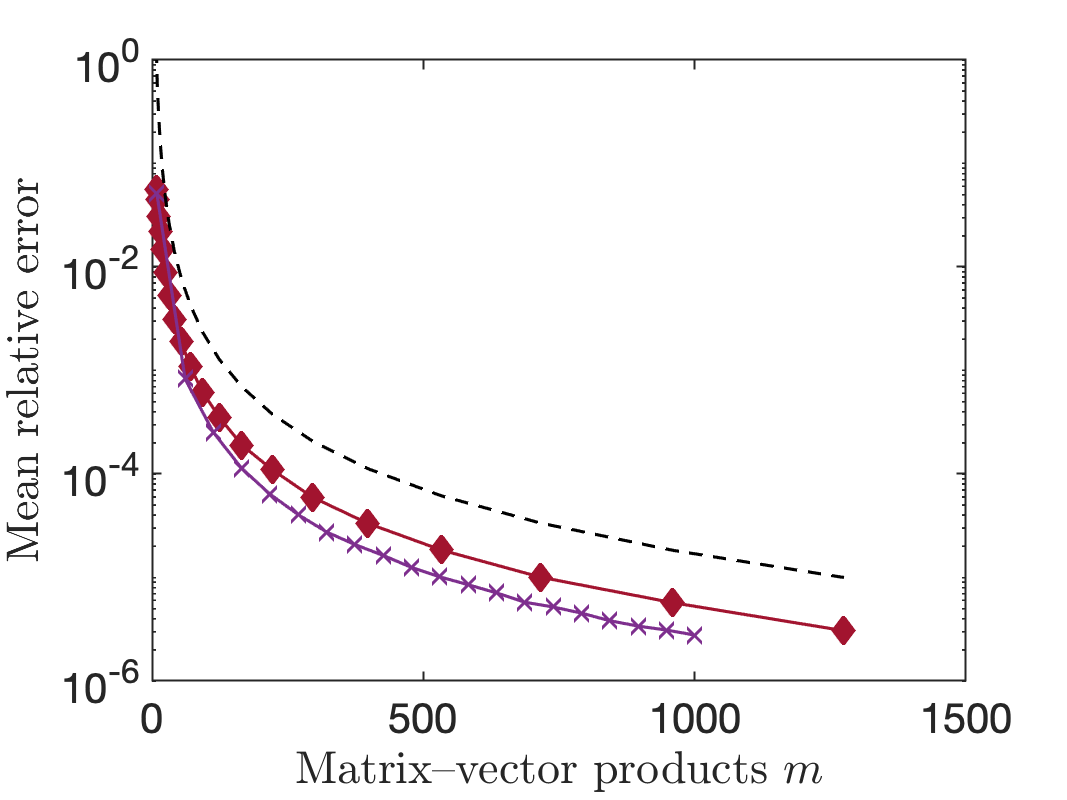}
			\caption{\texttt{poly}} \label{fig:poly_adapt}
		\end{subfigure}
		
		\begin{subfigure}[b]{0.48\textwidth}
			\centering
			\includegraphics[width=\textwidth]{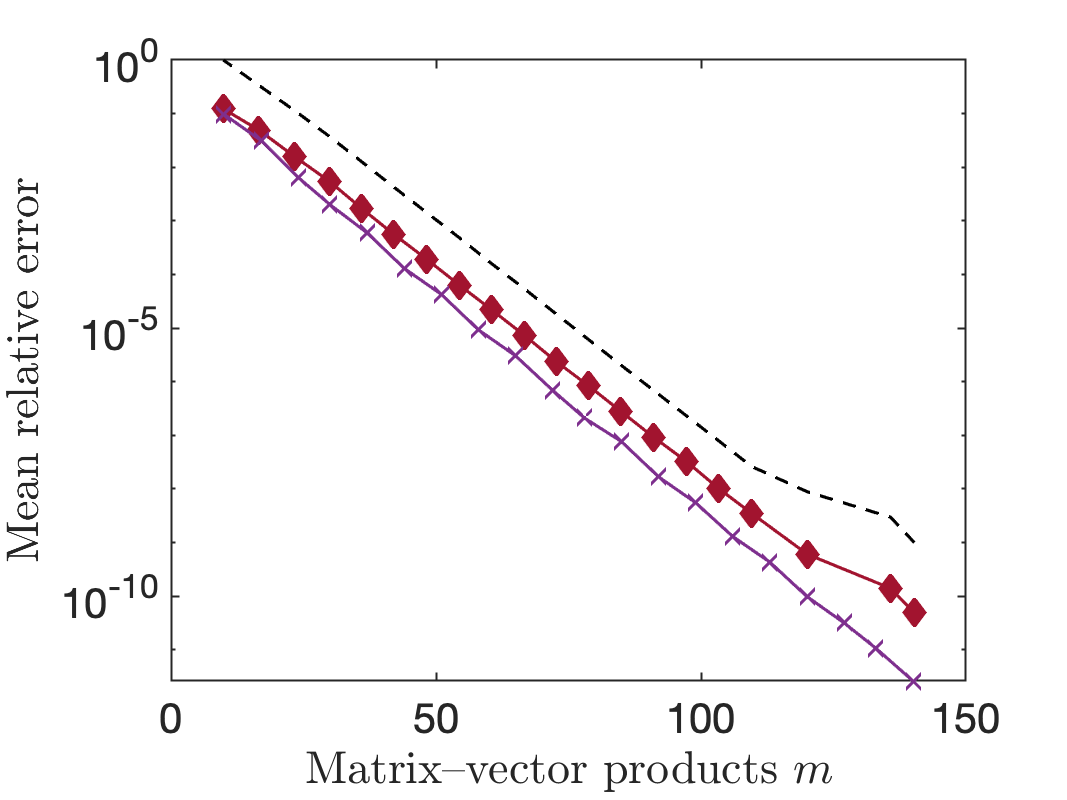}
			\caption{\texttt{exp}} \label{fig:exp_adapt}
		\end{subfigure}
		\begin{subfigure}[b]{0.48\textwidth}
			\centering
			\includegraphics[width=\textwidth]{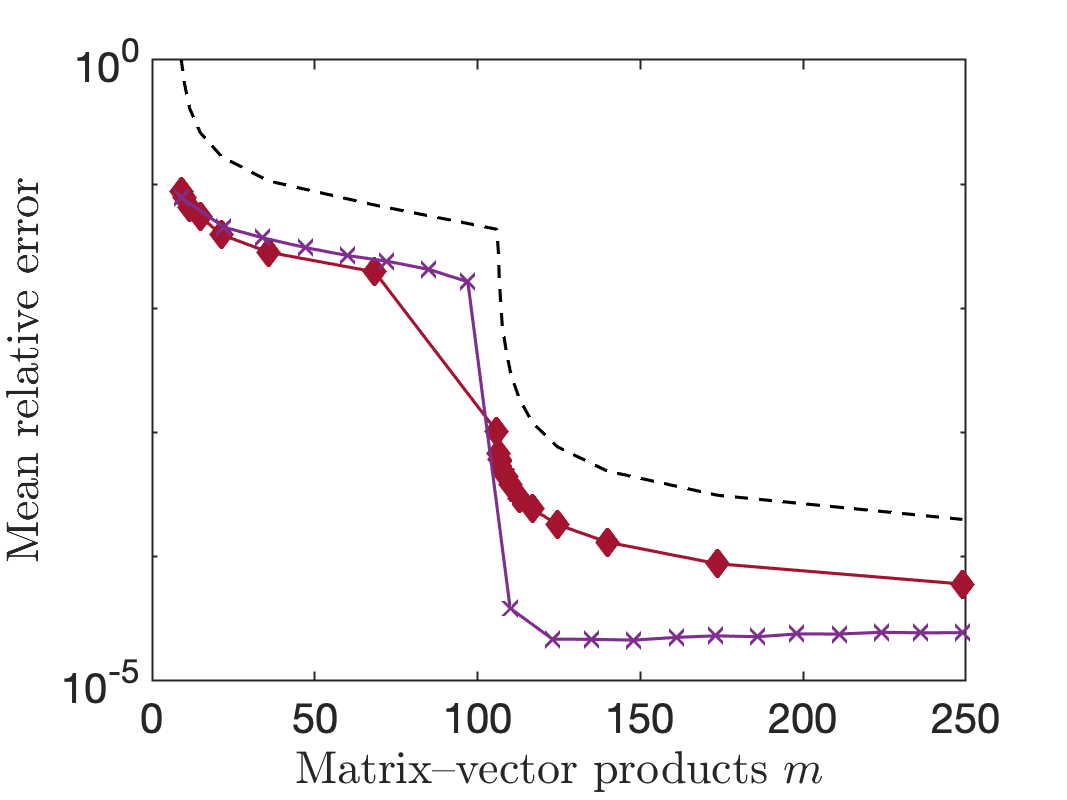}
			\caption{\texttt{step}} \label{fig:step_adapt}
		\end{subfigure}
		\caption{\textbf{Comparison with adaptive \HutchPP.}  Average relative error using \textsc{XTrace}\xspace and adaptive \HutchPP (solid lines) and user-chosen error tolerance for Adaptive \HutchPP (dashed lines). See~\cref{sec:compare_adapt}.} \label{fig:adapt}
	\end{figure}
	
	\section{Runtime comparison} \label{sec:time}
	
	Throughout the main body of the text, we use the number of matrix--vector products as a surrogate for the runtime of different trace estimators, which is justified by the observation that matvecs frequently dominate the cost of trace estimation.
	For instance, for the quantum statistical physics examples in \cref{sec:intro-synth-exper,sec:quantum}, matvecs are responsible for over 90\% of the runtime for all of the trace estimators.
	However, when matvec operations are fairly cheap or the number of matvecs is large, the cost of \emph{processing} the matvecs can come to dominate the runtime.
	
	To illustrate the runtime differences between the algorithms, let us compare the Girard--Hutchinson estimator, \HutchPP, and \textsc{XTrace}\xspace.
	The only processing cost of the Girard--Hutchinson algorithm comes from the computation of the inner products $\vec{\omega}_i^*(\mat{A}\vec{\omega}_i)$ between the matvecs $\mat{A}\vec{\omega}_i$ and the test vectors $\vec{\omega}_i$ for $i = 1,\ldots,m$, requiring just $\order(mN)$ operations.
	By contrast, \HutchPP orthogonalizes a matrix of size $N \times (m/3)$, resulting in a higher processing cost of $\order(m^2N)$ operations.
	For a fixed budget of $m$ matvecs, \textsc{XTrace}\xspace has the highest processing cost, since the algorithm orthogonalizes a larger $N\times (m/2)$ matrix.
	Yet, \textsc{XTrace}\xspace often compensates for its high processing cost because it results in lower error than the other algorithms.
	
	\begin{figure}[ht!]
		\centering
		\texttt{flat}
		
		\begin{subfigure}[b]{0.39\textwidth}
			\centering
			\includegraphics[width=\textwidth]{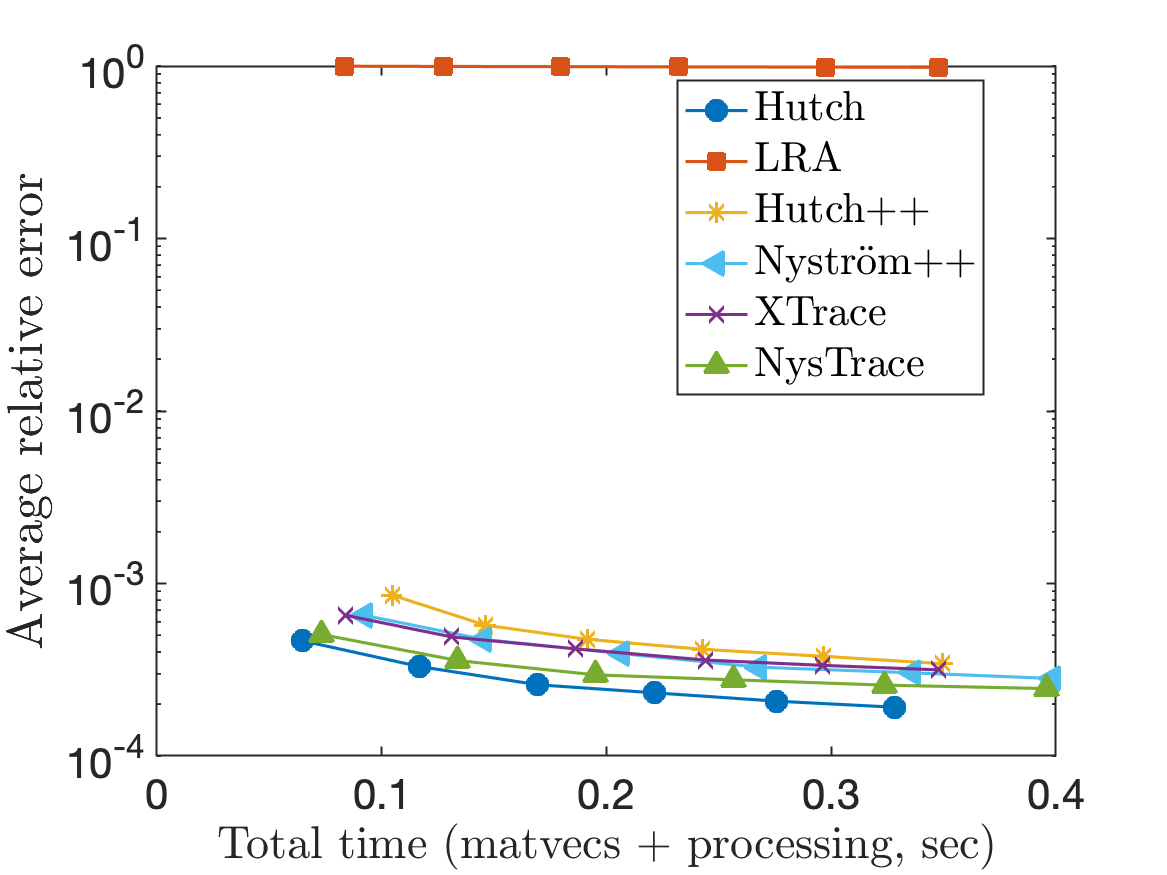} \label{fig:flat_time}
		\end{subfigure}
		~
		\begin{subfigure}[b]{0.39\textwidth}
			\centering
			\includegraphics[width=\textwidth]{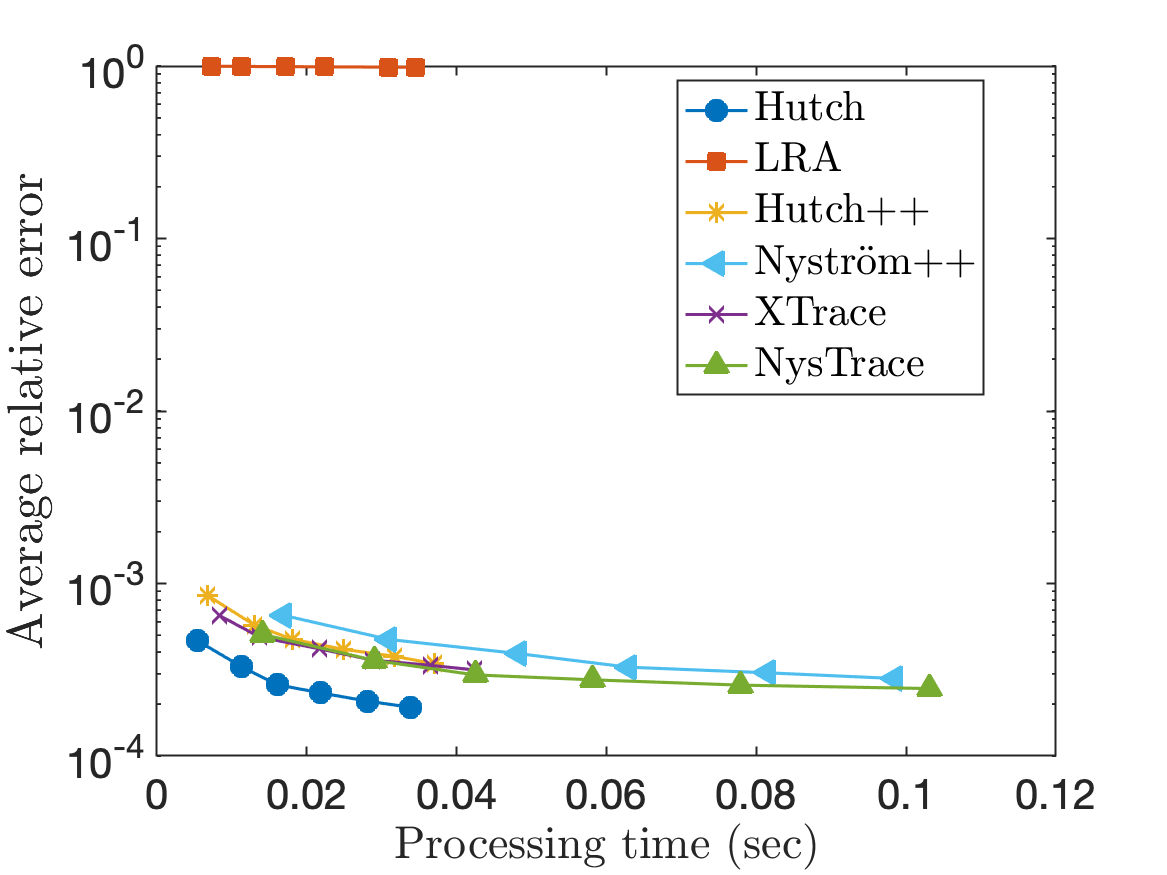} \label{fig:flat_prep}
		\end{subfigure}
		
		\texttt{poly}
		
		\begin{subfigure}[b]{0.39\textwidth}
			\centering
			\includegraphics[width=\textwidth]{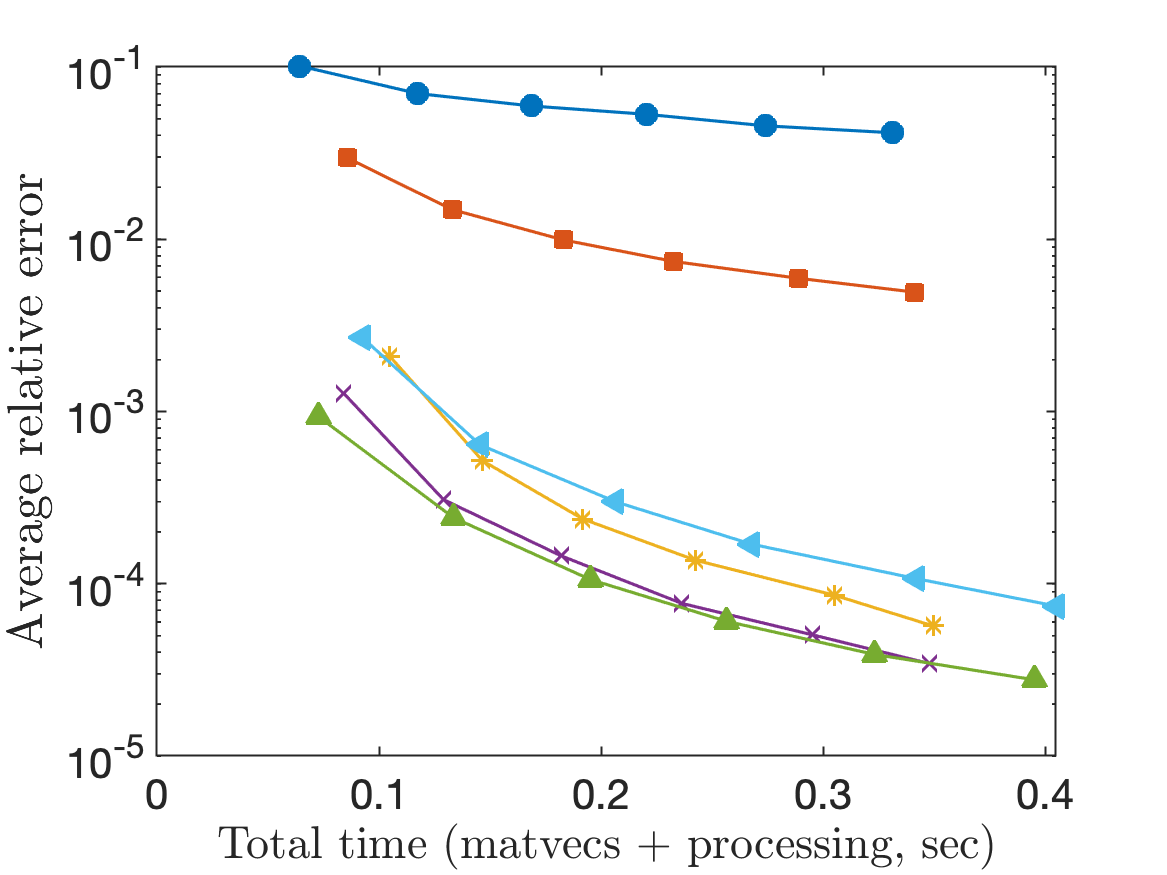}
			\label{fig:poly_time}
		\end{subfigure}
		\begin{subfigure}[b]{0.39\textwidth}
			\centering
			\includegraphics[width=\textwidth]{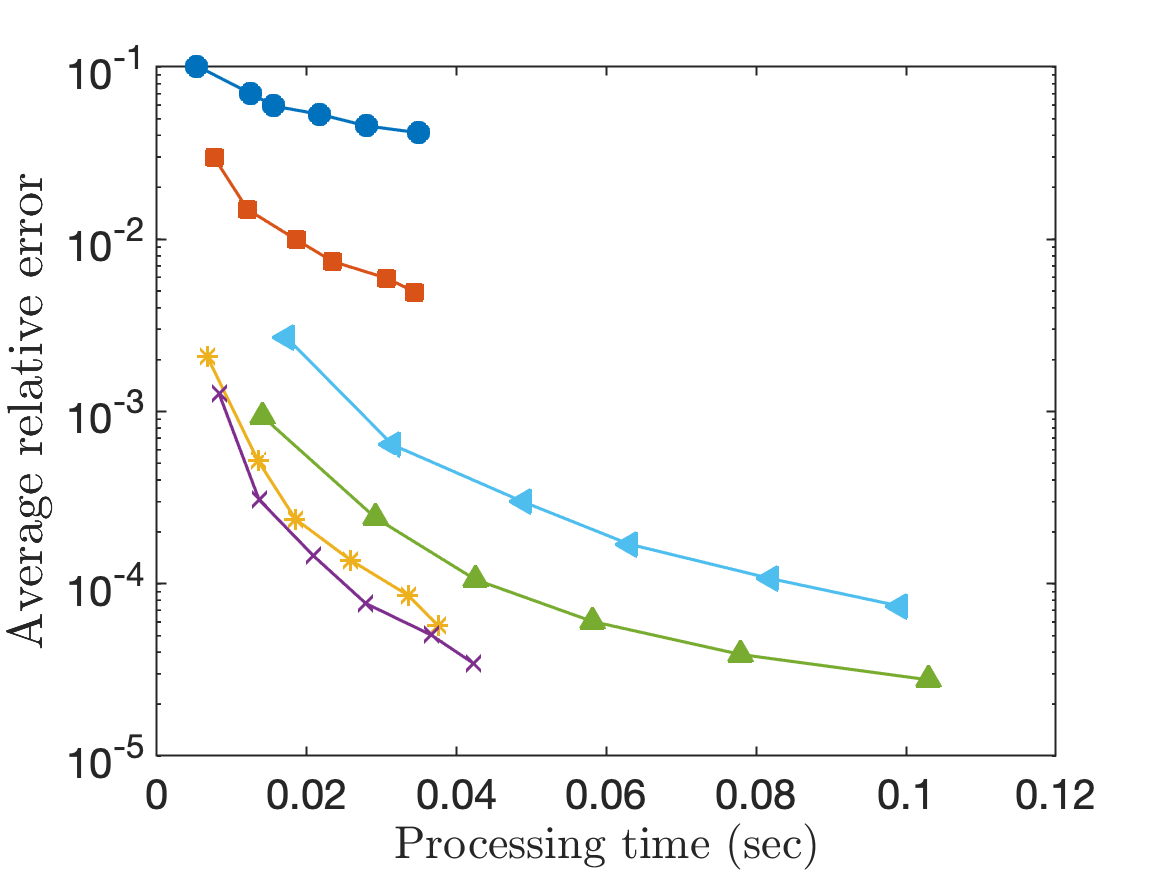} \label{fig:poly_prep}
		\end{subfigure}
		
		\texttt{exp}
		
		\begin{subfigure}[b]{0.39\textwidth}
			\centering
			\includegraphics[width=\textwidth]{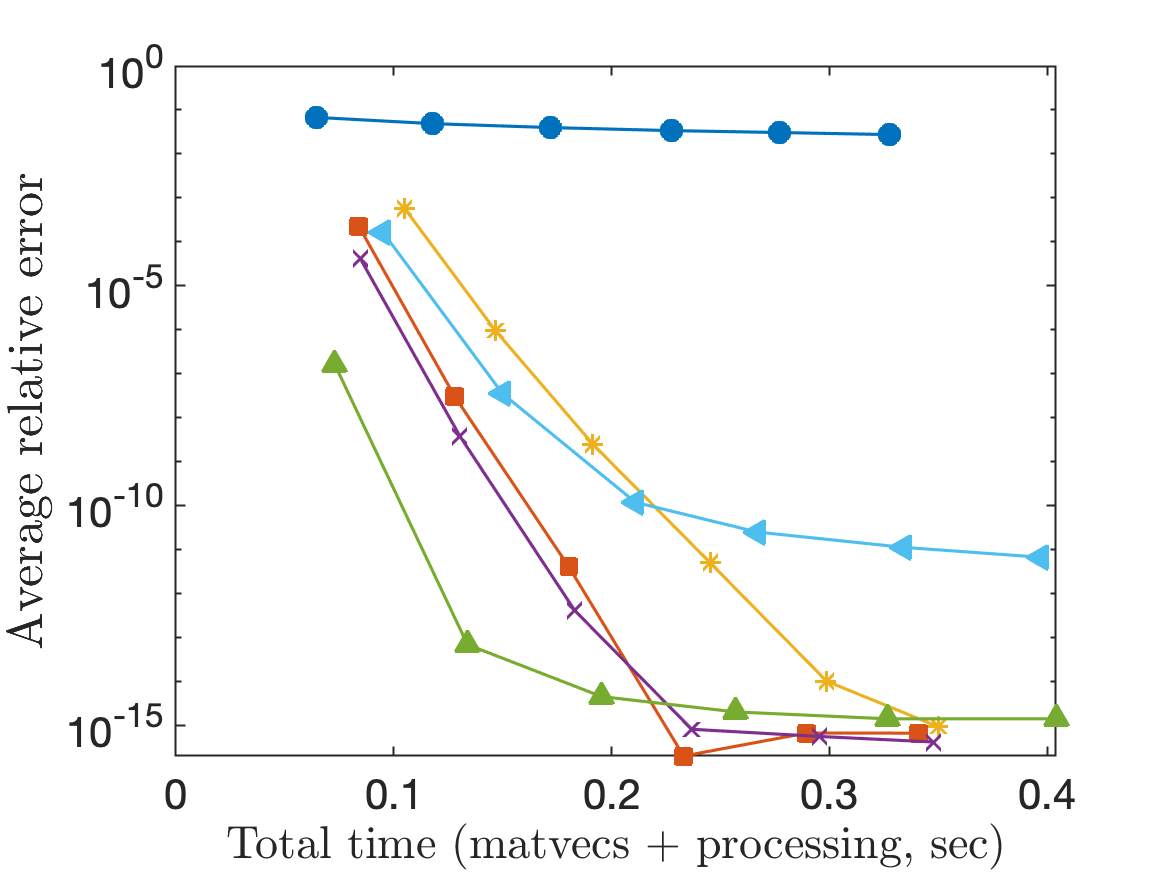}
			\label{fig:fastexp_time}
		\end{subfigure}
		\begin{subfigure}[b]{0.39\textwidth}
			\centering
			\includegraphics[width=\textwidth]{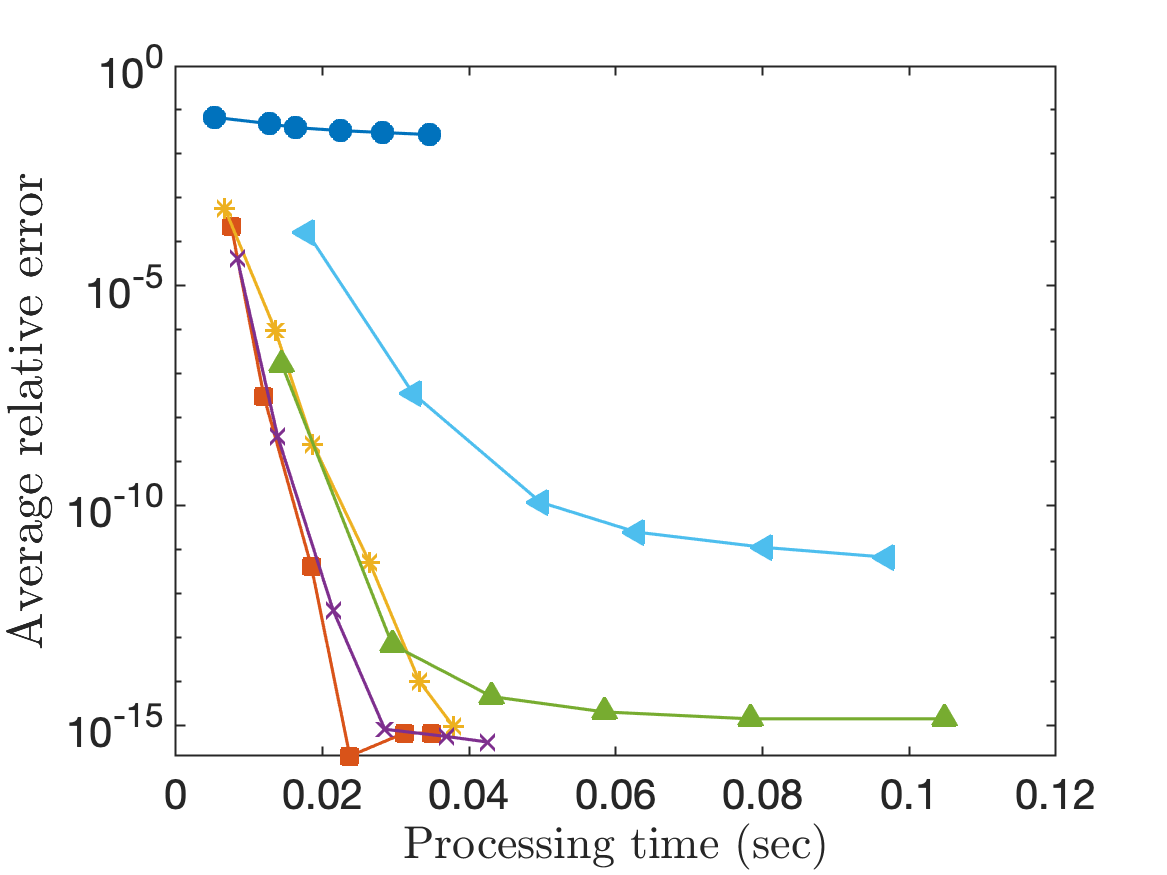} \label{fig:fastexp_prep}
		\end{subfigure}
		
		\texttt{step}
		
		\begin{subfigure}[b]{0.39\textwidth}
			\centering
			\includegraphics[width=\textwidth]{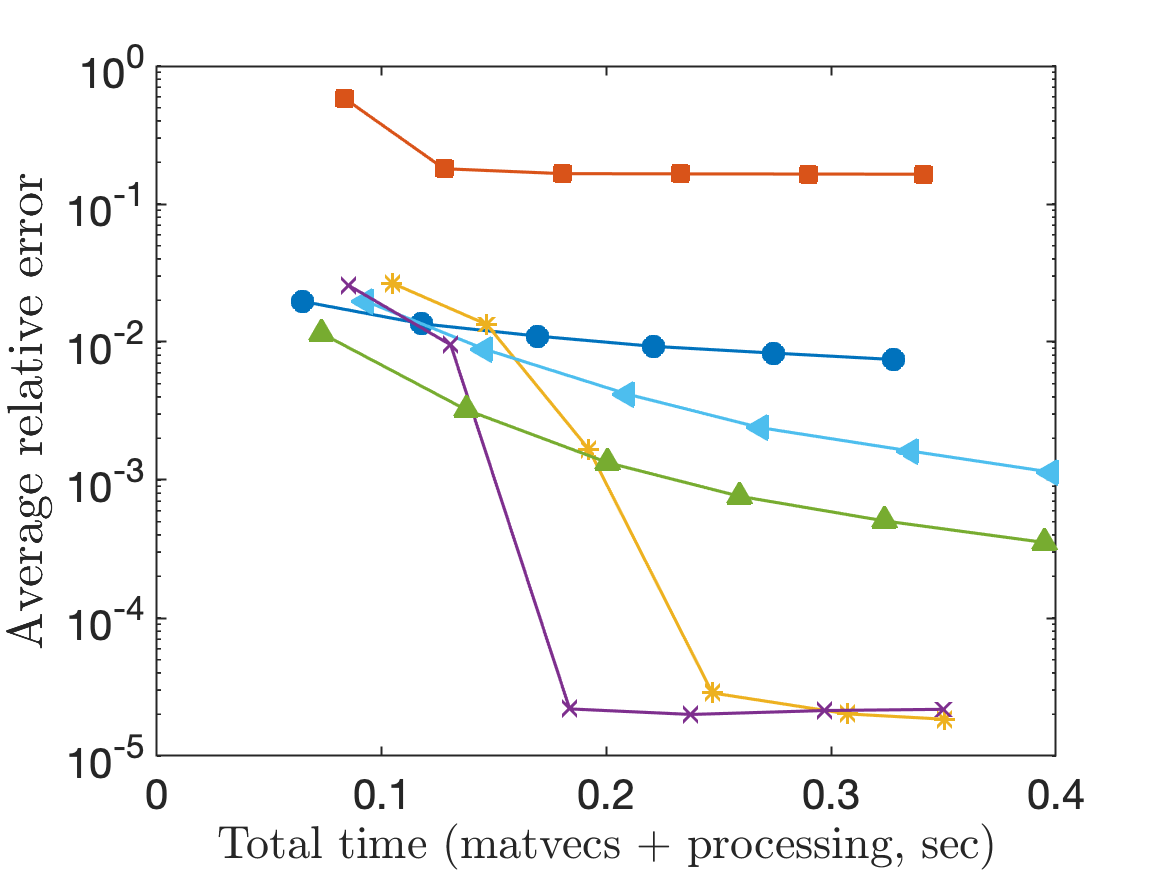}
			\label{fig:smallstep_time}
		\end{subfigure}
		\begin{subfigure}[b]{0.39\textwidth}
			\centering
			\includegraphics[width=\textwidth]{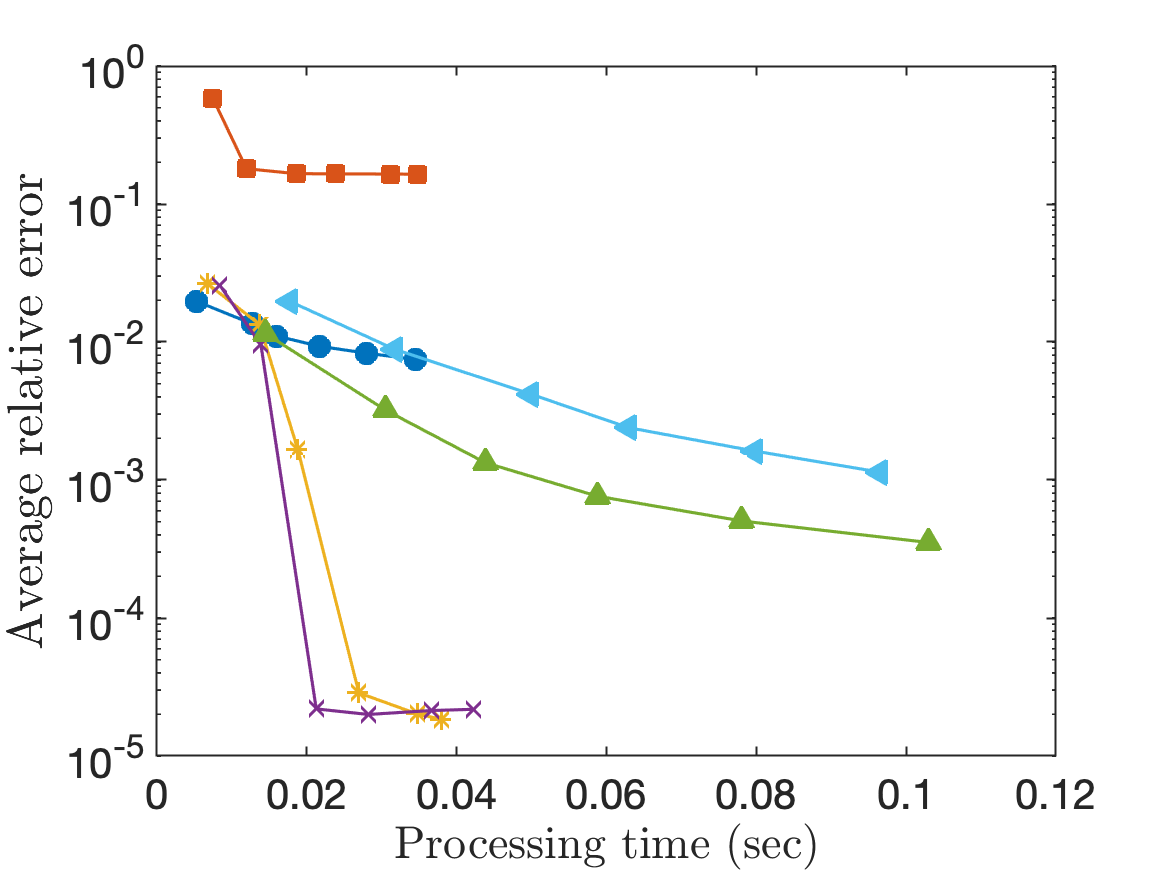} \label{fig:smallstep_prep}
		\end{subfigure}
		\caption{\textbf{Runtime.} Average relative error versus runtime (\emph{left}) and processing time (runtime minus matvecs, \emph{right}); see \cref{sec:time}.} \label{fig:time}
	\end{figure}
	
	\Cref{fig:time} shows the error versus the runtime for different stochastic trace estimators applied to matrices with several spectral decay patterns.
	We use the same spectral profiles as in \cref{sec:comparisons}, but use a larger size $N = 10^4$.
	In the left panels, we show the error as a function of the total runtime, whereas the right panels show the processing time (total runtime minus the time required to perform the matvecs).
	Runtimes and errors are computed by averaging over 1000 independent trials. 
	
	In these experiments, \textsc{XTrace}\xspace consistently achieves lower errors than \HutchPP for a given amount of runtime or processing time, despite the additional costs associated with orthogonalizing a larger matrix.
	This shows that \textsc{XTrace}\xspace can be a more effective trace estimator than \HutchPP even if processing costs begin to dominate the runtime.
	The processing cost of the Nystr\"om-based trace estimators \NysTrace and \NysPP is significantly higher than \HutchPP and \textsc{XTrace}\xspace.
	This suggests that when matvecs are expensive, \NysTrace should be preferred over \textsc{XTrace}\xspace and when matvecs are cheap, it may be worth using \textsc{XTrace}\xspace to avoid \NysTrace's higher processing costs.
	
	\section{Derivation for efficient \textsc{XTrace}\xspace implementation}
	\label{sec:xtrace_derivation}
	
	In the main text, we proved the update formula
	\begin{equation} \label{eq:update_formula}
		\mat{Q}_{(i)}^{\vphantom{*}}\mat{Q}_{(i)}^* = \mat{Q} (\Id - \vec{s}_{i}^{\vphantom{*}}\vec{s}_i^*)\mat{Q}^*
	\end{equation}
	for the the $\mat{Q}_{(i)}$ matrices appearing in the \textsc{XTrace}\xspace estimator and devised an $\order(m^3)$ procedure for computing $\mat{S} = \begin{bmatrix} \vec{s}_1 & \cdots \vec{s}_{m/2}\end{bmatrix}$. 
	In this section, we complete the derivation of how to compute the basic trace estimators $\hat{\tr}_i$ in $\order(m^2N)$ operations.
	
	Instate the notation of \cref{sec:xtrace}.
	The basic trace estimator $\hat{\tr}_i$ is defined as
	\begin{equation*}
		\hat{\tr}_i = \tr \mleft( \mat{Q}_{(i)}^* \mleft( \mat{A}\mat{Q}^{\vphantom{*}}_{(i)}\mright) \mright) + \vec{\omega}_i^* \mleft( \Id - \mat{Q}_{(i)}^{\vphantom{*}}\mat{Q}_{(i)}^*\mright) \mat{A} \mleft( \Id - \mat{Q}_{(i)}^{\vphantom{*}}\mat{Q}_{(i)}^*\mright)\vec{\omega}_i^{\vphantom{*}}
	\end{equation*}
	Now invoke \cref{eq:update_formula}:
	\begin{equation*}
		\hat{\tr}_i = \tr \mleft( \mat{A}\mat{Q} (\Id - \vec{s}_{i}^{\vphantom{*}}\vec{s}_i^*)\mat{Q}^*  \mright) + \vec{\omega}_i^* \mleft( \Id - \mat{Q} (\Id - \vec{s}_{i}^{\vphantom{*}}\vec{s}_i^*)\mat{Q}^*\mright) \mat{A} \mleft( \Id - \mat{Q} (\Id - \vec{s}_{i}^{\vphantom{*}}\vec{s}_i^*)\mat{Q}^*\mright)\vec{\omega}_i^{\vphantom{*}}.
	\end{equation*}
	Define
	\begin{equation} \label{eq:auxilliary_matrices}
		\mat{Z} \coloneqq \mat{A}\mat{Q}, \quad \mat{H} \coloneqq \mat{Q}^*\mat{Z}, \quad \mat{W} \coloneqq \mat{Q}^* \mat{\Omega}, \quad \mat{T} \coloneqq \mat{Z}^*\mat{\Omega},
	\end{equation}
	all of which can be computed using $m/2$ matvecs and $\order(m^2N)$ operations.
	Denote the columns of a matrix using the corresponding lowercase letter, e.g., the $i$th column of $\mat{W}$ is $\vec{w}_i$.
	Using the newly introduced variables, the trace estimates become
	\begin{align*}
		\hat{\tr}_i &= \tr (\mat{H}) - \vec{s}_i^*\mat{H}\vec{s}_i^{\vphantom{*}} + \vec{\omega}_i^* (\Id - \mat{Q}\mat{Q}^*)\mat{A}\vec{\omega}_i + \vec{w}_i^*\vec{s}_i^{\vphantom{*}} \vec{s}_i^*\mat{Q}^*\mat{A}\vec{\omega}_i^{\vphantom{*}} \\ 
		&\qquad- \mleft( \vec{\omega}_i^* - \vec{w}_i^* (\Id - \vec{s}_{i}^{\vphantom{*}}\vec{s}_i^*)\mat{Q}^*\mright) \mat{Z} (\Id - \vec{s}_{i}^{\vphantom{*}}\vec{s}_i^*)\vec{w}_i^{\vphantom{*}}.
	\end{align*}
	To simplify this, we make two observations.
	First, $\mat{Q}\mat{Q}^*$ is an orthoprojector onto the range of $\mat{A}\mat{\Omega}$, so $(\Id - \mat{Q}\mat{Q}^*)\mat{A}\vec{\omega}_i = \vec{0}$.
	Second, recall that we defined and factorized $\mat{Y} = \mat{A}\mat{\Omega} = \mat{Q}\mat{R}$.
	Thus, $\mat{Q}^*\mat{A}\vec{\omega}_i = \mat{Q}^*\vec{y}_i = \vec{r}_i$.
	Adding these simplifications, we have
	\begin{align*}
		\hat{\tr}_i &= \tr (\mat{H}) - \vec{s}_i^*\mat{H}\vec{s}_i^{\vphantom{*}} + \vec{w}_i^*\vec{s}_i^{\vphantom{*}}\cdot \vec{s}_i^*\vec{r}_i^{\vphantom{*}} - \mleft( \vec{\omega}_i^* - \vec{w}_i^* (\Id - \vec{s}_{i}^{\vphantom{*}}\vec{s}_i^*)\mat{Q}^*\mright) \mat{Z} (\Id - \vec{s}_{i}^{\vphantom{*}}\vec{s}_i^*)\vec{w}_i^{\vphantom{*}}.
	\end{align*}
	Now, define
	\begin{equation*}
		\vec{x}_i^{\vphantom{*}} \coloneqq (\Id - \vec{s}_i^{\vphantom{*}}\vec{s}_i^*)\vec{w}_i^{\vphantom{*}} = \vec{w}_i^{\vphantom{*}} - \vec{s}_i^*\vec{w}_i^{\vphantom{*}} \cdot \vec{s}_i \quad \text{for $i=1,2,\ldots,m/2$}.
	\end{equation*}
	Using the new $\vec{x}_i$ notation and the definitions \cref{eq:auxilliary_matrices}, we conclude
	\begin{equation} \label{eq:simplified_formula}
		\hat{\tr}_i = \tr (\mat{H}) - \vec{s}_i^*\mat{H}\vec{s}_i^{\vphantom{*}} + \vec{w}_i^*\vec{s}_i^{\vphantom{*}}\cdot  \vec{s}_i^*\vec{r}_i^{\vphantom{*}} - \vec{t}_i^*\vec{x}_i^{\vphantom{*}} + \vec{x}_i^*\mat{H}\vec{x}_i^{\vphantom{*}}.
	\end{equation}
	
	The update formula \cref{eq:simplified_formula} is used to implement \textsc{XTrace}\xspace in $\order(m^2N)$ operations in \cref{alg:xtrace_efficient}.
	\Cref{alg:xtrace_efficient} is essentially identical to the MATLAB implementation provided in \cref{list:xtrace}, except that \cref{list:xtrace} uses normalized spherically symmetric test vectors (see \cref{sec:distribution}) and uses vectorized operations rather than a for loop to compute the basic estimators $\hat{\tr}_i$.
	
	\begin{algorithm}[t]
		\caption{\textsc{XTrace}\xspace: Efficient implementation \label{alg:xtrace_efficient}}
		\begin{algorithmic}[1]
			\Require Matrix $\mat{A} \in \real^{N \times N}$ and number $m$ of matvecs, where $m$ is even
			\Ensure Trace estimate $\hat{\tr} \approx \tr \mat{A}$ and error estimate $\hat{\err} \approx \vert\hat{\tr} - \tr \mat{A}\vert$
			\State Draw $\mat{\Omega}\sim \Unif\{\pm 1\}^{N\times m/2}$
			\State $\mat{Y} \leftarrow \mat{A}\mat{\Omega}$ \Comment{$m/2$ matvecs}
			\State $(\mat{Q},\mat{R}) \leftarrow \mathtt{qr}(\mat{Y},\texttt{'econ'})$ \Comment{$\order(m^2N)$ operations}
			\State $\mat{Z} \leftarrow \mat{A}\mat{Q}$ \Comment{$m/2$ matvecs}
			\State $\mat{H} \leftarrow \mat{Q}^*\mat{Z}$, $\mat{W} \leftarrow \mat{Q}^*\mat{\Omega}$, $\mat{T} \leftarrow \mat{Z}^*\mat{\Omega}$ 
			\Comment{$\order(m^2N)$ operations}
			\State $\mat{S} \leftarrow (\mat{R}^*)^{-1}$ \Comment{$\order(m^3)$ operations}
			\State $\mat{S} \leftarrow \mat{S} \cdot \diag(\norm{\vec{s}_i}^{-1} : i=1,\ldots,m/2)$ \Comment{$\order(m^2)$ operations}
			\For{$i=1,\ldots,m/2$}
			\State $\vec{x}_i \leftarrow \vec{w}_i - \vec{s}_i^*\vec{w}_i \cdot \vec{s}_i$ \Comment{$\order(m)$ operations}
			\State $\hat{\tr}_i \leftarrow \tr (\mat{H}) - \vec{s}_i^*(\mat{H}\vec{s}_i^{\vphantom{*}}) + \vec{w}_i^*\vec{s}_i^{\vphantom{*}}\cdot  \vec{s}_i^*\vec{r}_i^{\vphantom{*}} - \vec{t}_i^*\vec{x}_i^{\vphantom{*}} + \vec{x}_i^*(\mat{H}\vec{x}_i^{\vphantom{*}})$ \Comment{$\order(m^2)$ operations}
			\EndFor
			\State $\hat{\tr} \leftarrow (m/2)^{-1} \sum_{i=1}^m \hat{\tr}_i$
			\State $\hat{\err}^2 \gets ((m/2)(m/2 - 1))^{-1} \sum_{i=1}^{m/2} (\hat{\tr}_i - \hat{\tr})^2$
		\end{algorithmic}
	\end{algorithm}
	
	The derivation of an $\order(m^2N)$ implementation for \NysTrace (shown in \cref{list:xnystrace}) is similar in spirit to the one just provided for \textsc{XTrace}\xspace and is omitted.
	
	\section{MATLAB implementations} \label{sec:implementation}
	
	We present optimized MATLAB R2022b implementations of \textsc{XTrace}\xspace, \NysTrace,\! and \XDiag in \cref{list:xtrace,list:xnystrace,list:xdiag}.
	
	\lstinputlisting[float=t,frame=single, caption={MATLAB 2022b implementation for \textsc{XTrace}\xspace},label={list:xtrace}]{code/xtrace.m}
	
	\lstinputlisting[float=t,frame=single, caption={MATLAB 2022b implementation for \NysTrace},label={list:xnystrace}]{code/xnystrace.m}
	
	\lstinputlisting[float=t,frame=single, caption={MATLAB 2022b implementation for \XDiag},label={list:xdiag}]{code/xdiag.m}
	
\end{document}